\newtheorem{theorem}{Theorem}[section]
\newtheorem{definition}[theorem]{Definition}
\numberwithin{equation}{section}
\newtheorem{lemma}[theorem]{Lemma}
\newtheorem{proposition}[theorem]{Proposition}
\newtheorem{corollary}[theorem]{Corollary}
\newtheorem{remark}[theorem]{Remark}
\newtheorem{claim}{Claim}[section]
\numberwithin{equation}{section}
\def\Z{\mathbb{Z}}
\def\R{\mathbb{R}}
\def\bP{\mathbb{P}}
\renewcommand{\phi}{\varphi}
\renewcommand{\epsilon}{\varepsilon}
\newcommand{\1}{{\text{\Large $\mathfrak 1$}}}
\newcommand{\bin}{\operatorname{Bin}}
\newcommand\be{\begin{equation}}
\newcommand\ee{\end{equation}}
\def\bZ{\mathbb{Z}}
\def\Reff{R_{\rm{eff}}}
\def\bP{\mathbb{P}}
\def\bE{\mathbb{E}}
\def\eps{\varepsilon}
\def\sT{\mathcal{T}}
\begin{document}

\title{\bf Collisions of Random Walks}

\author{
Martin T. Barlow\thanks{Department of mathematics, University of British Columbia, Vancouver, Canada; Research partially supported by NSERC
(Canada) and the Peter Wall Institute of Advanced Studies;   barlow@math.ubc.ca}  \and  Yuval Peres\thanks{Microsoft Research, Redmond, Washington, USA; peres@microsoft.com} \and Perla Sousi\thanks{University of Cambridge, Cambridge, UK;   p.sousi@statslab.cam.ac.uk}
}

\maketitle


\begin{abstract}
A recurrent graph $G$ has the infinite collision property if two independent random walks on $G$, started at
the same point, collide infinitely often a.s.
We give a simple criterion in terms of Green functions for
a graph to have this property, and use it to prove that a critical Galton-Watson tree with finite variance conditioned to survive, the incipient infinite cluster
in $\Z^d$ with $d \ge 19$ and the uniform spanning tree in $\Z^2$ all have the
infinite collision property. For power-law combs and spherically symmetric trees, we determine precisely the phase boundary for the infinite collision property.
\newline
\newline
\emph{Keywords and phrases.} Random walks, collisions, transition probability, branching processes.
\newline
MSC 2010 \emph{subject classifications.} Primary 60J10, 60J35; Secondary 60J80, 05C81.
\end{abstract}

\section{Introduction}
Let $G$ be an infinite connected recurrent graph, and let $X$ and $Y$ be independent (discrete time) simple
random walks on $G$. For classical examples such as $\bZ$ or $\bZ^2$ it is easy to see that $X$ and $Y$ collide
infinitely often -- that is $Z = |\{t: X_t = Y_t\}| = \infty$, a.s.
However, Krishnapur and Peres \cite{KP} gave an example (the graph Comb($\Z$) which is described below)
of a recurrent graph for which the number of collisions $Z$ is a.s. finite.
This had an element of surprise, as this graph is recurrent, whence the expected number of collisions is infinite, see the remarks following Theorem 1.1 of \cite{KP} .
In this paper we study the finite collision property in more detail. We start by establishing a simple zero one law
and a sufficient condition (in terms of Green functions) for infinite collisions.
Using this we show that a critical Galton-Watson tree (conditioned to
survive forever),  the incipient infinite cluster in high dimensions, and the uniform spanning tree in two dimensions
all have  the infinite collision property.

We then examine subgraphs of Comb($\Z$) and  investigate when  they have the infinite collision property, and then
conclude the paper by looking at a class of spherically symmetric trees.

We begin by defining what we mean by the finite/infinite collision property. Throughout this paper we
will only consider connected graphs.

\begin{definition}
Let $G$ be a graph, and $X$, $Y$ be independent (discrete time) simple random walks on $G$. We write
$P_{a,b}$ for the law of the process $((X_t, Y_t), t \in \Z_+)$ when $X_0=a, Y_0=b$.
Let
$$ Z = \sum_{t=0}^\infty \1( X_t = Y_t)  $$
be the total number of collisions between $X$ and $Y$. If
\be
 P_{a,a}(Z < \infty) =1 \quad \hbox{ for all } a \in G
 \ee
then $G$ has the {\bf finite collision property}.
 If
\be
 P_{a,a}(Z = \infty) =1 \quad \hbox{ for all } a \in G
 \ee
then $G$ has the {\bf infinite collision property.}
\end{definition}

We will see below that these are the only two possibilities.

We recall the definition of Comb($\Z$):

\begin{definition}
Comb($\Z$) is the graph with vertex set $\Z \times \Z$ and edge set
\[
\{ [(x,n),(x,m)]: |m-n|=1 \} \cup \{[(x,0),(y,0)]: |x-y|=1]\}
\]
\end{definition}

\begin{definition}\label{defcomb}
Following \cite{Chen}, we define the wedge comb with  profile $f$, denoted Comb($\Z, f$)
 to be the subgraph of Comb($\Z$) with vertex set
\[
V = \{(x,y) \in \Z^2: 0\leq y \leq f(x) \}
\]
and edge set the set of edges of Comb($\Z$) with vertices in $V$.
We write Comb($\Z, \alpha$)  for the wedge comb with profile $f(k)=k^\alpha$.

\end{definition}

In \cite{Chen} it is proved that Comb($\Z, \alpha$)
has the infinite collision property when $\alpha < 1/5$.

We have the following phase transition:
\begin{theorem} \label{comb}
\begin{description}
\item[(a)] If $\alpha \leq 1$, then Comb($\Z, \alpha$) has the infinite collision property.
\item[(b)] If $\alpha > 1$, then  Comb($\Z, \alpha$) has the finite collision property.
\end{description}
\end{theorem}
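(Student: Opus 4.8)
The plan is to treat both parts through the pair process $W=(X,Y)$ on $G\times G$, for which a collision at time $t$ is a visit to the diagonal $\Delta=\{(z,z)\}$. Writing $Z_t=\sum_{s=0}^t \1(X_s=Y_s)$ and $p_t$ for the transition probabilities of simple random walk on Comb$(\Z,\alpha)$, the zero--one law mentioned above reduces part (a) to producing $\bP_{a,a}(Z=\infty)>0$ and part (b) to producing $\bP_{a,a}(Z=\infty)<1$. The common input to both is a set of heat-kernel estimates on the wedge comb. Writing $o$ for the origin and $R(t)=t^{1/(2+\alpha)}$ for the spine-scale reached by time $t$ (obtained from the commute-time identity, since the effective resistance to spine-distance $\rho$ is of order $\rho$ while the volume is of order $\rho^{1+\alpha}$), I would first establish $\sum_z p_t(o,z)^2 \asymp R(t)^{-(1+\alpha)}=t^{-(1+\alpha)/(2+\alpha)}$, together with the local recollision estimate for a point $z$ at spine-distance $\rho$: the walk there sees teeth of height $\rho^\alpha$, so $\sum_w p_r(z,w)^2$ behaves like $r^{-1/2}$ while $r\lesssim \rho^{2\alpha}$ (one-dimensional tooth regime) and crosses over to the global rate for larger $r$.

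For part (a) I would run a second-moment argument on $Z_t$. The first moment is $M_1(t)=\bE_{a,a}[Z_t]=\sum_{s\le t}\sum_z p_s(o,z)^2 \asymp t^{1/(2+\alpha)}$, which diverges, consistent with recurrence. For the second moment I would decompose $M_2(t)=\bE_{a,a}[Z_t^2]$ over ordered pairs of collision times $s\le u$ and use the Markov property at time $s$: conditioned on both walks being at $z$ at time $s$, the recollision probability over the remaining time is exactly $\sum_w p_{u-s}(z,w)^2$. Since at time $s$ the common point $z$ is typically at spine-distance $R(s)$, hence inside a tooth of height $R(s)^\alpha$, the dominant contribution comes from recollisions confined to that single tall tooth, where the recollision rate is the one-dimensional $r^{-1/2}$. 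Carrying out the resulting double sum, this term is of order $t^{(1+\alpha)/(2+\alpha)}$, to be compared with $M_1(t)^2\asymp t^{2/(2+\alpha)}$; their ratio is of order $t^{(\alpha-1)/(2+\alpha)}$, which stays bounded precisely when $\alpha\le1$. Thus $M_2(t)\le C\,M_1(t)^2$ for $\alpha\le1$, and Paley--Zygmund gives $\bP_{a,a}(Z_t\ge\tfrac12 M_1(t))\ge c>0$; letting $t\to\infty$ and invoking the zero--one law yields the infinite collision property. Equivalently one verifies the Green-function sufficient condition of the earlier section against these same estimates; the borderline case $\alpha=1$ requires tracking constants but is covered by the same bound.

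The serious difficulty is part (b), and it is genuinely a different phenomenon rather than a failure of the same bound. Because $\sum_t\sum_z p_t(o,z)^2=\infty$ for every $\alpha>0$, the expected number of collisions is always infinite, so finiteness for $\alpha>1$ cannot be read off from any first-moment estimate; it is the Krishnapur--Peres effect that $Z$ is a.s. finite while $\bE[Z]=\infty$. The mechanism revealed by the heat-kernel bounds is that, for $\alpha>1$, collisions arrive in well-separated bursts, each burst occurring during a single joint visit of the two walks to one tooth, at ever-increasing heights. My plan is therefore: (i) show that between consecutive collisions the two $x$-coordinates typically separate, so collisions organise into bursts indexed by joint tooth-visits; (ii) bound the probability that a joint visit to the tooth at spine-position $k$ produces any collision by a negative power of its height $k^\alpha$, via the one-dimensional height-matching estimate inside the tooth; and (iii) show, through a Borel--Cantelli argument over dyadic time scales together with the growing spine-scale $R(t)\to\infty$, that only finitely many joint visits (summed against the per-visit collision probabilities) can carry a collision when $\alpha>1$. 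Step (iii) is where the exponent $(\alpha-1)/(2+\alpha)$ reappears with the opposite sign and forces convergence. I expect (ii)--(iii) to be the main obstacle: one must work conditionally, or through a truncation of $\bP_{a,a}(Z\ge N)$, to circumvent the infinite unconditional expectation, exactly as in Krishnapur--Peres, and control the dependence between successive bursts.
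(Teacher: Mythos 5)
Your part (a) is correct in outline and lands on the right critical exponent, but it is a heavier route than the paper's. A direct second-moment argument on $Z_t$ needs a two-sided on-diagonal estimate $p_{2t}(o,o)\asymp t^{-(1+\alpha)/(2+\alpha)}$ (the lower bound in particular, to make $M_1(t)$ diverge at the stated rate) together with the crossover structure of the recollision rate inside a tooth. The paper instead invokes its general Green-function criterion: if $g_{B_r}(x,x)\le C\,g_{B_r}(o,o)$ for all $x$ in an exhausting sequence of proper subsets $B_r$, the infinite collision property follows. Taking $B_r$ to be the vertices at horizontal distance at most $r$ to the right of the origin, one has $g_{B_r}(x,x)=R_{\mathrm{eff}}(x,B_r^c)=d(x,B_r^c)\le r+1=g_{B_r}(o,o)$, precisely because $\alpha\le 1$ keeps every tooth no taller than the remaining horizontal distance. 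No heat-kernel asymptotics are needed; the second-moment work is done once, inside the proof of the criterion, for the killed walks.

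For part (b) there are two concrete gaps. First, your heat-kernel input is wrong for $\alpha>2$: the time to reach spine-distance $k$ saturates at order $k^4$ (a single long tooth excursion dominates, rather than many excursions of length $k^{2\alpha}$), so the on-diagonal decay is $t^{-3/4}$ and not $t^{-(1+\alpha)/(2+\alpha)}$; all exponents must be computed with $\alpha'=\alpha\wedge 2$ and $\beta'=(1+\alpha')/(2+\alpha')$, as the paper does throughout. Second, and more seriously, your steps (ii)--(iii) --- the burst decomposition indexed by joint tooth-visits and the control of dependence between bursts --- are exactly where the proof must live, and you leave them open. The paper avoids bursts entirely by decomposing space, not time: for the column segment $\{(k,y):h/3\le y\le 2h/3\}$ it bounds the probability of any collision there by the ratio of the unconditional first moment of the collision count in the column ($\le c\,h\,k^{-\alpha'}$, from the heat-kernel upper bound) to the conditional first moment given at least one such collision ($\ge c\,h$, because a collision in the middle third gives, by $g_B(x,x)=R_{\mathrm{eff}}(x,B^c)$, an expected further $\asymp h$ collisions before either walk leaves the column). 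This yields $P(\text{collision in the segment})\le c\,k^{-\alpha'}$ uniformly in $h$; summing over $k$ and dyadic $h\le k^{\alpha}$ converges exactly when $\alpha'>1$, and the first Borel--Cantelli lemma finishes with no independence or burst bookkeeping at all. Your instinct to ``work conditionally'' is the right one, but the conditional lower bound via a local Green function is the ingredient your step (ii) gestures at without constructing, and without it the argument does not close.
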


We remark that the proofs of both (a) and (b) extend to the profiles of the form $f(x) = C|x|^{\alpha}$.
Part (b) for $1<\alpha <2$ was also obtained independently by J. Beltran, D.Y. Chen, T. Mountford and D. Valesin (private communication).

\begin{remark} \label{no-mon}
{\rm
This theorem shows that if the `teeth' in the comb are large then the finite collision
property will hold, while it fails if they are small. However,
there is no simple monotonicity property for the finite collision property: Comb($\Z$)
has the finite collision property but is a subgraph
of $\bZ^2$, which does not.

Further, we do not have any kind of `bracketing'
property for collisions: we have
$\text{Comb}(\Z,1) \subset \text{Comb}(\Z,2) \subset \bZ^2 \subset \text{Comb}(\Z^2)$;
and of these $\text{Comb}(\Z,1)$ and $\bZ^2$ have the infinite collision property
while the other two graphs have the finite collision property.
(See \cite{KP} for the definition of $\text{Comb}(\Z^2)$, and the proof that it has the
finite collision property.)
 }
\end{remark}

In Section \ref{sec:green} we will obtain a criterion, in terms of Green functions, or equivalently
electrical resistance, for a graph to have the infinite collision property. Using this, we can show
that several graphs arising in critical phenomena have the  infinite collision property.

\begin{theorem} \label{galton}
The following random graphs all have the  infinite collision property:
\begin{description}
\item[(a)] A critical Galton Watson tree with finite variance conditioned to survive forever.
\item[(b)] The incipient infinite cluster for critical percolation in dimension $d \ge 19$.
\item[(c)] The Uniform Spanning Tree (UST) in $\Z^2$.
\end{description}
\end{theorem}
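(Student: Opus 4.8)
The plan is to deduce all three parts from the single Green-function criterion of Section~\ref{sec:green}, reducing the work to verifying resistance and volume estimates for each graph. Recall the mechanism behind such a criterion: writing $Z_n$ for the number of collisions accumulated before either walk leaves the ball $B_n=B(o,n)$ around a fixed root $o$, a second-moment (Paley--Zygmund) bound, with the strong Markov property applied at the first collision, gives
\[
\bP_{o,o}(Z_n>0)\ \ge\ \frac{(\bE_{o,o}Z_n)^2}{\bE_{o,o}[Z_n^2]}\ \ge\ \frac{\bE_{o,o}Z_n}{2\,\sup_{x\in B_n}\bE_{x,x}Z_n}.
\]
It therefore suffices to bound the right-hand side below by a positive constant uniformly in $n$, since the zero--one law then upgrades a uniformly positive chance of a further collision into $Z=\infty$ a.s. Up to the degree weights, the numerator and the diagonal sums in the denominator are exactly the on-diagonal Green functions of the product walk killed on exiting $B_n\times B_n$, so the whole question becomes a comparison of killed Green functions, which is what the criterion consumes.

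The unifying observation is that all three graphs share the same qualitative geometry for the relevant walk: volume growth $V(o,r)\asymp r^{d_f}$ and effective resistance $\Reff(o,\partial B_r)\asymp r^{d_w-d_f}$, with spectral dimension $d_s=2d_f/d_w<2$, and in every case the resistance exponent is $\zeta=d_w-d_f=1$, so that $\Reff(o,\partial B_r)\asymp r\to\infty$. Concretely: (a) for the critical finite-variance Galton--Watson tree conditioned to survive one has $d_f=2,\ d_w=3,\ d_s=4/3$, with the quenched sub-Gaussian heat-kernel bounds of Barlow--Kumagai; (b) for the incipient infinite cluster in $d\ge 19$ the Alexander--Orbach values $d_f=2,\ d_w=3,\ d_s=4/3$ hold by the work of Kozma--Nachmias, together with matching heat-kernel estimates; (c) for the UST in $\bZ^2$ one has $d_f=8/5,\ d_w=13/5,\ d_s=16/13$ by Barlow--Masson. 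The condition $d_s<2$ is precisely what makes $\sum_{t\le r^{d_w}} p_{2t}(x,x)\asymp r^{d_w-d_f}$, i.e. it is the balance ensuring that the diagonal Green functions are governed by the resistance.

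Feeding these estimates into the criterion, both the numerator $\bE_{o,o}Z_n$ and the denominator $\sup_{x\in B_n}\bE_{x,x}Z_n$ turn out to be comparable to $\Reff(o,\partial B_n)\asymp n$: collisions started from the centre accumulate at the rate of the resistance, while the worst starting point (deepest in the ball) contributes only the same order. Hence the displayed ratio is bounded below, the criterion applies, and each of the three graphs has the infinite collision property. The arithmetic is genuinely identical across the three cases once the exponents are in hand, the only structural distinction being that the UST and the Galton--Watson tree are trees (so resistance is carried by a single spine), whereas for the IIC the relation $\Reff\asymp n$ is a theorem of Kozma--Nachmias.

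The hard part is that these are \emph{random} graphs, so the volume and resistance inputs are not available as clean deterministic bounds valid at every scale and every vertex. For the Galton--Watson tree and the IIC the heat-kernel estimates carry logarithmic or polynomial fluctuation corrections and hold only beyond a random starting scale, and one must also confront unbounded degrees, which is why the comparison $\bE_{o,o}Z_n\asymp\Reff(o,\partial B_n)$ should be phrased through the heat kernel $p_t(x,y)$ rather than through $\mu\asymp1$. Moreover the denominator $\sup_{x\in B_n}\bE_{x,x}Z_n$ demands good resistance and volume control at \emph{all} vertices of $B_n$ simultaneously. The heart of the argument is thus to promote the annealed or high-probability regularity statements of the cited references into the uniform, quenched comparison of killed Green functions that the criterion requires, via a union bound over the vertices of $B_n$ using that bad scales occur with summable probability; once this uniform control is secured, the reduction above runs for almost every realisation and the dichotomy is closed by the zero--one law.
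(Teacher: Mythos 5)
Your overall strategy --- reduce all three cases to the Green-function criterion plus a second-moment bound --- is indeed the paper's strategy, and your sources for the resistance estimates (Kozma--Nachmias for the IIC, Barlow--Masson for the UST) are the right ones. But there is a genuine gap at the step you yourself call the heart of the argument: controlling $\sup_{x\in B_n} g_{B_n}(x,x)$ uniformly over all vertices of $B_n$ by promoting quenched heat-kernel estimates via a union bound. For these random graphs the estimates at a fixed vertex fail at scale $\lambda$ with probability decaying only polynomially in $\lambda$, while $|B_n|$ grows polynomially in $n$, so summability over the vertex set is not available unless you let $\lambda$ grow with $n$ --- which destroys the uniform-in-$n$ lower bound on your Paley--Zygmund ratio. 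The observation you are missing is that this uniform control is free: since $\Reff(x,A)\le d(x,A)$ in any graph, one has the deterministic bound $g_{B_n}(x,x)=\Reff(x,B_n^c)\le d(x,o)+n\le 2n$ for every $x\in B_n$ and every realisation of the graph. Hence the only probabilistic input required is a lower bound at the single vertex $o$, namely $\Reff(o,B_n^c)\ge n/\lambda$ with probability at least $1-\psi(\lambda)$ where $\psi(\lambda)\to 0$; this is exactly Corollary \ref{cor:J}, and it is precisely the statement proved in \cite{KN} and \cite{BM}. No volume growth, spectral dimension, or heat-kernel fluctuation analysis enters at all.

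For part (a) the paper bypasses even this and gives an elementary direct argument: decomposing Kesten's tree into the backbone plus hanging critical trees, Kolmogorov's estimate $P(Z_n>0)\sim 2/(\sigma^2 n)$ shows that with probability at least $1-C\epsilon$ no tree hanging off the first $r$ backbone vertices has depth exceeding $r/\epsilon$, whence $g_{B_r}(x,x)\le r+r/\epsilon$ for all $x\in B_r$ while $g_{B_r}(o,o)=r+1$, and Theorem \ref{greenkernel} applies along a subsequence of scales; letting $\epsilon\to 0$ finishes. Your heat-kernel route for this case is workable in principle (the resistance lower bound follows from Fujii--Kumagai, as the paper remarks), but it is much heavier than necessary, and in every case the correct division of labour is: deterministic upper bound on the diagonal Green function everywhere, probabilistic lower bound at the root only.
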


For background on the critical Galton Watson tree conditioned to survive, see \cite{Kesten}.
For background on the incipient infinite cluster and the UST, see \cite{KN} and \cite{BM}, respectively.
See Corollary \ref{cor:GWinf} for a class of critical Galton-Watson trees with infinite variance.

Another graph for which we can prove the infinite collision property is the supercritical percolation cluster in $\Z^2$, see Theorem \ref{percolation}. 
This was proved independently by Chen and Chen \cite{Chen2}.

Finally we examine some spherically symmetric trees.
\begin{definition}
A tree is called \textbf{spherically symmetric} if every vertex at distance $n$ from the root has the same number of children.
Let $(b_j)_j$ be a sequence of positive integers. We define the
\textbf{spherically symmetric tree associated to the sequence
$(b_j)_j$} as follows:  we attach a segment of length $b_0$ to the root $o$ . At the end of that segment we have a branch point with two branches, each of them having
length $b_1$, and so on.
\end{definition}

\begin{center}
\epsfig{file=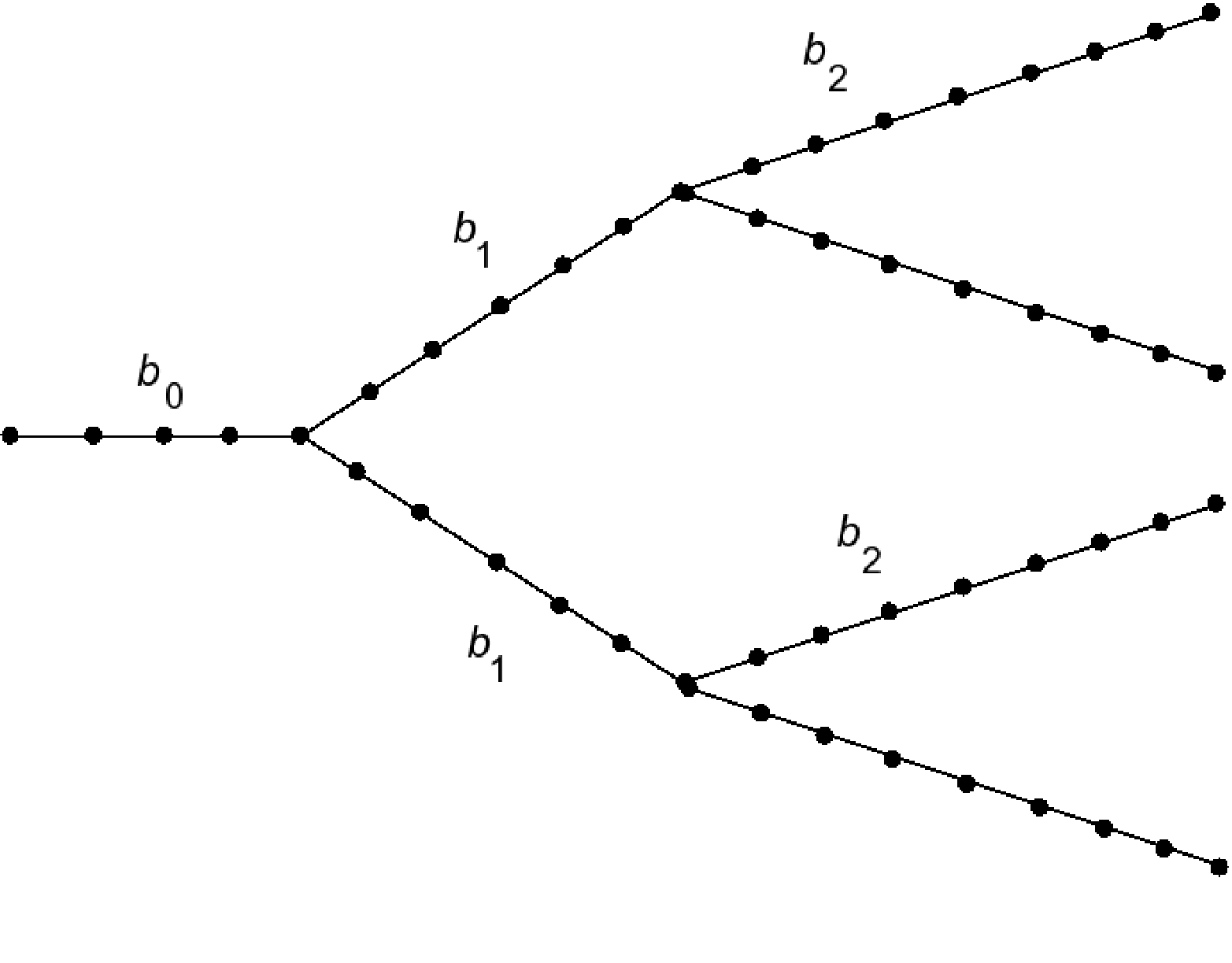,height=6cm}
\\ Figure 1. A spherically symmetric tree.
\end{center}

We will look at a class of spherically symmetric trees where the lengths are of the form $b_j=2^{2^{\beta j}}$, where $\beta>0$,
and will show that these trees exhibit two  phase transitions:
the critical parameter for recurrence of the product chain on $T \times T$ is $\beta=2$,
while the critical parameter for
the infinite collision property is $\beta=1/2$. We establish this in the following Theorem.

\begin{theorem}
\label{trees}
\begin{description}
  \item[(a)] When $\beta \geq 2$, the product chain on $T \times T$ is recurrent, and hence the tree has the infinite collision property.
  \item[(b)] When $\beta < 2$, the product chain on $T \times T$ is transient.
  \item[(c)] When $\beta \geq \frac{1}{2}$, the tree has the infinite collision property.
  \item[(d)] When $\beta < \frac{1}{2}$, the tree has the finite collision property.
\end{description}
\end{theorem}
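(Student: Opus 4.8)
The plan is to exploit the spherical symmetry of $T$ to reduce every quantity to the \emph{distance chain} $D_t=d(o,X_t)$, a nearest‑neighbour birth--death chain on $\Z_+$ reflected at the root, with an outward push of $2:1$ at each branching generation. Writing $R_k=\sum_{j<k}b_j$ for the distance to generation $k$, the part of $T$ between generations $j$ and $j+1$ consists of $2^j$ parallel segments of length $b_j$, so the effective resistance and volume out to generation $k$ are
\be
 \rho_k:=\Reff(o,\{d=R_k\})\asymp\sum_{j<k}\frac{b_j}{2^j}\asymp\frac{b_{k-1}}{2^{k-1}},\qquad V_k\asymp 2^{k-1}b_{k-1},
\ee
both dominated by the outermost shell since $b_j=2^{2^{\beta j}}$ grows doubly exponentially. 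As $\sum_j b_j/2^j=\infty$ for every $\beta>0$, the single walk is recurrent, as it must be. Throughout I use that by symmetry $p_t(o,x)=\bP(D_t=d(o,x))/c_{d(o,x)}$, where $c_r$ is the degree‑weighted number of vertices at distance $r$, and that $\sum_x p_t(o,x)^2\asymp p_{2t}(o,o)$ because all degrees lie in $\{1,2,3\}$.

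For (a) and (b) I would use that the product chain is reversible and returns to $(o,o)$ with probability $p_t(o,o)^2$, so it is recurrent iff $\sum_t p_t(o,o)^2=\infty$; recurrence then forces $X_t=Y_t=o$ infinitely often, giving (a). Everything thus reduces to the on‑diagonal decay of $p_t(o,o)=\bP(D_t=0)$. Using $\rho(r)V(r)\asymp r^2$ inside each shell I expect the heat‑kernel estimate
\be
 p_t(o,o)\asymp \frac{1}{V(\sqrt t)}\asymp \frac{1}{2^{k}\sqrt t}\qquad\text{for }t\in[b_{k-1}^2,b_k^2],
\ee
whence $\sum_t p_t(o,o)^2\asymp\sum_k 4^{-k}\log(b_k/b_{k-1})\asymp\sum_k 2^{(\beta-2)k}$. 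This diverges precisely when $\beta\ge2$, yielding the recurrence/transience dichotomy of (a)--(b).

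Parts (c) and (d) are the substantive ones, since for $\beta<2$ the product chain is transient and one must invoke the Green‑function criterion of Section~\ref{sec:green}. A clean orienting observation is that for $\beta<2$ the number of collisions \emph{at the root} equals $|\{t:X_t=Y_t=o\}|$, with expectation $\sum_t p_t(o,o)^2<\infty$, hence is a.s.\ finite; so when collisions are infinite they must accumulate at ever deeper generations. The natural object is therefore the \emph{meeting level} $\mathcal G_t$, the generation of the youngest common ancestor of $X_t$ and $Y_t$, and more precisely whether the two walks re‑occupy a common segment infinitely often. I would show, after reducing through spherical symmetry, that this re‑merging is governed by an effective birth--death chain on generations whose transition rates pit the segment length $b_m$ (which must be crossed jointly to deepen the common ancestor) against the binary branching (the two walks select the same child only half the time); the balance $b_m=2^{2^{\beta m}}$ against $2^m$ makes this chain recurrent iff $\beta\ge\tfrac12$. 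Recurrence means infinitely many re‑merges and hence, since each merge forces at least one collision, infinitely many collisions; together with the zero--one law this gives (c) for $\tfrac12\le\beta<2$, the range $\beta\ge2$ being already covered by (a). For (d) the same analysis gives transience, so the walks split into distinct branches for good, and a Borel--Cantelli/decoupling estimate—showing that the probability of \emph{any} collision in the $m$‑th shell is summable in $m$, even though $\bE[Z]=\infty$—yields the finite collision property.

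The main obstacle is that the geometry is so irregular—$V_k$ and $\rho_k$ jump by the doubly‑exponential factor $b_k/b_{k-1}$ between consecutive generations—that the standard heat‑kernel machinery (which presumes volume doubling) does not apply, and the two‑sided bound on $p_t(o,o)$ must be obtained by hand, comparing the walk to a one‑dimensional diffusion along each long segment and controlling the factor‑$2$ losses at the rare branch points. The deeper difficulty, and the heart of (c)--(d), is that collisions concentrate at the \emph{deepest} scale reached: here $\Reff(x,B_n^c)$ \emph{increases} with the depth of $x$ (unlike on $\Z^2$), so a naive single‑ball second‑moment bound degenerates (the Paley--Zygmund ratio $\to0$ as everything piles into the outermost shell). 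One must instead decompose by scale, quantifying the clustering of collisions within a shell (of order $\log b_m$ collisions per joint visit to a shell‑$m$ segment) together with the near‑independence of successive shells, and feed this into the Green‑function criterion. Making the meeting‑level chain and these scale‑by‑scale estimates rigorous on such an irregular tree is where essentially all the work lies.
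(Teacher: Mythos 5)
Your reduction to the distance (birth--death) chain, the resistance/volume computation, and the on-diagonal estimate $p_t(o,o)\asymp t^{-1/2}(\log t)^{-1/\beta}$ are consistent with the paper. For (b) the paper proves exactly the upper bound you state, via $q_t(x,x)\le 2g_B(x,x)/(tP_x(\tau_B\ge t))$ applied to the birth--death chain, and concludes transience from $\sum_t p_t(o,o)^2<\infty$. For (a), however, the paper avoids any lower heat-kernel bound: it applies the Nash--Williams criterion directly to $T\times T$ with cutsets built from $\Pi_n\times(\cup_{i\le n}\Pi_i)$ and its reflection, using only $|\Pi_n|\asymp(\log n)^{1/\beta}$. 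Your route would require a matching \emph{lower} bound on $p_t(o,o)$ on a graph with no volume doubling, which is real extra work that the Nash--Williams argument sidesteps.

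The genuine gap is in (c), in two places. First, ``each merge forces at least one collision'' is false: two independent walks whose youngest common ancestor deepens, or which occupy the same segment, need not ever be at the same vertex at the same time --- on a path there is already a parity obstruction, and even with matching parity a meeting is an event whose probability must be estimated, not a topological consequence. Second, and more seriously, the asserted ``meeting-level birth--death chain on generations, recurrent iff $\beta\ge\tfrac12$, from balancing $b_m$ against $2^m$'' is never derived, and that balance does not visibly produce the exponent $\tfrac12$. In the paper the threshold emerges from a concrete per-generation second-moment computation: the $2^n$ parallel segments of length $b_n$ are cut into $\asymp 2^{\beta n}$ dyadic subintervals $J_{n,l}$; one shows $E_o(Z_{n,l})\asymp |I_{n,l}|\log|I_{n,l}|/2^n$ (two-sided transition estimates over the time window $[|I_{n,l}|^2,|I_{n,l}|^4]$, the logarithm coming from $\sum_t 1/t$) and $E_o(Z_{n,l}\mid Z_{n,l}>0)\asymp|I_{n,l}|$ (effective resistance), whence $P_o(Z_{n,l}>0)\asymp \log|I_{n,l}|/2^n\asymp 2^{(\beta-1)n}$. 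Multiplying by the $2^{\beta n}$ subintervals gives $E_o(Z_n)\asymp 2^{(2\beta-1)n}$, bounded below iff $\beta\ge\tfrac12$, and a decorrelation estimate across subintervals (gambler's ruin through the branch points) upgrades this to $P_o(Z_n>0)\ge c$, which feeds into Corollary \ref{cor:fsets}(b). Your sketch contains neither the $\log|I_{n,l}|$ gain nor the subinterval count, and these are precisely the two ingredients that produce the exponent $2\beta-1$. Part (d) has the right shape in your plan (summability over scales of the probability of any collision), but it needs the same quantitative input: $P_o(Z_{n,l}>0)\le c\,2^{(\beta-1)n}$ summed over $2^{\beta n}$ subintervals and over $n$ converges iff $\beta<\tfrac12$. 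As written, the proposal names the correct phase boundary but does not contain an argument for it.
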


\section{0-1 Law} \label{01law}

In this section we are going to prove that the event of having
infinitely many collisions in a recurrent graph is a trivial event,
and hence has probability either 0 or 1.  Thus in order to show the
infinite collision property it suffices to show that infinitely many
collisions occur with positive probability.

\begin{proposition} \label{0-1}
Let $G$ be a (connected) recurrent graph, $X$ and $Y$ be independent random walks on $G$, and $Z$ be the number of
collisions. Then for each $(a,b) \in G \times G$,
\begin{align*}
P_{a,b}(Z=\infty) \in \{ 0,1 \} .
\end{align*}
Further, if there exist $a_0$, $b_0$ such that $P_{a_0, b_0}(Z=\infty)>0$ then
$P_{a,b}(Z=\infty)=1$ for all $a$, $b$ such that $P_{a,b}(X_m=a_0, Y_m=b_0)>0$ for some $m \ge 0$.
In particular, either $P_{a,a}(Z=\infty)=0$ for all $a$ or else  $P_{a,a}(Z=\infty)=1$ for all $a$.
\end{proposition}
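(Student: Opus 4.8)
The plan is to prove a zero-one law for the event $\{Z=\infty\}$, so I want to show this event is invariant under the underlying dynamics in a way that lets me apply a tail-triviality or a martingale/harmonic argument. The key structural observation is that the product process $W_t = (X_t, Y_t)$ is itself a Markov chain on $G \times G$, and the event $\{Z = \infty\}$ is a tail event for $W$: whether infinitely many collisions occur does not depend on any finite initial segment of the trajectory. Define $h(a,b) = P_{a,b}(Z = \infty)$. By the Markov property applied at a fixed time $m$, one gets that the process $h(X_t, Y_t)$ is a bounded martingale under $P_{a,b}$, so it converges a.s.; and because $\{Z = \infty\}$ is a tail event, the martingale limit is exactly $\1(Z=\infty)$. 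Hence $h(X_t, Y_t) \to \1(Z=\infty)$ a.s. This is the engine behind all three conclusions.

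Next I would exploit recurrence to upgrade this into the stated zero-one dichotomy. First, to see $h$ takes only values in $\{0,1\}$ at the diagonal and more generally: fix $(a,b)$ and suppose $h(a,b) > 0$. I would argue that $h$ is a harmonic function for the product chain $W$ (it satisfies $h(a,b) = \sum_{(a',b')} p_W((a,b),(a',b')) h(a',b')$ directly from the Markov property at time one), and then use that $W$, restricted to the set of states it can reach, is an irreducible Markov chain whose recurrent structure forces bounded harmonic functions to propagate. Concretely, if $P_{a,b}(W_m = (a_0,b_0)) > 0$ for some $m$, then the martingale convergence combined with $h(a_0,b_0) > 0$ forces $h(a,b) > 0$; but since $h$ only takes values $0$ or $1$ (which I establish from the tail-event identity $h(X_t,Y_t) \to \1(Z=\infty)$ giving that $h$ evaluated along the path is eventually $0$ or $1$), positivity forces $h(a,b) = 1$. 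This yields precisely the propagation statement: $h(a_0,b_0) > 0$ implies $h(a,b) = 1$ whenever $(a_0,b_0)$ is accessible from $(a,b)$.

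The cleanest route to the first claim, that $h(a,b) \in \{0,1\}$, is the following. By martingale convergence $h(X_t,Y_t)$ converges a.s.\ to $\1(Z=\infty)$, so on the event $\{Z=\infty\}$ the sequence $h(X_t,Y_t)$ tends to $1$, and on $\{Z<\infty\}$ it tends to $0$. Now I use recurrence of $G$: the diagonal states $(c,c)$ are visited infinitely often by $W$ on the event $\{Z = \infty\}$ (indeed each visit to the diagonal is a collision). Along such a subsequence $h(X_{t_k}, Y_{t_k}) = h(c_k,c_k)$, and this must converge to $1$; a compactness or direct argument over the recurrent class then pins $h$ to be constant equal to its limiting value on the relevant communicating class. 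Because the walks are recurrent on a connected graph, the diagonal is a single accessible target, which collapses the possible limiting values and gives $h(a,b) \in \{0,1\}$ for all $(a,b)$, and in particular $h(a,a)$ is either identically $0$ or identically $1$ over all $a$.

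The main obstacle I anticipate is handling the interaction between reducibility of the product chain and recurrence cleanly: the product chain $W$ on $G \times G$ need not be irreducible (parity or bipartiteness can split the state space), so I cannot simply invoke a blanket ``irreducible recurrent $\Rightarrow$ constant harmonic functions'' statement. I would address this by working within the communicating class of $W$ that contains the starting state, using accessibility ($P_{a,b}(W_m = (a_0,b_0)) > 0$) rather than full irreducibility as the hypothesis for propagation — which is exactly how the proposition is phrased. The subtle point requiring care is justifying that $h(X_t,Y_t)$ converges to the indicator of the tail event rather than merely to some limit; this is where the identity of the tail $\sigma$-field with the invariant structure of $W$, together with Lévy's zero-one law applied to $\1(Z=\infty) = \lim_t E[\1(Z=\infty)\mid \FF_t]$ and the Markov property to identify $E[\1(Z=\infty)\mid \FF_t] = h(X_t,Y_t)$, must be invoked precisely.
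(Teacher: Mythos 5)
Your reduction via L\'evy's zero--one law is sound as far as it goes: $h(a,b)=P_{a,b}(Z=\infty)$ is harmonic for the product chain $W_t=(X_t,Y_t)$, the process $h(W_t)$ is a bounded martingale, and since $\{Z=\infty\}$ is a tail event for $W$ it converges a.s.\ to $\1(Z=\infty)$. The propagation step (accessibility of $(a_0,b_0)$ from $(a,b)$ plus $h(a_0,b_0)>0$ gives $h(a,b)>0$, hence $h(a,b)=1$ once the dichotomy is known) is also fine and matches the paper. The gap is in the step that is supposed to deliver $h\in\{0,1\}$. You invoke ``the recurrent structure'' of $W$ and ``a compactness or direct argument over the recurrent class,'' but the product chain $W$ on $G\times G$ is \emph{not} recurrent in general even though $G$ is: the proof of Corollary \ref{cor:fsets} disposes of the recurrent case in one line, and every interesting example in the paper (Comb($\Z,\alpha$), the spherically symmetric trees with $\beta<2$, the IIC, the UST) has $G\times G$ transient. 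For a transient chain, tail events need not be trivial and bounded harmonic functions need not be constant on a communicating class. Your observation that on $\{Z=\infty\}$ the diagonal is hit infinitely often, so that $h(c_k,c_k)\to 1$ along the sequence of collision sites, only shows $\sup_c h(c,c)=1$ when $P_{a,b}(Z=\infty)>0$; since $G$ is infinite the $c_k$ may escape to infinity, there is no compactness to exploit, and no uniform lower bound on the probability of reaching $(c_k,c_k)$ from $(a,b)$, so you cannot conclude $h(a,b)=1$, nor rule out $0<h(a,b)<1$.

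What is actually needed, and what the paper supplies, is triviality of the \emph{joint} tail $\sigma$-field $\sT=\bigcap_n\sigma(\sT^X_n,\sT^Y_n)$, where $\sT^X_n=\sigma(X_n,X_{n+1},\dots)$. This comes in two steps: Orey's theorem (Blackwell--Freedman) gives triviality of each individual tail $\sT^X=\bigcap_n\sT^X_n$ and $\sT^Y$ because each factor is a recurrent chain; and the Lindvall--Rogers lemma, which uses the independence of $X$ and $Y$, justifies exchanging the intersection over $n$ with the join, i.e.\ $\bigcap_n\sigma(\sT^X_n,\sT^Y_n)=\sigma(\sT^X,\sT^Y)$. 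That exchange is the genuinely non-trivial point --- it can fail for dependent processes, and the inclusion one needs is not automatic --- and it is exactly the ingredient your argument is missing. Once $\sT$ is known to be trivial, $\{X_n=Y_n\text{ i.o.}\}\in\sT$ gives the dichotomy directly, with no appeal to martingale convergence or to any recurrence of the product chain.
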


\proof

Let $\sT^X_n=\sigma( X_n, X_{n+1}, \dots)$, and define $\sT^Y_n$ analogously.
Then since $X$ is a recurrent Markov chain $\sT^X = \cap_n \sT^X_n$ is trivial by Orey's theorem (see \cite{Orey}).
By \cite[Lemma 2]{LR} we have, since $X$ and $Y$ are independent, that
$$ \sT= \bigcap_{n=1}^\infty \sigma( \sT^X_n, \sT^Y_n) = \sigma(\sT^X, \sT^Y), $$
which is trivial since $\sT^X$ and $\sT^Y$ are both trivial.
Since the event $\{ Y_n=X_n \text{ i.o.}\}$ is $\sT$ measurable, it therefore has probability
0 or 1.

Now suppose $P_{a_0, b_0}(Z=\infty)=1$ and let $a,b,m$ be as above, i.e. $P_{a,b}( X_m =a_0, Y_m =b_0)>0$.
Then
\[
P_{a,b}(Z = \infty) \ge  P_{a,b}(Z = \infty | X_{m}=a_0, Y_{m}=b_0) P_{a,b}( X_m = a_0, Y_m =b_0)>0.
\]
By the 0-1 law therefore $P_{a,b}(Z = \infty)=1$.
\qed


\begin{remark} \label{zeroone}
{\rm The proof of Proposition \ref{0-1} applies to any recurrent chain.
Note that if $Z'$ denotes the total number of edges that are crossed at the same time by
two independent random walks on a recurrent graph
(started from the same state), then the event $\{Z' =\infty \}$ has probability zero or
one (since the sequence of edges crossed by a recurrent random walk forms a recurrent chain.)
}
\end{remark}

\begin{corollary} \label{cor:fsets}
Let $A_n$ be finite subsets of $G$, let
$$ Z(A_n) := \sum_t  \1(X_t=Y_t \in A_n)$$
be the number of collisions in $A_n$, and $F_n =\{ Z(A_n)>0\}$.  \\
(a) If  $G = \cup_n A_n$ and
$P( F_n \hbox { occurs i.o.} )=0$ then $G$ has the finite collision property. \\
(b) If $A_n$ are disjoint and  $P(F_n) > c >0$ for all $n$ then  $G$ has the infinite collision property.
\end{corollary}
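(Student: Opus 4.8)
The plan is to prove the two parts separately, in each case producing an event of positive probability and then invoking Proposition~\ref{0-1} to upgrade it to a full-probability statement; throughout I take the ambient measure to be $P_{a,a}$ for the relevant diagonal start. Part (b) is the more direct. Because the $A_n$ are disjoint, each occurrence of $F_n$ witnesses a collision at a vertex of $A_n$ that is distinct from every collision counted by any other $F_m$; hence the inclusion $\{F_n \text{ i.o.}\} \subseteq \{Z=\infty\}$ holds deterministically. It then suffices to show $P(F_n \text{ i.o.})>0$, and for this I would use the reverse Fatou inequality for events, $P(\limsup_n F_n)\ge \limsup_n P(F_n)$, which needs no independence. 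Since $P(F_n)>c$ for all $n$, this gives $P(\limsup_n F_n)\ge c>0$, so $P_{a,a}(Z=\infty)\ge c>0$, and Proposition~\ref{0-1} yields $P_{a,a}(Z=\infty)=1$ for every $a$, i.e.\ the infinite collision property.

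For part (a) the goal is instead to show $P(Z=\infty)=0$ and conclude again via the 0-1 law. The inclusion I would like to establish is $\{Z=\infty\}\subseteq\{F_n \text{ i.o.}\}$: if the walks produce infinitely many \emph{distinct} collision vertices, then, since each $A_n$ is finite and $G=\bigcup_n A_n$, these vertices cannot all be covered by finitely many of the $A_n$, so $F_n$ must occur for infinitely many $n$. Granting this inclusion, $P(Z=\infty)\le P(F_n \text{ i.o.})=0$ and the 0-1 law gives the finite collision property at every starting point.

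The main obstacle is that $Z=\infty$ does \emph{not} by itself guarantee infinitely many distinct collision vertices: a priori the walks could collide infinitely often while returning to a single finite set of vertices, in which case only finitely many $F_n$ would occur and the inclusion would fail. To rule this out I would analyse the product chain $W=(X,Y)$ on $G\times G$. Copying a single walk in both coordinates shows $(a,a)\to (v,v)$ for every $v$, so all diagonal states lie in one communicating class together with $(a,a)$. I then split on recurrence versus transience of $(a,a)$ for $W$. If $(a,a)$ is recurrent, the entire class—hence every diagonal point—is visited infinitely often a.s., producing infinitely many distinct collision vertices and forcing $P(F_n \text{ i.o.})=1$, which contradicts the hypothesis. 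Therefore $(a,a)$ is transient, so every $(v,v)$ is transient and is visited only finitely often a.s.; consequently, almost surely no vertex is a collision point infinitely often, and on $\{Z=\infty\}$ there must be infinitely many distinct collision vertices. This restores the desired inclusion almost surely and completes the argument.
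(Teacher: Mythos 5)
Your proof is correct and follows essentially the same route as the paper: part (b) via $Z \ge \sum_n \1_{F_n}$ together with reverse Fatou and the 0--1 law, and part (a) via the dichotomy between recurrence and transience of the product chain on $G \times G$ (recurrence being incompatible with the hypothesis, transience forcing each finite $A_n$ to host only finitely many collisions). The extra care you take in ruling out infinitely many collisions concentrated on finitely many vertices is exactly the point the paper handles by the same transience argument.
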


\begin{proof} (a) If $G \times G$ is recurrent then there are a.s. infinitely many collisions at each
point $x \in G$, and so  $P( F_n \hbox { occurs } i.o. )=1$. We can therefore assume that
$G \times G$ is transient. Hence there are only finitely many collisions in each set $A_n$,
and as the total number of sets $A_n$ with a collision is finite, the total number of collisions
is finite. \\
(b) We have $P( F_n  \hbox { occurs i.o.} )> c$. However,  $Z \ge \sum_n \1_{F_n}$, and
so $P(Z = \infty)>c$. So by the 0-1 law, Proposition \ref{0-1}, we get $P(Z=\infty)=1$.
\end{proof}

\section{Green function criterion for $\infty$ collisions} \label{sec:green}
\subsection{Background material}

Firstly we are going to recall a few facts about heat kernels and effective resistances.
We will follow rather closely the exposition   in \cite{Barlow3} and \cite{Barlow}.
Let $d(x)$ denote the degree of a vertex $x$ in a graph $G$. For two functions
$f$, $g$ $\in \R^{V(G)}$ we define the quadratic form
\begin{align*}
\mathcal{E}(f,g) = \frac{1}{2} \sum_{\substack{x,y \in V(G) \\ x\sim y}}(f(x)-f(y))(g(x)-g(y)).
\end{align*}

We define the transition density
\[
q_t(x,y) = \frac{P_x(Y_t = y)}{d(y)}, \quad t \in \bZ_+.
\]
Here we have divided by the degree of the vertex to make the transition density a symmetric function.

Let $A$ and $B$ be two subsets of $V(G)$.
The effective resistance between $A$ and $B$ is defined as follows:
\begin{align}
\label{effective}
R_{\text{eff}}(A,B)^{-1} = \inf \{ \mathcal{E}(f,f): \mathcal{E}(f,f)< \infty, f|_{A} = 1, f|_{B} = 0\}.
\end{align}
The term effective resistance comes from electrical network theory, since we can think of
our graph as an electrical network having unit resistances wherever there is an edge between two vertices.
If we glue all points of $A$ to a point $a$  and all points of $B$ to $b$ and apply a voltage $V$
which then induces a current $I$ from $a$ to $b$, then the ratio $\frac{V}{I}$ is constant and is equal to the effective
resistance.

From the definition \eqref{effective} of effective resistance we see that there is a unique function $f$ achieving
the infimum appearing on the right hand side of \eqref{effective}.
This function must be harmonic everywhere outside the sets $A$ and $B$.

For any graph $G$ the effective resistance satisfies $R_{\text{eff}}(x,y)\leq d(x,y)$ and if
 $G$ is a tree, then
\[
R_{\text{eff}}(x,y) = d(x,y),
\]
where $d(x,y)$ stands for the graph theoretic distance between $x$ and $y$.

Let $B(x_0,r) = \{y: d(x_0,y) \leq r \}$ and $Y_t^B$ ($B:= B(x_0,r)$) be the discrete time
simple random walk on $G$ killed when it exits $B(x_0,r)$ and let $q_t^B$ be its transition density.
The Green kernel is defined  by $g_B(x,y) = \sum_{t=0}^{\infty} q_t^B(x,y)$.

It is easy to see that $g_B(x,\cdot)$ is a harmonic function on $B\setminus \{x \}$ and that it satisfies the
reproducing property, i.e. that $\mathcal{E} (g_B(x,\cdot),f) = f(x)$, for any function $f$ satisfying $f|_{B^c}=0$.

The function defined by $h(y) := \frac{g_B(x,y)}{g_B(x,x)}$ is harmonic on $B\setminus \{x \}$ and takes
value 1 at $x$ and 0 on $B^c$, hence $R_{\text{eff}}(x,B^c)^{-1} =\mathcal{E}(h,h)$.
Using now the reproducing property mentioned above we get that
\[
R_{\text{eff}}(x,B^c) = g_B(x,x),
\]
a very useful equality that will be widely used in this paper.

If $B$ is a finite subset of $G$ then by spectral theory we can write
\begin{align}
\label{spectral}
q^B_t(x,y ) = \sum_i \lambda_i^t \phi_i(x) \phi_i(y) ,
\end{align}
where $\phi_i$ are the eigenfunctions and $\lambda_i$ the eigenvalues of the associated transition operator.
Since $| \lambda_i| \le 1$ for all $i$ it follows that
\be \label{e:qmon}
   q^B_{2t+1}(x,x) \le q^B_{2t}(x,x) \quad \hbox{ for all } x \in B, \,  t \ge 0.
\ee
Letting $B \uparrow G$ this inequality extends to $q_t$. From \eqref{e:qmon} we obtain
\be \label{e:2gB}
 2 g_B(x,x) \ge 2 \sum_{t=0}^\infty q_{2t}(x,x) \ge  \sum_{t=0}^\infty ( q_{2t}(x,x) +  q_{2t+1}(x,x))
 = g_B(x,x).
 \ee

\subsection{The criterion}
\begin{theorem}
\label{greenkernel}
Let $G$ be a recurrent graph with a distinguished vertex $o$. Let  $(B_r)_r$ be
an increasing  sequence of  sets such that $B_r \neq G, \forall r$, and
$\cup_r B_r = G$. Suppose that there exists $C < \infty$ such that  for all $r$
\begin{align*}
g_{B_r}(x,x) \leq C g_{B_r}(o,o), \quad \text{ for all  } x \in B_r,
\end{align*}
Then $G$ has the infinite collision property. Moreover, the number of edges crossed at the same time by two independent random walks is infinite a.s.
\end{theorem}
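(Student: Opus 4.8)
The plan is to run a first/second-moment argument, but on the number of \emph{edges} crossed simultaneously rather than on vertex collisions. Writing $B=B_r$ and letting $p_t^B$ denote the transition probabilities of the walk killed on leaving $B$, let $M_r$ be the number of times $t$ at which $X$ and $Y$ traverse the same directed edge in the same step, before either walk exits $B_r$. Since $P_o(X_t^B=u,X_{t+1}^B=v)=p_t^B(o,u)/d(u)$ for $v\sim u$, summing over all neighbours $v$ of $u$ gives (writing $\bE_{a,b}$ for expectation under $P_{a,b}$)
\[
\bE_{o,o}[M_r]=\sum_{t\ge0}\sum_{u\in B}\sum_{v\sim u}\Big(\tfrac{p_t^B(o,u)}{d(u)}\Big)^2=\sum_{t\ge0}\sum_{u\in B}q_t^B(o,u)^2 d(u)=\sum_{t\ge0}q_{2t}^B(o,o),
\]
where the last step is the reversibility identity $q_{2t}^B(o,o)=\sum_u q_t^B(o,u)^2 d(u)$. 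By \eqref{e:2gB} this yields the clean bound $\bE_{o,o}[M_r]\ge\tfrac12 g_B(o,o)$. The point of using edges is that the degree weights cancel exactly: the analogous vertex-collision count carries an extra factor $d(u)$ in each term, which cannot be controlled by the hypothesis on unbounded-degree graphs such as Galton--Watson trees. Working with $M_r$ is therefore the key idea that makes the estimates uniform regardless of degrees.

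Next I would bound the second moment. Decomposing $M_r^2$ over ordered pairs of crossing times and applying the Markov property at the step just after the first common crossing (at which instant both walks occupy the same vertex $v$), I expect to obtain
\[
\bE_{o,o}[M_r^2]\le \bE_{o,o}[M_r]\Big(1+2\max_{v\in B}\bE_{v,v}[M_r]\Big),\qquad \bE_{v,v}[M_r]=\sum_{t\ge0}q_{2t}^B(v,v)\le g_B(v,v).
\]
Here the hypothesis enters decisively: $g_B(v,v)\le C\,g_B(o,o)\le 2C\,\bE_{o,o}[M_r]$ for every $v\in B$, so that $\bE_{o,o}[M_r^2]\le C_2\,\bE_{o,o}[M_r]^2$ with $C_2$ depending only on $C$, uniformly in $r$ (once $r$ is large enough that $\bE_{o,o}[M_r]\ge1$). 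Paley--Zygmund then gives $P_{o,o}\big(M_r\ge\tfrac12\bE_{o,o}[M_r]\big)\ge c>0$ with $c$ independent of $r$. This restart estimate is the step I expect to require the most care, since one must track that after a common crossing the two walks genuinely regenerate from a common vertex and that crossings landing outside $B$ only help the upper bound.

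Finally I would upgrade a single crossing to infinitely many using recurrence. Since $G$ is recurrent, $g_{B_r}(o,o)=\Reff(o,B_r^c)\uparrow\infty$ as $B_r\uparrow G$, so $\bE_{o,o}[M_r]\to\infty$. Combining this with the uniform Paley--Zygmund bound, for every $K$ we get $P_{o,o}(M_r\ge K)\ge c$ for all large $r$; as $M_r$ is bounded above by the total number $M$ of simultaneous edge crossings in $G$, this forces $P_{o,o}(M\ge K)\ge c$ for every $K$, hence $P_{o,o}(M=\infty)\ge c>0$. By the zero--one law noted in Remark \ref{zeroone} (the edges crossed by a recurrent walk form a recurrent chain) we conclude $P_{o,o}(M=\infty)=1$, which is precisely the final assertion of the theorem. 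Since a simultaneous edge crossing at time $t$ forces $X_t=Y_t$, infinitely many crossings yield infinitely many collisions, so $P_{o,o}(Z=\infty)=1$, and Proposition \ref{0-1} upgrades this to the infinite collision property at every starting point.
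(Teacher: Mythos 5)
Your proposal is correct and follows essentially the same route as the paper: the paper's proof also counts simultaneous edge crossings of the killed walks (its $\widetilde Z_{B_r}$ is your $M_r$), obtains $\tfrac12 g_{B_r}(o,o)\le E_{o,o}\widetilde Z_{B_r}\le g_{B_r}(o,o)$ and the same restart bound on the second moment, applies Paley--Zygmund uniformly in $r$, and finishes with the zero--one law of Proposition \ref{0-1} and Remark \ref{zeroone}. No substantive differences to report.
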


\proof
Let $(B_r)_r$ be the sequence of sets satisfying the assumptions of the theorem.
Set $B:=B_r$ and let $X^B$ and $Y^B$ be the two random walks killed after exiting the
set $B$, and let $q^B_t$ be their transition densities.
Let $\widetilde Z_B$ count the number of edges that are crossed at the same time by these two random walks, i.e.
\[
\widetilde Z_B = \sum_{t=0}^\infty \1( X^B_t = Y^B_t, X_{t+1} = Y_{t+1}).
\]

To prove the theorem we are going to apply the second moment method to the random variable $\widetilde Z_B$,
so we begin by computing its first and second moments.
For the first moment we have
\begin{align*}
E_{o,o} (\widetilde Z_B)  &= \sum_{t}\sum_{x \in B} \sum_{y\sim x}
     P_{o,o}(X^B_t=Y^B_t=x, X_{t+1} = Y_{t+1} =y)    \\
    &= \sum_{t}\sum_{x \in B} \sum_{y\sim x} q^B_t(o,x)^2 d(x)^2 q_1(x,y)^2 d(y)^2 \\
    &= \sum_{t}\sum_{x \in B} q^B_t(o,x)^2 d(x) =\sum_{t=0}^\infty q^B_{2t}(o,o).
\end{align*}
We therefore have
\be \label{e:zr}
     g_{B}(o,o) \ge    E_{o,o} (\widetilde Z_B)  \ge  \frac12 g_{B}(o,o).
\ee
Observe that since $B_r \neq G$ and $G$ was assumed to be a recurrent graph, we have that $g_{B_r}(o,o)<\infty$.

And for the second moment we have
\begin{align} \nonumber
E_{o,o}(\widetilde Z_B^2) &= E_{o,o}(\widetilde Z_B) +  2 \sum_{t}\sum_{s\geq t+1} \sum_{x \in B} \sum_{y \sim x }
\sum_{z \in B} \sum_{w \sim z }  q^B_t(o,x)^2 d(x)^2 q_1(x,y)^2 d(y)^2  \\
  \nonumber
 & \qquad \qquad \times  q^B_{s-t-1}(y,z)^2 d(z)^2 q_1(z,w)^2 d(w)^2  \\
  \nonumber
 &=   E_{o,o}(\widetilde Z_B) + 2 \sum_{t}\sum_{s\geq t+1} \sum_{x \in B} \sum_{y \sim x }
\sum_{z \in B}  q^B_t(o,x)^2 q^B_{s-t-1}(y,z)^2 d(z)  \\
 \nonumber
&\le    E_{o,o}(\widetilde Z_B) + 2 \sum_{t} \sum_{x \in B} \sum_{y \sim x }  q^B_t(o,x)^2  g_B(y,y) \\
 \label{e:second}
&\le   g_B(o,o)  + 2 g_B(o,o) \max_{y \in B}  g_B(y,y).
\end{align}


Applying the second moment method to the variable $\widetilde Z_{B_r}$,
and using \eqref{e:zr}, \eqref{e:second} and the hypotheses of the theorem we obtain
\begin{align*}
P_{o,o}( \widetilde Z_{B_r} > \frac{1}{2}E_{o,o}( \widetilde Z_{B_r}))
&\geq \frac{1}{4} \frac {(E_{o,o}( \widetilde  Z_{B_r} ))^2 }{E_{o,o}( \widetilde Z_{B_r}^2) }
\ge \frac{  g_B(o,o)} {16 (1 + 2C g_B(o,o))}.
\end{align*}
Since $g_{B_r}(o,o) \ge d(0)^{-1}$, it follows that
$P_{o,o}(\widetilde Z_{B_r} > \frac{1}{4}g_{B_r}(o,o)) \geq c >0$, for all $r>0$.
As $r \to \infty$ we have  $\widetilde Z_{B_r} \nearrow \widetilde Z$,
where $\widetilde  Z$ is the total number of common edges
traversed by $X$ and $Y$.
Letting $r \to \infty$ , we get $P_{o,o}(\widetilde Z = \infty) >c$.
Since $Z \ge \widetilde  Z$, we have $P_{o,o}(Z = \infty) >c$, and
so by the 0-1 law, Proposition \ref{0-1}, we get  $P_{o,o}(Z = \infty) =1$. For the last statement use Remark \ref{zeroone}.
\qed

The proof of \eqref{e:zr} also gives

\begin{lemma} \label{lem:GZB}
Suppose that $d(x) \le D$ for all $x \in G$. Let $Z_B$ be the total number of
collisions of the killed walks $X^B$ and $Y^B$. Then
$$ \frac12 g_B(o,o) \le E_{o,o} Z_B \le D g_B(o,o). $$
\end{lemma}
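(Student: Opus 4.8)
The plan is to reuse the first-moment computation carried out for $\widetilde Z_B$ in the proof of Theorem \ref{greenkernel}, but with the collision indicator $\1(X^B_t = Y^B_t)$ in place of the common-edge indicator. Writing $Z_B = \sum_{t=0}^\infty \1(X^B_t = Y^B_t)$ and taking expectations, I would first expand
\[
E_{o,o}(Z_B) = \sum_{t}\sum_{x \in B} P_{o,o}(X^B_t = Y^B_t = x).
\]
Since $X^B$ and $Y^B$ are independent and $P_o(X^B_t = x) = q^B_t(o,x)\, d(x)$ by the definition of the transition density, this equals $\sum_{t}\sum_{x\in B} q^B_t(o,x)^2 d(x)^2$.

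The key step is the reversibility (Chapman--Kolmogorov) identity $\sum_{x \in B} q^B_t(o,x)^2 d(x) = q^B_{2t}(o,o)$, exactly as used in the theorem. Peeling off one factor of $d(x)$ and writing $E_{o,o}(Z_B) = \sum_t \sum_{x \in B} \big(q^B_t(o,x)^2 d(x)\big)\, d(x)$, I would then bound the surplus degree factor using the hypothesis $1 \le d(x) \le D$ to obtain
\[
\sum_{t} q^B_{2t}(o,o) \;\le\; E_{o,o}(Z_B) \;\le\; D \sum_{t} q^B_{2t}(o,o).
\]

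It remains to compare $\sum_t q^B_{2t}(o,o)$ with $g_B(o,o)$. Splitting $g_B(o,o) = \sum_t q^B_{2t}(o,o) + \sum_t q^B_{2t+1}(o,o)$ and applying the parity monotonicity \eqref{e:qmon} term by term gives $\tfrac12 g_B(o,o) \le \sum_t q^B_{2t}(o,o) \le g_B(o,o)$, exactly as in \eqref{e:2gB}. Combining this with the display above yields $\tfrac12 g_B(o,o) \le E_{o,o}(Z_B) \le D\, g_B(o,o)$, as claimed.

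I do not expect any genuine obstacle here: the argument is a direct variant of the $\widetilde Z_B$ computation, and the only point requiring care is the bookkeeping of the degree factors. The single structural difference is that the collision variable $Z_B$ carries one more factor of $d(x)$ than $\widetilde Z_B$, whose extra degree factors were cancelled by the step to a neighbouring vertex $y$; it is precisely this surplus factor, trapped between $1$ and $D$, that turns the clean identity $E_{o,o}(\widetilde Z_B) = \sum_t q^B_{2t}(o,o)$ of \eqref{e:zr} into a two-sided estimate with the constant $D$ on the upper side.
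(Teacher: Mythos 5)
Your proof is correct and is exactly the argument the paper intends: the paper dispatches this lemma with the single remark that ``the proof of \eqref{e:zr} also gives'' it, and your computation—expanding $E_{o,o}(Z_B)=\sum_t\sum_x q^B_t(o,x)^2 d(x)^2$, isolating the surplus degree factor between $1$ and $D$, and invoking the parity comparison \eqref{e:2gB}—is precisely that adaptation spelled out. No gaps.
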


\subsection{Applications of the Green kernel criterion}

We now give a number of applications of this criterion, and in particular will prove
Theorem \ref{comb}(a) and Theorem \ref{galton}.

\begin{proof}[\textsl{Proof of Theorem \ref{comb} (part \textbf{a})}]
Let $B:=B_r$ denote the set of vertices that are on the right of the origin and at horizontal distance at most $r$ from it
-- see Figure 2 below.
Then $g_{B_r}(0,0) = R_{\text{eff}}(0,B_r^c) = d(0,B_r^c) = r+1$ and since $\alpha \leq 1$ we have that $g_{B_r}(x,x) = R_{\text{eff}}(x,B_r^c) = d(x,B_r^c) \leq r + 1 = g_{B_r}(0,0)$, for any $x \in B_r$.
\end{proof}

\begin{center}
\epsfig{file=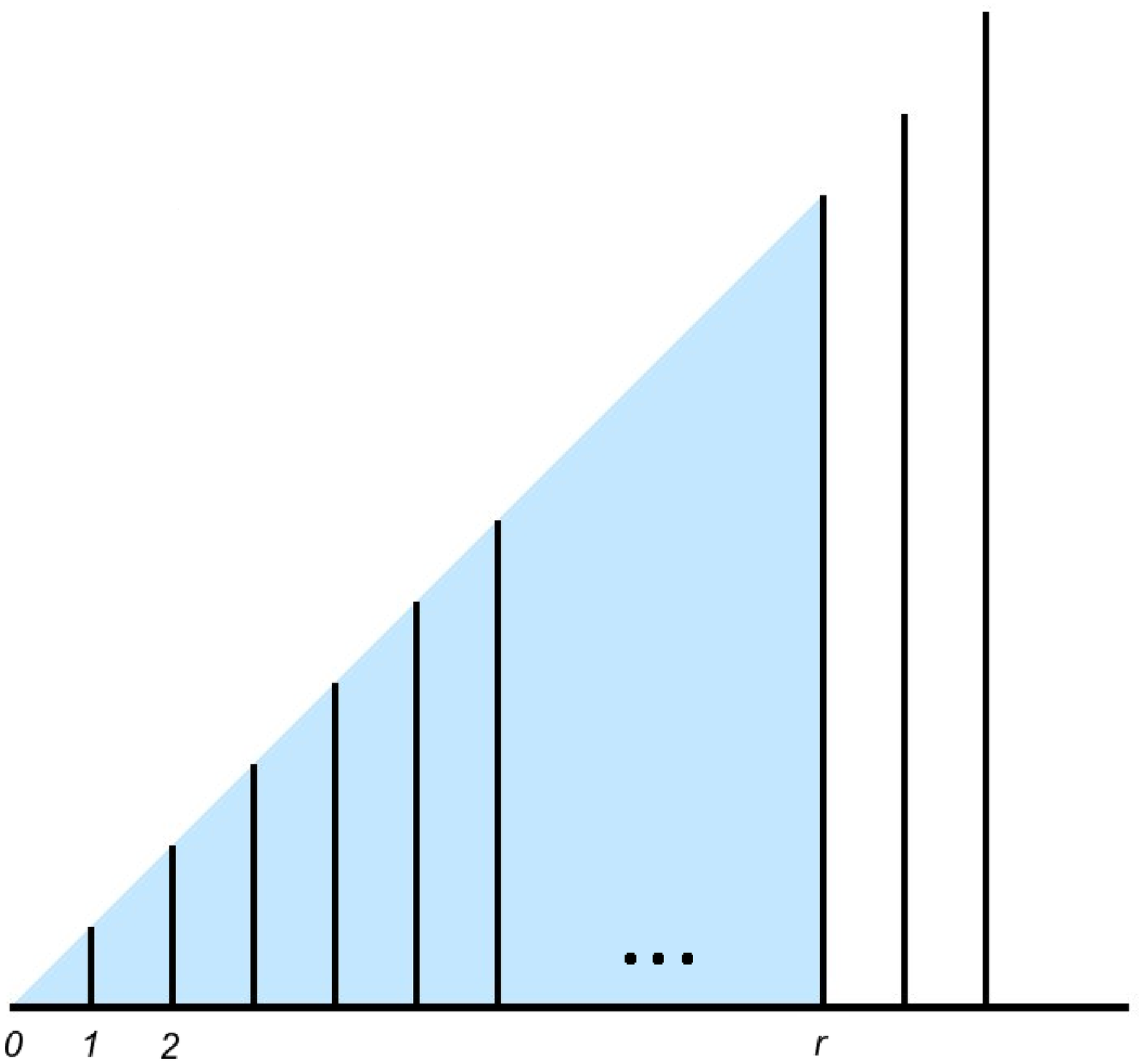,height=6cm}
\\ Figure 2. The set $B_r$ in the wedge comb.
\end{center}

\begin{proof}[\textsl{Proof of Theorem \ref{galton}}(a)]
In this proof we have two types of randomness. We define the Galton-Watson tree
on a probability space $(\Omega, \bP)$, and denote the tree $G(\omega)$, and its root
$o$.  We then write $P^x_\omega$ for the law of the simple random walk in $G(\omega)$
started at $x \in G(\omega)$.

Let us quickly recall the structure of the critical Galton Watson tree with finite variance conditioned
to survive forever. For more details see for instance \cite{Kesten}.
Let $(p_k)$ be the offspring distribution of the critical Galton
Watson tree. Now start with the root $o$. Give it a random number of
offspring which follows the size-biased distribution, i.e.  $P(X=k) =
k p_k$. The random variable $X$ has finite expectation, since the
original distribution $p_k$ has finite variance.  Choose one of its
offspring at random and give it a random number of offspring with the
size-biased distribution independently of before, and to all the others
attach critical Galton Watson trees with the same offspring
distribution $(p_k)$.

From this construction it follows that there is a unique infinite
line of descent, which we call the {\bf backbone} and off the nodes on it
there are critical finite trees emanating.

Let $B_r$ be the set of vertices on the backbone that are at distance at most $r$ from the root,
taken together with all their descendants that are off the backbone -- see Figure 3.

\begin{center}
\epsfig{file=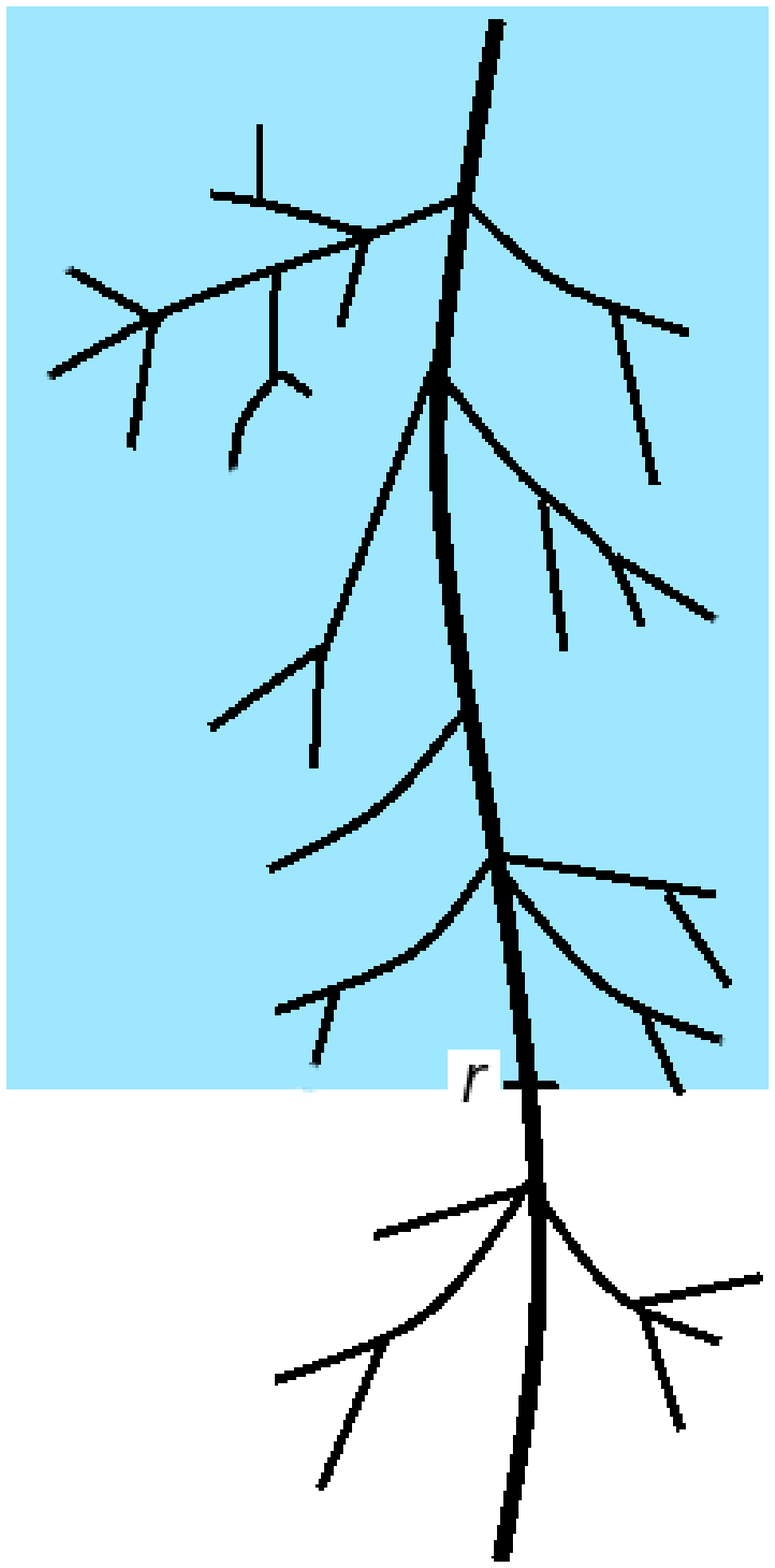,height=6cm}
\\ Figure 3. A Galton-Watson tree with the set $B_r$.
\end{center}

Fix $\epsilon>0$.
Let $N^{\epsilon}_r$ be the number of trees of depth greater than $\frac{r}{\epsilon}$ that are contained in the set $B_r$,
excluding the backbone itself.
If $(Z_n)$ is a critical branching process with finite variance, then Kolmogorov's theorem states that
\begin{align}
\label{Kolmogorov}
P(Z_n>0) \sim \frac{2}{n\sigma^2} \text{ as } n\to \infty.
\end{align}

Let $Y_i$, for $i=0,\cdots,r$, be the number of offspring of the $i$-th vertex on
the backbone excluding the offspring on the backbone.
Then $\bE(Y_i) = \sum_{k=1}^{\infty}k^2p_k  - 1= \sigma^2$.
We label the offspring of the $i-$th vertex on the backbone by $j=1,\cdots, Y_i$ if $Y_i \geq 1$.
Also, we let $T_{i,j}$, for $j=1,\cdots,Y_i$, be the descendant tree of the $j$-th child off
the backbone.
Using \eqref{Kolmogorov} we have
\begin{align*}
\bP(N^{\epsilon}_r \geq 1) \leq \bE(N^{\epsilon}_r)
= \bE\left( \sum_{i=0}^{r} \sum_{j=1}^{Y_i} \1\left(T_{i,j} \text{ has depth }> \frac{r}{\epsilon}\right)\right)
\leq \sum_{i=1}^{r} \bE(Y_i) \left( \frac{c \epsilon}{\sigma^2 r} \right)
\leq C \epsilon.
\end{align*}
So $\bP(N^{\epsilon}_r=0) \geq 1-C\epsilon$ and by Fatou's lemma we have, setting
 $A_\epsilon = \{\omega: N^{\epsilon}_r(\omega)=0 \text{ i.o.} \}$, that
\begin{align*}
\bP(A_\epsilon) =
\bP(N^{\epsilon}_r = 0 \text{ i.o.}) \geq \lim\sup_{r} \bP(N^{\epsilon}_r=0)  \geq 1-C\epsilon.
\end{align*}
%
%
Now $g_{B_r}(o,o) = r+1$, and if $N^{\epsilon}_r=0$ then $g_{B_r}(x,x) \le r + r/\epsilon$
for all $ x \in B_r$.
If $ \omega \in A_\epsilon$ then
applying the Green kernel criterion to the sets $B_r$ with $r$
being such that $N^{\epsilon}_r(\omega)=0$, we get the infinite collision property for the graph $G(\omega)$.
Hence we deduce that
\[ \bP( G(\omega) \text{ has the infinite collision property } ) \geq P(A_\epsilon) \geq 1-C\epsilon, \]
and thus sending $\epsilon \to 0$, we get that
$G$  has the infinite collision property $\bP$-a.s.
\end{proof}


We have the following easy corollary of Theorem \ref{greenkernel}

\begin{corollary} \label{cor:J}
Let $(G(\omega))$ be a family of random graphs (defined on a space $(\Omega, \bP)$)
with a distinguished vertex $o$, and let $B_r = B(o,r)$. For $\lambda\ge 1$ let
$$ J(\lambda) = \{ r \in \Z_+:  \Reff (o, B_r^c) \ge r/ \lambda \}. $$
Suppose that there exists a function $\psi(\lambda)$ with $\lim_{\lambda \to \infty} \psi(\lambda)=0$
and $r_0 \ge 1$ such that
\be \label{e:Jlam}
 \bP( r \in J(\lambda) ) \ge 1 -  \psi(\lambda) \quad \hbox{ for all } r \ge r_0.
 \ee
 Then $G$ has the infinite collision property $\bP$-a.s.
\end{corollary}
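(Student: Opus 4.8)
The plan is to realize this as a direct application of the Green kernel criterion (Theorem \ref{greenkernel}), where the set $J(\lambda)$ records precisely the radii at which the required comparison of Green functions holds. The guiding template is the proof of Theorem \ref{galton}(a): I would fix $\lambda$, show that on a high-probability event the criterion applies along a random exhausting subsequence, and then let $\lambda \to \infty$.

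First I would convert both sides of the criterion into statements about $\Reff$. The identity $g_{B_r}(o,o) = \Reff(o,B_r^c)$ from the background shows that $r \in J(\lambda)$ is exactly the statement $g_{B_r}(o,o) \ge r/\lambda$. For the numerator I would use the deterministic bound $g_{B_r}(x,x) = \Reff(x,B_r^c) \le d(x,B_r^c)$ (the set version of the inequality $\Reff(x,y)\le d(x,y)$, via shorting all of $B_r^c$ to a point), together with the triangle inequality $d(x,B_r^c) \le d(x,o) + d(o,B_r^c) \le r + (r+1) = 2r+1$, valid for every $x \in B_r = B(o,r)$. Hence, on the event $\{r \in J(\lambda)\}$,
\[
\frac{g_{B_r}(x,x)}{g_{B_r}(o,o)} \le \frac{2r+1}{r/\lambda} \le 3\lambda
\qquad \text{for all } x \in B_r \text{ and all } r \ge 1,
\]
so the hypothesis of Theorem \ref{greenkernel} holds at that radius with the fixed constant $C = 3\lambda$, independent of $r$ and of $\omega$.

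Next I would show that $J(\lambda)$ is infinite with probability at least $1-\psi(\lambda)$. Since the indicators $\1_{\{r\in J(\lambda)\}}$ are bounded by $1$, the same reverse Fatou argument used for the event $A_\epsilon$ in the proof of Theorem \ref{galton}(a) gives
\[
\bP\big(r \in J(\lambda) \text{ i.o.}\big) = \bE\Big[\limsup_r \1_{\{r\in J(\lambda)\}}\Big] \ge \limsup_r \bP(r \in J(\lambda)) \ge 1-\psi(\lambda),
\]
using \eqref{e:Jlam}. Call this event $A_\lambda$. For $\omega \in A_\lambda$ the set $J(\lambda)(\omega)$ is infinite, so enumerating it as $r_1 < r_2 < \cdots$ produces an increasing family $(B_{r_k})_k$ with $r_k \to \infty$; since $B(o,r)$ exhausts $G$, so does this subsequence, and by the previous paragraph each $B_{r_k}$ satisfies $g_{B_{r_k}}(x,x) \le 3\lambda\, g_{B_{r_k}}(o,o)$. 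Theorem \ref{greenkernel} then applies to $G(\omega)$ with $C = 3\lambda$, yielding the infinite collision property. Thus $\bP(G \text{ has the infinite collision property}) \ge \bP(A_\lambda) \ge 1-\psi(\lambda)$, and letting $\lambda \to \infty$, so that $\psi(\lambda)\to 0$, forces this probability to equal $1$.

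The only delicate point is recognizing that Theorem \ref{greenkernel} does not need the comparison $g_{B_r}(x,x) \le C\,g_{B_r}(o,o)$ at \emph{every} radius, but only along an increasing exhausting family. Consequently the real content is the infinitude of $J(\lambda)$ (supplied by reverse Fatou), rather than a uniform bound over all $r$. The step I would treat most carefully is the uniformity of the constant: the deterministic estimate $\Reff(x,B_r^c) \le 2r+1$ is what makes $C = 3\lambda$ independent of both $r$ and $\omega$, so that a single $C$ serves the entire random subsequence $\{B_r : r \in J(\lambda)(\omega)\}$.
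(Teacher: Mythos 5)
Your proof is correct and follows essentially the same route as the paper's: the deterministic bound $g_{B_r}(x,x)=\Reff(x,B_r^c)\le 2r+1$, the identity $g_{B_r}(o,o)=\Reff(o,B_r^c)\ge r/\lambda$ on $J(\lambda)$, reverse Fatou to get $\bP(r\in J(\lambda)\text{ i.o.})\ge 1-\psi(\lambda)$, an application of Theorem \ref{greenkernel} along the resulting exhausting subsequence, and finally $\lambda\to\infty$. The only difference is that you spell out the reverse Fatou step and the observation that the criterion needs only an exhausting subsequence of radii, both of which the paper leaves implicit.
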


\begin{proof}
For each $x \in B_r$ we have
$g_{B_r}(x,x) \le d(x,o) + r \le 2r$,
while for each $r \in J(\lambda)$
$$ g_{B_r}(o,o) = \Reff(o, B_r^c) \ge r/\lambda. $$
The condition \eqref{e:Jlam} implies that
$$ \bP( r \in  J(\lambda) \mbox{ \rm for infinitely many }   r ) \ge 1-\psi(\lambda) . $$
If this event holds then Theorem \ref{greenkernel} implies that
$G$ has the infinite collision property.
Letting $\lambda \to \infty$ concludes the proof.
\end{proof}

\begin{proof}[\textsl{Proof of Theorem \ref{galton}} (b) and (c)]
For both of these graphs the condition \eqref{e:Jlam} has been verified. For the
incipient infinite cluster in dimension $d \ge 19$ see the proof of (2.1) at the end of
section 2 of \cite{KN}. For the UST see  Proposition 3.6 of \cite{BM}.
\end{proof}

\medskip
\begin{remark} {\rm
We could also have used Corollary \ref{cor:J} to prove Theorem \ref{galton}(a), since
\cite[Proposition 1.1]{FK} proves that a critical Galton-Watson tree with finite variance
conditioned to survive forever
satisfies the condition \eqref{e:Jlam}. However, we preferred to give a
simple direct proof.
} \end{remark}

We can also use Corollary
 \ref{cor:J} to handle a class of critical Galton-Watson trees with infinite variance.

\begin{corollary} \label{cor:GWinf}
Let $(Z_n)$ be a critical Galton-Watson process with infinite variance such that
$$ E( s^{Z_1}) = s + (1-s)^\alpha L(1-s), $$
where $\alpha \in (1,2]$, and $L(t)$ is slowly varying as $t \to 0$.
Let $T^*$ be the tree associated with the process $(Z_n)$ conditioned to survive forever.
Then $T^*$ has the infinite collision property.
\end{corollary}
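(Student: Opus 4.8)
The plan is to reproduce the structure of the direct proof of Theorem \ref{galton}(a), replacing the finite-variance input by the corresponding stable-regime estimates. First I would invoke the spine decomposition of the conditioned tree $T^*$, which is valid for an arbitrary critical offspring law: $T^*$ consists of an infinite backbone $o = v_0, v_1, v_2, \dots$ together with, at each $v_i$, a size-biased number of children, one of which continues the backbone while the remaining $Y_i \ge 0$ children each root an independent copy of the unconditioned critical Galton--Watson tree (a ``bush''). The variables $(Y_i)_{i \ge 0}$ are i.i.d., and since $\alpha \in (1,2)$ forces $\bE(Z_1^2) = \infty$, the size-biased law has infinite mean, so each $Y_i$ has a regularly varying tail of index $\alpha-1$. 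I would then apply the Green-kernel criterion (Theorem \ref{greenkernel}) with the backbone-based sets $B_r$ consisting of the backbone vertices $v_0,\dots,v_r$ together with all their off-backbone descendants, exactly as in the proof of Theorem \ref{galton}(a). For these sets the only edge leaving $B_r$ is $(v_r,v_{r+1})$, so $\Reff(o,B_r^c) = g_{B_r}(o,o) = r+1$ exactly, with no need to estimate a metric-ball resistance; and for any $x \in B_r$ lying in a bush off $v_i$ one has $g_{B_r}(x,x) = \Reff(x,B_r^c) = d(x,v_{r+1}) \le (r+1) + \max_{i \le r} D_i$, where $D_i$ denotes the maximal height among the bushes attached to $v_i$. (One could instead phrase this through Corollary \ref{cor:J}, but the backbone sets make the resistance computation exact, whereas the metric-ball resistance is delicate because the heavy branching inflates the ball volume to order $r^{\alpha/(\alpha-1)} \gg r$.)

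Thus the whole problem reduces to showing that $\max_{i \le r} D_i = O(r)$ for infinitely many $r$ with probability tending to $1$. Concretely I would aim to prove that for fixed $\lambda$,
\[
\bP\Big( \max_{i \le r} D_i > \lambda r \Big) \le (r+1)\,\bP(D_0 > \lambda r) \le \psi(\lambda),
\]
uniformly in large $r$, with $\psi(\lambda) \to 0$ as $\lambda \to \infty$; Fatou's lemma then gives $\bP(\max_{i \le r} D_i \le \lambda r \text{ i.o.}) \ge 1 - \psi(\lambda)$, and on that event the ratio hypothesis of Theorem \ref{greenkernel} holds along the (exhausting) subsequence $\{r : \max_{i\le r}D_i \le \lambda r\}$ with constant $C = 1 + \lambda$, yielding the infinite collision property. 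Letting $\lambda \to \infty$ finishes the proof. The crux is therefore the single-cluster depth estimate $\bP(D_0 > n) \le C/n$. Conditioning on $Y_0$ and writing $Q_n = \bP(Z_n > 0)$ for the survival probability of one critical bush, I have $\bP(D_0 > n) = \bE\big[1 - (1 - Q_n)^{Y_0}\big]$.

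The main obstacle is precisely this estimate, because $\bE(Y_0) = \infty$, so the naive first-moment bound $\bE(Y_0) Q_n$ that suffices in the finite-variance proof diverges. Instead I would combine two sharp inputs. Slack's theorem, applied to the hypothesis $\bE(s^{Z_1}) = s + (1-s)^\alpha L(1-s)$ (so that $f(s)-s \sim (1-s)^{\alpha} L$ with $\alpha-1 \in (0,1)$), gives $Q_n = n^{-1/(\alpha-1)}\Lambda(n)$ with $\Lambda$ slowly varying; and an Abelian/Tauberian theorem converts the tail index $\alpha-1$ of $Y_0$ into the generating-function asymptotics $1 - \bE\big[(1-q)^{Y_0}\big] \asymp q^{\alpha-1}$ (up to slow variation) as $q \to 0$. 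Substituting $q = Q_n$ collapses the two exponents, since $\big(n^{-1/(\alpha-1)}\big)^{\alpha-1} = n^{-1}$, giving $\bP(D_0 > n) \asymp n^{-1}$ --- the exact analogue of the clean $1/n$ depth tail that holds in the finite-variance case $\alpha=2$, and the reason the position sum $(r+1)\bP(D_0>\lambda r)$ is $O(1/\lambda)$. The genuinely technical point is to show that the two slowly varying corrections combine to something bounded, so that the bound on $\bP(D_0>n)$ is uniform in $n$; this is where the precise form of the generating-function hypothesis and the duality between Slack's survival asymptotics and the size-biased offspring law must be used.
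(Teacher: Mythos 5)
Your proof is essentially correct, but it takes a genuinely different route from the paper's: the paper disposes of this corollary in one line, by citing Lemma 3.1 of \cite{CK} for the verification of the metric-ball resistance condition \eqref{e:Jlam} and then invoking Corollary \ref{cor:J}, whereas you extend the direct spine argument that the paper uses only for the finite-variance case (Theorem \ref{galton}(a)). Your route is sound, and the one step you flag as ``genuinely technical'' --- the cancellation of the slowly varying corrections --- actually closes cleanly and does not require the full strength of Slack's theorem. Indeed, since the generating function of the size-biased offspring number is $s f'(s)$, you have the exact identity $\bP(D_0 > n) = 1 - f'(1-Q_{n+1})$; convexity of $f$ plus the monotone density theorem upgrade $f(s)-s=(1-s)^\alpha L(1-s)$ to $1-f'(1-q)\sim \alpha\, q^{\alpha-1}L(q)$; and the defining recursion $Q_{n+1}=Q_n-Q_n^{\alpha}L(Q_n)$ yields $n\,Q_n^{\alpha-1}L(Q_n)\to \tfrac{1}{\alpha-1}$ by summing increments of $Q_n^{-(\alpha-1)}$. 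Hence $\bP(D_0>n)\sim \tfrac{\alpha}{(\alpha-1)n}$ with the slowly varying factors cancelling identically, which is exactly the $1/n$ depth tail your union bound needs (and it covers $\alpha=2$ with $L\to\infty$ as well, where your parenthetical ``$\alpha\in(1,2)$ forces infinite variance'' is slightly misstated --- infinite variance is part of the hypothesis, not a consequence of $\alpha<2$). The trade-off between the two proofs: the paper's is shorter but outsources the hard estimate, namely a lower bound on $\Reff(o,B(o,r)^c)$ for metric balls, which is delicate precisely because the ball volume is of order $r^{\alpha/(\alpha-1)}\gg r$ here; your backbone sets make $g_{B_r}(o,o)=r+1$ exact and reduce everything to a one-dimensional tail estimate, giving a self-contained proof in the spirit of the paper's own treatment of Theorem \ref{galton}(a), at the cost of importing the survival-probability analysis.
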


\begin{proof} The condition \eqref{e:Jlam} for this tree is proved in
\cite[Lemma 3.1]{CK}.
\end{proof}

We also have that many `fractal' graphs satisfy the infinite collision property. Examples of
graphs of this kind are given in Examples 3 and 4 in Section 5 of \cite{Barlow3}: these include the graphical
Sierpinski gasket -- see Figure 1 in \cite{KN}.
All these graphs have bounded vertex degree, and there exist $\beta\ge 2$ and
$\alpha \in [1, \beta)$ such that for $x, y \in G$, $r \ge 1$
$$ |B(x,r)| \asymp r^\alpha, \qquad \Reff(x,y) \asymp d(x,y)^{\beta-\alpha}. $$
Lemma 2.2 of \cite{Barlow3} then proves that
$$ \Reff(x, B(x,r)^c) \ge c r^{\beta-\alpha}. $$
We therefore have
$$\max_{y \in B(x,r)} \frac{ g_{B(x,r)}(y, y)}{  g_{B(x,r)}(x, x)}= \max_{y \in B(x,r)} \frac{ \Reff(y, B(x,r)^c)}{  \Reff(x, B(x,r)^c)} \le C $$
for all $x \in G$, $r \ge 1$. Hence the hypotheses of Theorem \ref{greenkernel} hold,
and the graph has the infinite collision property.

\section{Supercritical percolation cluster} \label{supercrit}

In this section we are going to give a short proof of the following theorem, which was proved independently in
\cite{Chen2}.

\begin{theorem}
\label{percolation}
Let $X$ and $Y$ be two independent discrete time simple random walks on the infinite supercritical
percolation cluster in $\Z^2$ started from the same point. Then $X$ and $Y$ will collide infinitely many times a.s.
\end{theorem}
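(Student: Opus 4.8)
The plan is to verify the hypotheses of the Green-function criterion, Theorem \ref{greenkernel}, for the supercritical percolation cluster in $\Z^2$. The key point is that the infinite cluster $\mathcal{C}_\infty$, though random and irregular on small scales, behaves like $\Z^2$ on large scales; in particular it is recurrent (being a subgraph of the recurrent $\Z^2$, with degrees bounded by $4$). So I would take $o$ to be a fixed point of $\mathcal{C}_\infty$, let $B_r = B(o,r)$ be the graph balls in $\mathcal{C}_\infty$, and aim to show that almost surely there is a constant $C$ and infinitely many scales $r$ for which $g_{B_r}(x,x) \le C\, g_{B_r}(o,o)$ for all $x \in B_r$. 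Equivalently, using the identity $g_{B_r}(x,x) = \Reff(x, B_r^c)$ recorded in Section \ref{sec:green}, I want $\Reff(x,B_r^c) \le C\,\Reff(o,B_r^c)$ for all $x \in B_r$. This is exactly the form handled by Corollary \ref{cor:J}, so the cleanest route is to verify the condition \eqref{e:Jlam}: that $\bP(\Reff(o,B_r^c) \ge r/\lambda) \ge 1 - \psi(\lambda)$ with $\psi(\lambda) \to 0$.

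\textbf{The main steps.} First I would record the deterministic upper bound: since $\mathcal{C}_\infty$ is a subgraph of $\Z^2$ and effective resistance between a point and the complement of a ball is at most the diameter, $\Reff(x,B_r^c) \le 2r$ for every $x \in B_r$, uniformly. This gives the numerator control in Corollary \ref{cor:J} for free. The substantive step is the matching lower bound on $\Reff(o,B_r^c)$: I would invoke the now-standard quenched heat-kernel and resistance estimates for supercritical percolation in $\Z^2$ (as developed by Barlow and subsequent authors) which give, with overwhelming probability, two-sided Gaussian-type bounds and, correspondingly, $\Reff(o,B_r^c) \asymp r$ up to logarithmic or constant factors on the relevant scales. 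Concretely, these estimates yield a probability bound of the form $\bP(\Reff(o,B_r^c) < r/\lambda) \le \psi(\lambda)$ with $\psi(\lambda) \to 0$, uniformly for $r \ge r_0$, which is precisely \eqref{e:Jlam}. Feeding this into Corollary \ref{cor:J} then immediately yields the infinite collision property $\bP$-almost surely, and since the cluster is recurrent the $0$–$1$ law of Proposition \ref{0-1} upgrades positive probability to probability one.

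\textbf{The hard part} will be justifying the resistance lower bound $\Reff(o,B_r^c) \ge cr$ uniformly in the random environment with good probability. On $\Z^2$ itself the effective resistance to distance $r$ grows like $\log r$, not like $r$, so one cannot expect $\Reff(o,B_r^c) \asymp r$ — the comparison I actually need is only the \emph{relative} bound $\Reff(x,B_r^c) \le C\,\Reff(o,B_r^c)$ for all $x \in B_r$, reflecting that no vertex is dramatically more resistively isolated than the root. Thus the real task is a uniform two-sided control showing the quenched resistance profile is spatially homogeneous at large scales, for which I would cite the percolation heat-kernel estimates rather than reprove them. An alternative and perhaps more direct route, avoiding delicate resistance asymptotics, is to exploit the comparison of $\mathcal{C}_\infty$ with $\Z^2$ via Rayleigh monotonicity together with the isoperimetric/chemical-distance estimates for supercritical clusters, establishing the quenched bound $\max_{x \in B_r} g_{B_r}(x,x) \le C \min_{x \in B_r} g_{B_r}(x,x)$ directly; this is the formulation that most cleanly matches the hypothesis of Theorem \ref{greenkernel} and sidesteps pinning down the exact growth rate of the Green function.
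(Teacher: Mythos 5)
Your plan has a genuine gap at its core step. You correctly observe that Corollary \ref{cor:J} cannot apply here: on the two--dimensional supercritical cluster $\Reff(o,B_r^c)$ grows like $\log r$, not like $r$, so the hypothesis \eqref{e:Jlam} fails outright. But the fallback you propose --- verifying the hypothesis of Theorem \ref{greenkernel} directly, i.e.\ $\max_{x\in B_r} g_{B_r}(x,x)\le C\, g_{B_r}(o,o)$ with $g_{B_r}(o,o)\asymp \log r$ --- is exactly where the difficulty lies, and ``cite the percolation heat-kernel estimates'' does not close it. Those estimates (Theorem \ref{bounds}) are local and come with a random threshold $S_x$: the Gaussian bounds on $q_t(x,\cdot)$ only hold for $t\ge S_x$, and $S_x$ has merely stretched-exponential tails in $\Z^2$. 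Over the $\asymp r^2$ vertices of $B_r$ the maximum of $S_x$ is superlogarithmic in $r$, so for the worst vertices one has no control of $g_{B_r}(x,x)$ beyond something of order $\max_x S_x$, which can exceed $C\log r$ for every fixed $C$. Thus the uniform comparison required by Theorem \ref{greenkernel} is not delivered by the tools you invoke, and it is not even clear that it holds; your deterministic bound $g_{B_r}(x,x)\le 2r$ is far too weak since the denominator is only $\log r$. The second-moment inequality in the proof of Theorem \ref{greenkernel} genuinely needs $\max_y g_B(y,y)$, so a bound holding only for ``most'' $x$ does not suffice without modifying the argument.

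The paper's proof takes a different route precisely to dodge this: it never controls all vertices of a ball. Instead it counts only collisions at \emph{good} vertices, $Z_k=\sum_{t=k_0}^{k}\sum_{|x-a|_1<\sqrt t}\1(x,t)\1(S_x\le B)$, uses the ergodic theorem to show that for $B$ large the good vertices have density close to $1$ in $B(a,\sqrt t)$, and then runs the second-moment method directly on $Z_k$ using the two-sided bound $q_t^\omega(a,x)\asymp t^{-1}e^{-c|x-a|_1^2/t}$: this gives $E(Z_k)\ge c\log k$ and $E(Z_k^2)\le (B+c\log k)^2$, hence $P(Z_k>\tfrac12 c\log k)\ge c>0$, and the $0$--$1$ law finishes. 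If you want to rescue your approach, you would need either to prove the uniform resistance bound $\max_{x\in B_r}\Reff(x,B_r^c)\le C\log r$ for infinitely many $r$ (a nontrivial statement about atypical points of the cluster), or to reformulate the criterion so that a small exceptional set of vertices is allowed --- at which point you are essentially reproducing the paper's good-vertex truncation.
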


\proof
We use the upper and lower bounds on the transition
density $q_t(x, y)$ as given in \cite{Barlow2}, Theorem 1. The bounds in \cite{Barlow2}
are proven for a continuous time simple random walk, but it is remarked there that the same
bounds also hold for the discrete time walk. While no details are given in \cite{Barlow2},
Section 2 of \cite{BarlowHambly} outlines the changes needed to run the proofs of
\cite{Barlow2} in discrete time.
For convenience we state these bounds here:

\begin{theorem}
\label{bounds}
Let $p > \frac{1}{2}$. There exists $\Omega_1$ with $P_p(\Omega_1) = 1$ and r.v.
$S_x,x \in \Z^2$, such that $S_x(\omega) <\infty$ for each $\omega \in \Omega_1$, $x \in C_{\infty}(\omega)$.
There exist constants $c_i = c_i(d, p)$ such that for $x, y \in C_{\infty}(\omega),t \geq 1$ with
\[
S_x(\omega)\vee |x - y|_1 \leq t,
\]
the transition density $q^{\omega}_t(x, y)$ of $Y$ satisfies
\be  \label{e:scc_est}
c_1 t^{-1} \exp(-c_2|x -y|^2_1
/t) \leq  q^{\omega}_t(x, y)\leq c_3 t^{-1} \exp(-c_4|x -y|^2_1/t).
\ee
\end{theorem}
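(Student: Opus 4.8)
The plan is to derive the two-sided Gaussian estimate \eqref{e:scc_est} from two robust geometric features of the cluster -- regular volume growth and a Poincar\'e inequality, both valid at large scales -- and then to invoke the general equivalence, available for random walks on weighted graphs, between volume doubling together with a scale-$r^2$ Poincar\'e inequality and matching two-sided Gaussian heat kernel bounds (the graph versions due to Delmotte and to Grigor'yan--Telcs). The essential difficulty is that $C_\infty$ does \emph{not} satisfy these conditions on small scales: near any vertex the cluster has bottlenecks and holes, so the volume of a box is comparable to $R^d$ and the Poincar\'e constant is of the correct order only once $R$ exceeds a vertex-dependent, a.s.\ finite random scale. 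The random variables $S_x$ are precisely there to record this crossover: the strategy is to prove the deterministic estimate on the event that the cluster ``looks like $\Z^d$'' in a box of side of order $t$ around $x$, and then to show this event has full measure with the right uniformity in $x$.

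First I would assemble the geometric inputs, each holding with a stretched-exponential tail so that Borel--Cantelli applies. (i) \emph{Volume regularity}: writing $\rho = P_p(0 \in C_\infty)$, concentration estimates for the number $|Q(x,R)\cap C_\infty|$ of cluster points in the box of side $R$ about $x$ give $c R^d \le |Q(x,R)\cap C_\infty| \le C R^d$ for all $R \ge S_x^{(1)}$. (ii) \emph{Metric comparison}: the Antal--Pisztora bound gives $d_\omega(x,y) \le C|x-y|_1$ once $|x-y|_1 \ge S_x^{(2)}$, where $d_\omega$ is the graph (chemical) distance; this is what lets me replace the intrinsic metric by $|x-y|_1$ in the final estimate. (iii) \emph{Poincar\'e inequality}: on boxes of side $R \ge S_x^{(3)}$ the cluster supports
\[
\sum_{y \in Q(x,R)\cap C_\infty} (f(y)-\bar f\,)^2 \le C R^2 \sum_{\substack{y \sim z \\ y,z \in Q(x,2R)\cap C_\infty}} (f(y)-f(z))^2 ,
\]
with $\bar f$ the average of $f$ over the box, which follows from isoperimetric (surface-to-volume) estimates for crossing clusters in large boxes. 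Setting $S_x = S_x^{(1)} \vee S_x^{(2)} \vee S_x^{(3)}$ and taking the full-measure event $\Omega_1$ on which all three hold for every $x \in C_\infty$ and every admissible $R$, I obtain a single a.s.\ finite random scale with the properties required in the statement.

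With these inputs, the remaining work is analytic. I would run the Delmotte--Grigor'yan--Telcs machinery to pass from (VD)+(PI) to the parabolic Harnack inequality and then to the two Gaussian bounds, taking care that \emph{every} ball invoked has radius at least the relevant $S_x$, so that the deterministic hypotheses are genuinely in force; this is exactly why the conclusion is restricted to $t \ge S_x \vee |x-y|_1$. For the upper bound, the Sobolev/Nash inequality equivalent to (VD)+(PI) yields the on-diagonal decay $q^\omega_t(x,x) \le c\,t^{-d/2}$, giving the $t^{-1}$ prefactor when $d=2$, which is then upgraded to the full off-diagonal form $c_3 t^{-1}\exp(-c_4|x-y|_1^2/t)$ by the Davies perturbation (or Carne--Varopoulos) method, with $|x-y|_1$ substituted for $d_\omega(x,y)$ via (ii). For the lower bound I would take the near-diagonal lower estimate furnished by the Harnack inequality and chain it along a sequence of overlapping balls joining $x$ to $y$, the number of which is controlled by $d_\omega(x,y) \le C|x-y|_1$.

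The step I expect to be the main obstacle is (iii), the Poincar\'e inequality with the correct $R^2$ scaling. Volume regularity is essentially a first-moment and concentration statement, but the Poincar\'e constant is genuinely sensitive to the geometry of holes: one must rule out thin necks pinching the crossing cluster, so that its isoperimetric profile matches that of $\Z^d$ up to constants once the box is large, and one needs this \emph{with tails good enough for Borel--Cantelli} rather than merely in probability. A further delicate point is that the inequality is naturally proved on a slightly enlarged box (to absorb boundary effects), so matching the scales of $Q(x,R)$ and $Q(x,2R)$ while keeping each $S_x$ finite a.s.\ demands the careful bookkeeping carried out in \cite{Barlow2}.
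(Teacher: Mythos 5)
The paper does not prove this statement at all: Theorem \ref{bounds} is quoted from \cite{Barlow2} (Theorem 1 there), with the remark that the discrete-time adaptations are outlined in Section 2 of \cite{BarlowHambly}. So there is no internal proof to compare against; the fair comparison is with the cited source, and at the level of strategy your sketch reconstructs it correctly: volume regularity of boxes, the Antal--Pisztora comparison of chemical and $\ell^1$ distances, and a scale-$R^2$ Poincar\'e inequality, all valid above an a.s.\ finite random scale $S_x$ with tails good enough for Borel--Cantelli, followed by Nash/Davies-type upper bounds and a chained near-diagonal lower bound. You also correctly identify the Poincar\'e inequality with the right tails as the hardest geometric input.

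Two caveats. First, you cannot invoke the Delmotte or Grigor'yan--Telcs equivalence as a black box: those theorems require volume doubling and the Poincar\'e inequality \emph{at all scales}, and they fail below the random scale. In \cite{Barlow2} the entire analytic machinery is rerun by hand on ``good'' and ``very good'' balls, and the bookkeeping you defer is the bulk of the proof --- note in particular that the chaining argument for the lower bound passes through intermediate vertices $z$ whose own random scales must be controlled, so one needs exceptional scales attached to whole space-time regions, not merely to the endpoint $x$; this is why the proof cannot simply condition on $t \ge S_x \vee |x-y|_1$ and quote a deterministic theorem. Second, the cluster is a subgraph of $\Z^2$ and hence bipartite, so in discrete time $q^\omega_t(x,y)=0$ whenever $t-|x-y|_1$ is odd, and the lower bound in \eqref{e:scc_est} cannot hold literally as stated; the correct discrete-time formulation carries a parity restriction or bounds $q^\omega_t+q^\omega_{t+1}$ from below, which is precisely the content of the adaptation in \cite{BarlowHambly}. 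Your sketch (like the paper's own statement, to be fair) glosses over this, and it matters in principle since the application in Theorem \ref{percolation} sums $q^\omega_t(a,x)^2$ over $t$, where the parity-restricted version must be used.
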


We fix a configuration $\omega$. We are going to work with the probability measure $P_{\omega}$.
Let $ a \in C_{\infty}(\omega)$.  We define
\[
Z_k =  \sum_{t=k_0}^{k}\sum_{|x-a|_1<\sqrt{t}} \1(x,t) \1(S_x(\omega) \leq B),
\]
where the indicator $\1(x,t)$ means that there is a collision at time $t$ at position $x$, i.e. that $X_t=Y_t=x$.
In order to use the bounds given in Theorem \ref{bounds} we set $k_0 = S_a \vee 2(|a|)^2$.
As in the previous section we will use the second moment method, so we need bounds on
the first and second moments of $Z_k$. We have
\begin{align} \nonumber
E^{\omega}_{a,a}(Z_k) &=
 E^{\omega}_{a,a}\left(\sum_{t=k_0}^{k}\sum_{|x-a|_1<\sqrt{t}} \1(x,t) \1(S_x(\omega) \leq B) \right) \\
  \label{mesitimi}
&\geq \sum_{t=k_0}^{k}\sum_{|x-a|_1<\sqrt{t}, S_x(\omega) \leq B} q^\omega_t(a,x) q^\omega_t(a,x).
\end{align}
By the ergodic theorem we have that
\begin{align*}
\frac{\# \{x \in C_{\infty}: |x-a|_1 \leq \sqrt{t}, S_x \geq B \}}{4t} \to P_p(x \in C_{\infty}, S_x \geq B), \text{ as } t \to \infty,\text{ a.s. }
\end{align*}

So $\forall \omega \in \Omega_2$ with $P(\Omega_2)=1$ we have that
\[
\frac{\# \{x \in C_{\infty}: |x-a|_1 \leq \sqrt{t}, S_x(\omega) \geq B \}}{4t} \leq \delta_B <1, \text{ ev. },
\]
where the strict inequality follows from the fact that the probability $P_p(x \in C_{\infty}, S_x \geq B) \to 0$ as $B \to \infty$.
We now fix a $B$ with $B \ge S_a$ such that $\delta_B <1$.
We now use the estimate \eqref{e:scc_est} to bound from below the
transition density in \eqref{mesitimi},  and deduce that
\begin{align*}
E^{\omega}_{a,a}(Z_k) \geq \sum_{t=k_0}^{k} c \frac{1}{t} > c' \log{k}.
\end{align*}
We now bound the second moment of $Z_k$.
Note first that by \eqref{e:scc_est}
\begin{align*}
  \sum_{s=0}^{k} \sum_{|y-a|_1<\sqrt{t}} q^{\omega}_{s}(x,y)^2   &\le   \sum_{s=0}^{k} \sum_y q^{\omega}_{s}(x,y)^2 d(y) \\
  &\le  \sum_{s=0}^{k} q^{\omega}_{2s}(x,x)
  \le S_x +  \sum_{s=S_x}^{k} c s^{-1} \le S_x + c \log k.
\end{align*}

Then
\begin{align*}
E^{\omega}_{a,a}(Z_k^2) &\leq 2 E^{\omega}_{a,a}\left( \sum_{t=k_0}^{k}
 \sum_{l=t}^{k}\sum_{|x-a|_1<\sqrt{t}}\sum_{|y-a|_1<\sqrt{l}} \1(x,t) \1(y,l)
    \1(S_x(\omega) \leq B)\1(S_y(\omega) \leq B)\right)  \\
& \le c \sum_{t=k_0}^{k}\sum_{l=t}^{k}\sum_{|x-a|_1<\sqrt{t}}\sum_{|y-a|_1<\sqrt{l}}
   \1(S_x(\omega) \leq B)\1(S_y(\omega) \leq B) q_t^{\omega}(a,x)^2 q_{l-t}^{\omega}(x,y)^2 \\
&\leq c \sum_{t=k_0}^{k}\sum_{|x-a|_1<\sqrt{t}} \1(S_x(\omega) \leq B)
 q^{\omega}_t(a,x)^2 \sum_{l=t}^{k}\sum_{|y-a|_1<\sqrt{l}} q^{\omega}_{l-t}(x,y)^2 \\
 &\leq c \sum_{t=k_0}^{k}\sum_{|x-a|_1<\sqrt{t}}    q^{\omega}_t(a,x)^2  \1(S_x(\omega) \leq B) ( S_x + c \log k )
\le (B + c \log k)^2.
\end{align*}
Applying the second moment method to the random variable $Z_k$ we obtain that
\[
P^{\omega}_{a,a}(Z_k > \frac{1}{2}c \log{k}) \geq \frac{1}{4}\frac{(E^{\omega}_{a,a}(Z_k))^2}
  {E^{\omega}_{a,a}(Z^2_k)} \geq c>0.
\]
Taking limits in the above inequality as $k \to \infty$, we obtain that $P^{\omega}_{a,a}(Z = \infty) >c$,
and so by Proposition \ref{0-1}  we have $P_{a,a}(Z=\infty) =1$.
\qed

\section{Wedge comb with $\alpha >1$} \label{sec:wedge}

In Section \ref{sec:green}
we proved that wedge combs with profile $f(x)=x^\alpha$ where
$\alpha \le 1$ have the infinite collision property. In this section we will prove
Theorem \ref{comb}(b), that is that if $\alpha >1$ then the wedge comb has the finite collision property.
We do not have any simple general criterion for the finite collision property, and our proofs will
rely on making sufficiently accurate estimates of the transition density $q_t(x,y)$.

For $x \in G$ we write $x_1$ for the first coordinate of $X$.

Throughout this section we set
$$ \alpha' = \alpha \wedge 2, \quad \beta'= \frac{1+\alpha'}{2+\alpha'}. $$
Note that if $\alpha \ge 2$ then $\alpha'=2$, $\beta'=3/4$.

The main work in this section will be in proving the following.

\begin{lemma} \label{lem:qtbound}
Let $x = (k,h) \in G$. The transition density $q$ satisfies:
\begin{equation}
 q_t(0, x) \leq
\begin{cases}
\frac{c}{t^{\beta'}} \quad   & \hbox{ if }  t \ge k^{2 +\alpha'} , \\
\frac{c}{ (k^{2 + \alpha'})^{\beta'}}  \quad  & \hbox{ if }  t < k^{2 +\alpha'} .
\end{cases}
\end{equation}
\end{lemma}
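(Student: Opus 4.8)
The plan is to read this bound as a (sub-Gaussian) heat-kernel upper estimate for a walk whose walk dimension is $d_w = 2+\alpha'$: the first line is the near-diagonal decay $q_t \lesssim t^{-\beta'}$ valid once $t$ exceeds the natural time $k^{d_w}=k^{2+\alpha'}$ needed to reach column $k$, and the second line is the value of that bound frozen at the crossover time, which dominates the small-$t$ regime. So I would first establish an on-diagonal bound and then transport it off the diagonal.

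For the on-diagonal bound I would exploit the tree structure and the resistance identity $\Reff(x,B^c)=g_B(x,x)$ already used for Theorem \ref{comb}(a). Taking $B=B_r$ to be the horizontal slab (columns within distance $r$, together with their complete teeth), the teeth are dead ends for these cuts, so $g_{B_r}(x,x)=\Reff(x,B_r^c)$ is comparable to the horizontal distance from $x$ to the slab boundary, hence $\lesssim r$. The monotonicity \eqref{e:qmon} gives, as in \eqref{e:2gB}, that $\sum_{s\le t}q^{B_r}_{2s}(x,x)\le g_{B_r}(x,x)$ with $q^{B_r}_{2s}(x,x)$ decreasing in $s$, whence $q^{B_r}_{2t}(x,x)\le g_{B_r}(x,x)/t\lesssim r/t$. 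Choosing $r\asymp t^{1/(2+\alpha')}$ — the largest slab the walk has not yet typically left — turns this into $q_t(x,x)\lesssim t^{-\beta'}$, provided the killed and free densities agree up to that time.

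The appearance of $\alpha'=\alpha\wedge 2$ is the heart of the matter, and is where I expect the main difficulty. The naive time scale read off from the mean exit time $\Reff\cdot\pi(B_r)\asymp r^{2+\alpha}$ is wrong for $\alpha>2$: it is inflated by exponentially rare excursions that descend a tooth of length $k^\alpha$ all the way, and these do not influence $q_t$. On the relevant time scale the walk can probe a tooth only to the vertical diffusive depth $\asymp\sqrt t$, so a tooth at column $k$ behaves as if it had length $\min(k^\alpha,\sqrt t)$; feeding this capped profile into the volume/time-scale computation produces the exponent $2+(\alpha\wedge 2)$ rather than $2+\alpha$. Making this rigorous — controlling the contribution of deep excursions and showing that, with the capped profile, the walk does typically remain in $B_r$ up to time $r^{2+\alpha'}$, uniformly in the base point — is the technical core. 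I would carry it out by analysing the additive functional that time-changes the horizontal simple random walk into the comb walk, estimating the tooth-excursion clock through the tail $P(\text{excursion}>s)\asymp s^{-1/2}$ truncated at $(k^\alpha)^2$.

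Finally I would pass off the diagonal, using that the graph is a tree so that $(k,0)$ is the unique gateway to the tooth, which allows a first/last-passage decomposition separating the horizontal hitting cost from the vertical motion and reducing a general point $(k,h)$ to the spine estimate. For near-diagonal (reachable) points I would use Cauchy--Schwarz, $q_t(0,y)\le\sqrt{q_t(0,0)\,q_t(y,y)}$, together with the on-diagonal bound at both endpoints; for $y=(k,h)$ high in a tooth one first waits the time $\asymp h^2$ needed to return to the base, after which the on-diagonal bound applies. This yields the first line $q_t(0,x)\le c\,t^{-\beta'}$. For the second line ($t<k^{2+\alpha'}$), reaching column $k$ carries a sub-Gaussian cost in the walk dimension $2+\alpha'$, namely a factor $\exp\!\big(-c(k^{2+\alpha'}/t)^{1/(1+\alpha')}\big)$; multiplying the on-diagonal $t^{-\beta'}$ by this factor and using that the exponential beats any power of $t/k^{2+\alpha'}$ collapses the bound to its crossover value $(k^{2+\alpha'})^{-\beta'}=k^{-(1+\alpha')}$.
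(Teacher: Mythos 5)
Your plan follows essentially the same route as the paper: the on-diagonal bound $q_t(x,x)\le c t^{-\beta'}$ is obtained from $q_t(x,x)\le 2g_B(x,x)/\bigl(tP_x(\tau_B\ge t)\bigr)$ with $B$ a horizontal slab of width $\asymp t^{1/(2+\alpha')}$ (Lemmas \ref{lem:GComp} and \ref{claim1}); the exponent $\alpha'=\alpha\wedge 2$ enters exactly through your capped tooth-excursion clock, which the paper implements as a Bernoulli-experiments-plus-Chernoff bound on the exit time (Lemma \ref{lem:hit-tail}); off-diagonal points are treated by Cauchy--Schwarz together with a first-passage decomposition at the tooth base (Lemma \ref{lem:offh}); and the small-time regime is the on-diagonal bound multiplied by a stretched-exponential exit-time factor and optimized over $t/k^{2+\alpha'}$ (Lemma \ref{smalltimes2}). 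The only discrepancy is that you assert the sharp sub-Gaussian exponent $1/(1+\alpha')$ where the paper proves, and only needs, the cruder exponent $1/3$; this is harmless since any stretched exponential beats the power.
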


Before we prove this Lemma, we will show how it leads easily to Theorem \ref{comb}(b).

We define the set $Q_{k,h}$, where $h \leq k^{\alpha}$, as follows:
\[
Q_{k,h} = \{ (k,y): 0\leq y \leq h  \},
\]
and we set $Z_{k,h}= Z(Q_{k,h})$ to be the number of collisions of the two random walks
in $Q_{k,h}$.
We also define $\tilde{Z}_{k,h} = Z_{k,2h/3} - Z_{k,h/3}$, i.e. the number of collisions that happen in the
set $\{(k,y): \frac{h}{3} \leq y \leq \frac{2h}{3}\}$.
\begin{center}
\epsfig{file=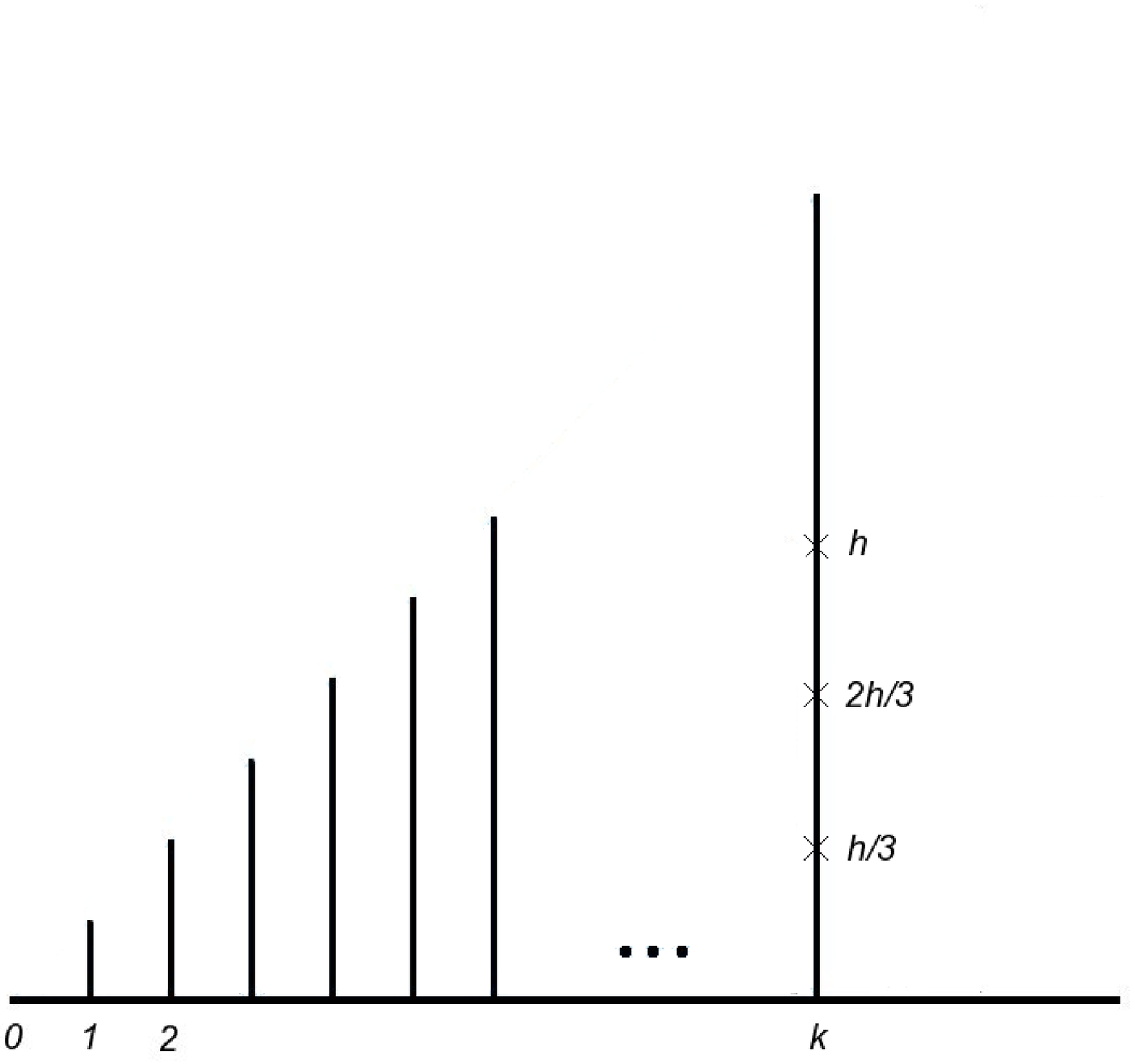,height=5cm}
\\ Fig 4
\end{center}

\begin{lemma}  \label{lem:wedge}
\begin{description}
\item[(a)]   $ E(Z_{k,h}) \leq c h k^{-\alpha'} $.
\item[(b)]  $ E(Z_{k,h}|\tilde{Z}_{k,h}>0) \geq ch $.
\end{description}
\end{lemma}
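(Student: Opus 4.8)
The plan is to handle the two parts by different methods: part (a) is a first-moment computation driven by Lemma \ref{lem:qtbound}, and part (b) is a strong Markov restart combined with the Green-function lower bound of Lemma \ref{lem:GZB}.

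For (a), I would work under $P_{0,0}$ (by the 0-1 law the base point is irrelevant). By independence,
\[
E(Z_{k,h}) = \sum_t \sum_{0 \le y \le h} P_0(X_t=(k,y))^2 = \sum_t \sum_{0\le y \le h} q_t(0,(k,y))^2\, d((k,y))^2 .
\]
The degrees in the wedge comb are bounded (by $3$), so it suffices to bound $\sum_t q_t(0,(k,y))^2$ for each fixed height $y$ and then multiply by the $\le h+1$ choices of $y$. Crucially, the bound in Lemma \ref{lem:qtbound} depends only on the first coordinate $k$, hence is uniform in $y \in [0,h]$. Splitting the $t$-sum at $T=k^{2+\alpha'}$, the short-time block contributes $\le T\cdot c\,(k^{2+\alpha'})^{-2\beta'}=c\,(k^{2+\alpha'})^{1-2\beta'}$, while the long-time tail is $\sum_{t\ge T}c\,t^{-2\beta'}\le c\,T^{1-2\beta'}$, convergent because $2\beta'>1$ whenever $\alpha'>0$. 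The identity $1-2\beta'=-\alpha'/(2+\alpha')$ collapses both contributions to $c\,k^{-\alpha'}$, whence $E(Z_{k,h})\le c\,h\,k^{-\alpha'}$.

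For (b), I would restart the process at the first collision recorded by $\tilde Z_{k,h}$. Let $\sigma$ be that time; at $\sigma$ both walks occupy a common vertex $(k,y_0)$ with $h/3\le y_0\le 2h/3$. Conditioning on $\sigma$ and on $(k,y_0)$ and invoking the strong Markov property, it remains to show $E_{(k,y_0),(k,y_0)}(\#\{\text{collisions in }Q_{k,h}\})\ge ch$ uniformly in $y_0\in[h/3,2h/3]$. Take $B$ to be the tooth segment $\{(k,y):|y-y_0|\le h/3\}$, which lies inside $Q_{k,h}$ precisely because $y_0\in[h/3,2h/3]$. Collisions of the $B$-killed walks occur inside $Q_{k,h}$, so the post-$\sigma$ collision count in $Q_{k,h}$ dominates $Z_B$; Lemma \ref{lem:GZB} then gives $E_{(k,y_0),(k,y_0)}(Z_B)\ge \tfrac12\, g_B((k,y_0),(k,y_0))=\tfrac12\, R_{\text{eff}}((k,y_0),B^c)$. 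Since $B$ is a path, hence a tree, and $(k,y_0)$ sits at graph distance $h/3$ from $B^c$, this effective resistance equals $h/3$, yielding the bound $\ge h/6$.

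I expect the main obstacle to be making the conditioning in (b) fully rigorous: one must decompose over the value of $(k,y_0)$ and the stopping time $\sigma$, verify via the strong Markov property that the post-$\sigma$ collisions inside $Q_{k,h}$ genuinely dominate the confined count $Z_B$, and confirm that the uniform lower bound $h/6$ therefore passes to the conditional expectation $E(Z_{k,h}\mid\tilde Z_{k,h}>0)$. Part (a), by contrast, is routine once the exponent arithmetic $1-2\beta'=-\alpha'/(2+\alpha')$ and the convergence condition $2\beta'>1$ are checked.
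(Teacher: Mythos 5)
Your proposal is correct and follows essentially the same route as the paper: part (a) is the identical first-moment computation splitting the time sum at $k^{2+\alpha'}$ using Lemma \ref{lem:qtbound}, and part (b) is the same restart at the first collision in the middle third of the tooth followed by the Green-function/effective-resistance lower bound (the paper kills the walks on exiting all of $Q_{k,h}$ rather than your centered sub-segment, an immaterial difference). The only slip is that the effective resistance from the centre of your path $B$ to $B^c$ is the parallel combination of the two arms, roughly $h/6$ rather than $h/3$, which affects only the constant.
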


\begin{proof} (a) 
By Lemma \ref{lem:wedge} we have
$$ E(Z_{k,h}) = \sum_{t} \sum_{x \in Q_{k,h} } q_t(0,x)^2
= \sum_{t < k^{2+ \alpha'}} h \frac{c}{k^{2(1+\alpha')}}   + \sum_{t \geq k^{2+ \alpha'}}  \frac{ch}{t^{2\beta'} }
\leq \frac{ch}{k^{\alpha'}}.  $$

(b) 
Since we are conditioning on the event $\{\tilde{Z}_{k,h}>0\}$,
there is a collision at position $x=(k,y)$ for some $y$ with $\frac{h}{3} \leq y \leq \frac{2h}{3}$.
Conditioned on this event, the total number of collisions that happen in the set $Q_{k,h}$
will be greater than the number of collisions that take place before the first time that one of the
random walks exits this interval. So, setting $B := Q_{k,h}$, and using \eqref{e:zr} we have
\[
E(Z_{k,h}| \tilde{Z}_{k,h}>0) \geq \frac12 g_{Q_{k,h}}(x,x) =\frac12  R_{\text{eff}}(x,Q_{k,h}^c)\geq c h.
\]
\end{proof}

\begin{proof} [\textsl{Proof of Theorem \ref{comb}(b).}]
By Lemma \ref{lem:wedge}
\[
chk^{-\alpha'} \geq E(Z_{k,h}) \geq P(\tilde{Z}_{k,h}>0) E(Z_{k,h}|\tilde{Z}_{k,h}>0) \geq h P(\tilde{Z}_{k,h}>0),
\]
so that $P(\tilde{Z}_{k,h}>0) \leq k^{-\alpha' }$.
Now summing over all $k$ and over all $h$ ranging over powers of 2 and satisfying $h\leq k^{\alpha}$, we get that
\[
\sum_{k}\sum_{h \text{ powers of 2}} P(\tilde{Z}_{k,h}>0)
\le \sum_{k} \log_2(k^{\alpha}) k^{-\alpha'} < \infty, \text{ since $\alpha'>1$.}
\]
Hence by Corollary \ref{cor:fsets} the total number of collisions is finite almost surely.
\end{proof}

Before we prove  Lemma \ref{lem:qtbound} we give
some \textbf{\textsl{heuristics}} for the bound $E(Z_{k,h}) \leq c h k^{-(\alpha \wedge 2)}$:

The expected time that the random walk takes to reach $k$ on the horizontal axis started
from 0 is of the order $k^{2+\alpha}$. The reason for that is that the expected number of visits by the first coordinate to $i \in
\Z_+$ before hitting $k$ for the first time is $k-i$. At every such visit the walk makes a vertical excursion, which takes time
of order $i^{\alpha}$ in expectation. Hence the total time has expectation which is of order $k^{2+\alpha}$.
This is the right order of the expected time for all $\alpha >1$. The actual time though differs in the
two regimes $1<\alpha <2$ and $\alpha >2$.

The first coordinate makes $k^2$ steps to go from $\frac{k}{2}$ to $k$. When  $1<\alpha <2$, at each step of the horizontal
coordinate we perform an independent experiment. We succeed in each experiment, if we spend time greater than
$k^{2\alpha}$ on the tooth in this step of the first coordinate. The probability of success is then lower bounded by
$\frac{c_1} {k^{\alpha}}$ and in the $k^2$ experiments with high probability there will be a success and the expected number
of successes is $k^{2-\alpha}$, thus the total time taken to reach $k$ will be of order $k^{2+\alpha}$.

When $\alpha >2$ the experiments described above will give us no success with high probability, and so this
method no longer gives us the right order for the hitting time.
In this regime instead we declare a success if we spend time greater than $k^4$ on the tooth. The
expected number of successes is then 1 and thus the total time to reach $k$ is of order $k^4$.

Thus the relevant times that will contribute to the expectation of $Z_{k,h}$ will be of order $k^{2+\alpha'}$.
The probability that the two random walks will have the same horizontal coordinate will be $\left(\frac{1}{k}\right)^2$ and the
probability that they will be at the right height will be $\left(\frac{h}{k^{\alpha'}}\right)^2$ and at the same height will be
$\frac{1}{h}$. We get the uniform distribution, because by that time the random walks will have mixed.

Putting all things together in the formula for the expectation we obtain the aforementioned expression.

The remainder of this section is devoted to the proof of Lemma  \ref{lem:qtbound}.
Our main tool to bound $q_t(0,x)$ will be by comparison with Greens functions.

\begin{lemma} \label{lem:GComp}
Let $B \subset G$. Then
\begin{equation} \label{e:qb1}
 q_t(x,x) \le \frac{ 2g_B(x,x)} { t P_x(\tau_B \geq t) }.
\end{equation}
\end{lemma}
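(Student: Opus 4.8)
The plan is to translate the analytic statement into one about expected occupation times and then reduce it, by two soft monotonicity arguments, to a single confinement inequality that I would prove by an excursion decomposition.

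First I would reformulate. Writing $V$ for the number of visits of a single walk to $x$ strictly before it exits $B$, we have $g_B(x,x)\,d(x)=E_x[V]$, and by the strong Markov property $E_x[V]=1/q$, where $q=P_x(\tau_B<T_x^+)$ is the probability of leaving $B$ before returning to $x$ (and $T_x^+$ is the first return time to $x$). Since $q_t(x,x)=P_x(Y_t=x)/d(x)$, the asserted bound is equivalent to
\[
\tfrac{t}{2}\,P_x(\tau_B\ge t)\,P_x(Y_t=x)\ \le\ E_x[V].
\]

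Second, I would strip off the time $t$ on the left using the monotonicity behind \eqref{e:qmon}. The spectral argument shows $q^B_{2s}(x,x)$ is non-increasing in $s$, and letting $B\uparrow G$ this passes to the full recurrent chain, so $q_{2s}(x,x)$ is non-increasing in $s$. Hence, for even $t$, every even $s<t$ satisfies $q_s(x,x)\ge q_t(x,x)$, and summing the $t/2$ such terms gives $\tfrac t2 q_t(x,x)\le\sum_{s=0}^{t-1}q_s(x,x)$ (the odd case follows from $q_{2s+1}\le q_{2s}$, which is \eqref{e:qmon} for the full chain). Multiplying by $d(x)$, this reads $\tfrac t2 P_x(Y_t=x)\le E_x[N_t]$, where $N_t=\#\{0\le s<t:Y_s=x\}$. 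It therefore suffices to prove the occupation inequality
\[
P_x(\tau_B\ge t)\,E_x[N_t]\ \le\ E_x[V].
\]

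The remaining and genuinely delicate step is this last inequality, whose content is that confining the walk to $B$ does not decrease its local time at $x$. To see the structure cleanly: on $\{\tau_B\ge t\}$ every visit to $x$ before time $t$ is a visit before $\tau_B$, so $N_t\le V$ there, giving $E_x[N_t;\tau_B\ge t]\le E_x[V]$; thus it would be enough to show $\cov_x\!\big(N_t,\1(\tau_B\ge t)\big)\ge0$, i.e. that making many returns to $x$ and staying inside $B$ are positively correlated. The main obstacle is that this correlation fails term by term — for a fixed $s$, $P_x(Y_s=x,\tau_B\ge t)$ need not dominate $P_x(Y_s=x)\,P_x(\tau_B\ge t)$ — so it must be obtained globally. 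I would do this by decomposing the walk into its successive excursions from $x$: these are i.i.d., each leaving $B$ with probability $q$, so $V$ is geometric with mean $1/q$, while $P_x(\tau_B\ge t)$ is bounded by the probability that the excursions completed before time $t$ all remain in $B$. Feeding this into the elementary bound $n(1-q)^n\le 1/q$ should yield $P_x(\tau_B\ge t)\,E_x[N_t]\le 1/q=E_x[V]$. The point requiring real care is the coupling between the number of excursions finished before time $t$ and their durations, together with the excursion straddling time $t$; controlling this dependence is where the work lies — the factor $2$ in the statement having already been spent on the monotonicity step.
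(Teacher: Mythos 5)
Your first reduction coincides with the paper's: the spectral monotonicity of $q_{2j}(x,x)$ gives $\tfrac t2\,q_t(x,x)\le\sum_{j=0}^{t}q_j(x,x)=g_t(x,x)$, and the lemma then follows once one shows $P_x(\tau_B\ge t)\,g_t(x,x)\le g_B(x,x)$, which in your occupation-time language is exactly $P_x(\tau_B\ge t)\,E_x[N_t]\le E_x[V]$. So you have isolated the right target inequality. The gap is that you do not prove it. Your route passes through the correlation statement $\cov_x\big(N_t,\1(\tau_B\ge t)\big)\ge 0$, which you rightly note cannot be obtained termwise, and you then sketch an excursion decomposition in which $P_x(\tau_B\ge t)$ is to be compared with the probability that all excursions completed before time $t$ stay in $B$, to be combined with $n(1-q)^n\le 1/q$. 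But the number of excursions completed by time $t$ is itself random and is strongly coupled to their confinement (conditioning on staying in $B$ can shorten excursions and inflate $N_t$, which pushes in the dangerous direction), and you explicitly defer ``controlling this dependence.'' That deferred control is the entire content of the step; as written the argument is a plan rather than a proof, and it is not clear the correlation inequality is the right thing to aim for at all.

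The inequality you need has a two-line proof that bypasses any correlation statement. Split the visits to $x$ before time $t$ into those occurring before $\tau_B$ and those occurring after. The first group contributes at most $E_x[V]=g_B(x,x)\,d(x)$. The second group is nonempty only on $\{\tau_B<t\}$, and on that event, applying the strong Markov property at the first return to $x$ after $\tau_B$, its conditional expectation is at most $E_x[N_t]$ again (a fresh window of length at most $t$ started at $x$). Hence
\[
E_x[N_t]\ \le\ E_x[V]+P_x(\tau_B<t)\,E_x[N_t],
\]
and rearranging gives $P_x(\tau_B\ge t)\,E_x[N_t]\le E_x[V]$, i.e.\ $P_x(\tau_B\ge t)\,g_t(x,x)\le g_B(x,x)$, which is precisely the renewal inequality used in the paper. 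If you prefer your excursion picture, the same argument reads: stop counting returns at the first excursion that leaves $B$; everything afterwards is dominated by an independent copy of $N_t$, so no positive association between $N_t$ and $\{\tau_B\ge t\}$ is ever needed.
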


\begin{proof}
The spectral  decomposition \eqref{spectral} shows that $q_{2j}(x,x)$ is decreasing as a function of $j$,
and also that $q_{2j+1}(x,x) \le q_{2j}(x,x)$ for $j \ge 0$.
Using this it is easy to verify that
\begin{align} \label{td}
q_t(x,x) \leq \frac{2}{t}\sum_{j=0}^{t} q_j(x,x).
\end{align}
We now define $g_t(x,x)$ to be the Green kernel until time $t$, i.e. $g_t(x,x) = \sum_{j=0}^{t} q_j(x,x)$.
By the strong Markov property
\[
g_t(x,x) \leq g_{B}(x,x) + P(\tau_{B} < t) g_t(x,x),
\]
where $\tau_{B}$ is the first exit time from the set $B$; rearranging gives \eqref{e:qb1}.
\end{proof}

To use this lemma we wish to choose the set $B$ so that
the Green kernel up to time $t$ and
 the Green kernel of the Markov chain killed after exiting the set $B$ are comparable.
To obtain the necessary bounds on the exit times from the region $B$
we now make precise some of the heuristics given above.





\begin{lemma} \label{lem:hit-tail}
(a) Let $k \ge 0$, $k_1 \ge 1$ and $T= \tau_{H(k-k_1, k+ k_1)}$ be the first exit of $X$ from
$H(k-k_1, k+ k_1)$, where $H(a,b):= \{ (x,y) \in G: a \le x \le b \}$. Then
\begin{align} \label{e:THbnd}
 P_k( T \le t ) \le c \exp( -c (k_1^{2 + \alpha'}/t)^{1/3} ).
\end{align}
(b) Let $k \ge 1$ and $T= \tau_{H(0, k)}$. Then
 \begin{align*}
 P_0( T \le t ) \le c \exp( -c (k^{2 + \alpha'}/t)^{1/3}).
\end{align*}
\end{lemma}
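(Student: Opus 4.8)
The plan is to analyse the exit time $T$ through the embedded horizontal motion. Since a horizontal step of $X$ can occur only when $X$ sits on the axis $\{y=0\}$, and there the two horizontal neighbours are chosen symmetrically, the successive horizontal positions of $X$ form a simple random walk $(S_j)$ on $\Z$ with $S_0=k$. Writing $\sigma_1<\sigma_2<\cdots$ for the times of the horizontal steps and $\xi_j=\sigma_j-\sigma_{j-1}$, each increment $\xi_j$ equals $1$ plus the durations of the (geometrically many) excursions that $X$ makes up the tooth at $S_{j-1}$ before stepping horizontally. If $N=\min\{j:|S_j-k|>k_1\}$ is the number of horizontal steps needed to leave the strip, then $T=\sigma_N=\sum_{j=1}^N\xi_j$. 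Part (b) is the same statement with the strip centred at the origin, so I would prove (a) and read off (b) by taking $k=0$, $k_1=k$.

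I would bound $P_k(T\le t)$ by splitting on the number of horizontal steps: for a parameter $A$,
\[
 P_k(T\le t)\le P_k(N\le A)+P_k(T\le t,\,N>A).
\]
The first term is a fast-exit probability for the skeleton walk $S$; by the reflection principle and the Gaussian tail, $P_k(N\le A)=P_k(\max_{j\le A}|S_j-k|\ge k_1)\le c\exp(-ck_1^2/A)$, which is useful precisely when $A\ll k_1^2$. For the second term, note that on $\{N>A\}$ the first $A$ horizontal steps all take place inside the strip, so $\sum_{j=1}^A\xi_j\le T\le t$. Each up-excursion of $X$ from the base of a tooth is a first return to $0$ of a one-dimensional walk, whose duration $D$ has the tail $P(D\ge s)\asymp s^{-1/2}$ for $s$ up to the square of the tooth height (beyond which the reflecting top forces exponential decay).

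Fix a time threshold $D_0$ and call an excursion \emph{deep} if it lasts at least $D_0$. If a constant fraction of the first $A$ horizontal steps occur at teeth of height $\ge ck_1^\alpha$, then — as long as $D_0\le ck_1^{2\alpha}$ — the number of deep excursions made in this time stochastically dominates a $\bin(\Theta(A),c/\sqrt{D_0})$ variable. Since deep excursions are disjoint in time and each costs at least $D_0$, the event $\{\sum_{j\le A}\xi_j\le t\}$ forces at most $t/D_0$ of them; when $t\le cA\sqrt{D_0}$ this lies below half the mean, and a lower-tail Chernoff bound for the binomial gives $P_k(T\le t,\,N>A)\le\exp(-cA/\sqrt{D_0})$.

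It then remains to optimise over $A$ and $D_0$. When feasible I would take $A\asymp(k_1^2t)^{1/3}$, which balances the two exponents $k_1^2/A$ and $A/\sqrt{D_0}$ after saturating $\sqrt{D_0}\asymp t/A$; both become of order $(k_1^4/t)^{1/3}$. The sole constraint is $\sqrt{D_0}\le ck_1^\alpha$: for $\alpha\ge2$ this is harmless and yields exactly $(k_1^{2+\alpha'}/t)^{1/3}$ with $\alpha'=2$, while for $\alpha<2$ the unconstrained optimum already gives the stronger $(k_1^4/t)^{1/3}$ at small $t$, and for the larger values of $t$ one must cap $\sqrt{D_0}=ck_1^\alpha$, $A\asymp t/k_1^\alpha$, producing the even smaller bound $\exp(-ck_1^{2+\alpha}/t)$; since we only care about $t\le k_1^{2+\alpha'}$, this dominates the required $\exp(-c(k_1^{2+\alpha'}/t)^{1/3})$. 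Thus the cube-root exponent is simply the worst case over $\alpha$, and the cap at the tooth height is exactly what turns $\alpha$ into $\alpha'=\alpha\wedge2$. I expect the main obstacle to be the assertion that a constant fraction of the first $A$ horizontal steps land on tall teeth: this is immediate when $k\ge2k_1$ (every tooth in the strip then has height $\ge k_1^\alpha$), but in general it requires an occupation-time estimate showing the skeleton walk $S$, run for $A\ (\ll k_1^2)$ steps without leaving the strip, spends a positive proportion of its time in the tall sub-interval $\{x:|x|\ge k_1/2\}$. Controlling this local time, and making the excursion-duration tail and the binomial lower-tail estimate uniform across the varying tooth heights, is the technical heart of the argument.
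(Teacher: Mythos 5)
Your architecture coincides with the paper's proof of this lemma: pass to the horizontal skeleton walk, split $P_k(T\le t)$ according to whether the number of horizontal steps before exit is below a threshold $A$ (bounding that term by the Gaussian tail $\exp(-ck_1^2/A)$), dominate $T$ on the complementary event by $D_0$ times a binomial counting ``deep'' tooth-excursions of duration $\ge D_0$ with success probability $\asymp D_0^{-1/2}$, apply a lower-tail Chernoff bound, and optimize $A$ and $D_0$; the cap $\sqrt{D_0}\lesssim k_1^{\alpha}$ imposed by the tooth height is indeed what produces $\alpha'=\alpha\wedge 2$ and the $1/3$ exponent. All of this matches the paper, with only cosmetic differences in parametrization ($\lambda=k_1^2/A$, $\theta^2 k_1^{2\alpha'}=D_0$).

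However, the step you flag as the ``technical heart'' is a genuine gap, and the fix you sketch would not work. The occupation-time estimate you propose is false: for $k$ small (in particular $k=0$, which is exactly how you propose to deduce (b)) and $t\ll k_1^{2+\alpha'}$, your optimal $A$ satisfies $A\ll k_1^2$, so the skeleton walk run for $A$ steps from the origin typically stays within distance $O(\sqrt{A})=o(k_1)$ of the origin and spends essentially \emph{no} time in $\{|x|\ge k_1/2\}$; no local-time bound of the kind you describe can hold for the unconditioned walk. The paper avoids this entirely by two reductions. First, it reduces to $k_1\le k$: when $k_1>k$ the strip is $H(0,k+k_1)$ and the walk must hit $(k+k_1)/2$ before exiting, so by the strong Markov property one may restart there with a recentred strip (similarly, (b) is deduced from (a) by first running the walk to $2k/3$, not by setting $k=0$ and $k_1=k$). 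Second, it counts only the horizontal steps made before leaving the \emph{half-width} strip $H(k-k_1/2,k+k_1/2)$, inside which every tooth has height at least $(k/2)^\alpha\ge c\,k_1^{\alpha'}$ once $k_1\le k$; then \emph{every} horizontal step is a valid independent experiment and no occupation-time control is needed. With this reduction in place your argument closes; without it, the binomial domination has no lower bound on its success probability near the origin.
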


\begin{proof}
Note that (b) follows from (a) by just looking at the random walk from the first
hit on $2k/3$.

Suppose we have \eqref{e:THbnd} when $k_1 \le k$.
Then if $k_1 > k$ we have $H(k-k_1, k+k_1) = H(0, k+k_1)$,
and $\frac{k+k_1}{2} \ge k$. Then since $X$ has to hit $\frac{k+k_1}{2}$ before it leaves $H(0, k+k_1)$, we have
$$  P_k( T \le t ) \le P_{\frac{k+k_1}{2}}( T \le t ) \le c \exp( -c (k_1^{2 + \alpha'}/t)^{1/3} ). $$
Thus it is sufficient to consider the case when $k_1 \le k$.

We now prove (a) in the case when $k_1 \leq k$.  Let $L$
be the number of horizontal steps that the random walk makes until it leaves $H= H(k-k_1/2, k+k_1/2)$.
Choose constants $\lambda >0$ and $\theta \le \frac14$. We have
\begin{align} \label{axis}
P_k( T \le t) \leq P_k( L < k_1^2/  \lambda ) + P_k(T  \le t,  L \ge k_1^2/  \lambda ) .
\end{align}
The first probability appearing on the right hand side of \eqref{axis} is bounded above by
the probability that a simple random on $\Z_+$ travels distance
$k_1/2$ in less than $k_1^2/ \lambda$ steps, which is smaller than $c' \exp(- c'' \lambda)$.

To bound the second probability we are going to perform $N=k_1^2/  \lambda$ independent experiments.
In each experiment we succeed if we hit level $\theta k_1^{\alpha'}$ on the tooth, and then
spend time at least $\theta^2 k_1^{2 \alpha'}$ in the tooth before the next horizontal step.
(The conditions $\theta \le \frac14$ and $k_1/2 \le k/2$ ensure that there is enough room in each tooth.)
Since a simple random walk on $\bZ \cap [0,n]$ started at $m \le n$ has probability at least $c_1$ of taking
more than $m^2$ steps to hit zero, the probability of success for each experiment is at least
$p= c_1/ (\theta k_1^{\alpha'})$.
Thus on the event $\{  L \ge N \}$ we have that $T$ stochastically dominates
$\theta^2 k^{2 \alpha'} \bin(  k_1^2/\lambda , p)$.

Hence
\begin{align} \label{experiment}
P_k( T  \le t,  L \ge k_1^2/  \lambda )
\le P\left(\text{Bin}( k_1^2/  \lambda, p)  \le \frac{t}{ \theta^2 k_1^{2 \alpha'} } \right)
 =  P\left(\text{Bin}( N, p)  \le t'  \right),
\end{align}
where $t' = t / (  \theta^2 k_1^{2 \alpha'})$.

By a straightforward application of Chernoff's bound we have:
\begin{lemma}\label{lem:Ch}
Let $\mu<1$. Then there exists a positive constant $\mu'$ such that
\begin{align*}
P(\bin(n,p) \leq \mu np) \leq e^{-\mu' np}.
\end{align*}
\end{lemma}

Now write $t = \gamma k_1^{2 + \alpha'}$. If $\gamma^{2/3} > (8 c_1)^{-1}$ then by adjusting the constants $c$ the
bound \eqref{e:THbnd} holds. We can therefore assume that  $\gamma^{2/3} \le (8 c_1)^{-1}$.
Let $ \lambda = \gamma^{-1/3}$, and $ \theta = (2/c_1)  \gamma \lambda= (2/c_1) \gamma^{2/3}$;
note that we have $\theta \le \frac14$.
Then if
$$ \mu =  \frac{t'}{Np} =  \frac{\lambda t }{ c_1 \theta k_1^{2+\alpha'}} =
 \frac{\eps \lambda }{c_1 \theta} = \frac12, $$
Lemma \ref{lem:Ch} gives
\begin{equation} \label{e:Tb2}
 P_k( T  <t,  L \ge k_1^2/  \lambda )  \le e^{-c Np} \le
 \exp( -c k_1^{2-\alpha'} (c_1^2/ 2 ) \gamma^{-1/3} )
  \le e^{-c' \gamma^{-1/3} }.
 \end{equation}
Thus both terms in \eqref{axis} are bounded by terms of the form $c \exp(-c' \gamma^{-1/3})$.
\end{proof}

%



\begin{lemma}\label{claim1}
$q_t(x,x) \le \frac{c}{t^{\beta'}}$ for any $x=(k,0)$ on the horizontal axis and $t\ge 1$.
\end{lemma}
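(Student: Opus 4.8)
The plan is to apply the Green-function bound of Lemma \ref{lem:GComp} with a suitable horizontal slab $B$ centred on $x$. Writing $x=(k,0)$, I take
\[
 B = H\big(\max(0,k-k_1),\,k+k_1\big), \qquad k_1 = \big\lceil (\Lambda t)^{1/(2+\alpha')}\big\rceil ,
\]
where $\Lambda$ is a large constant to be fixed. (The left cap at the origin makes this coincide with the slab $H(k-k_1,k+k_1)$ used in Lemma \ref{lem:hit-tail} when $k_1>k$.) Lemma \ref{lem:GComp} gives
\[
 q_t(x,x) \le \frac{2\,g_B(x,x)}{t\,P_x(\tau_B \ge t)},
\]
so it suffices to establish (i) a lower bound $P_x(\tau_B\ge t)\ge \tfrac12$, and (ii) an upper bound $g_B(x,x)\le c\,t^{1/(2+\alpha')}$. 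Combining these yields $q_t(x,x)\le c\,t^{1/(2+\alpha')}/t = c\,t^{-\beta'}$, since $\beta' = (1+\alpha')/(2+\alpha') = 1-\tfrac{1}{2+\alpha'}$.

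For ingredient (i) I apply Lemma \ref{lem:hit-tail}(a): since $k_1^{2+\alpha'}\ge \Lambda t$,
\[
 P_x(\tau_B \le t) \le c\exp\!\big(-c\,(k_1^{2+\alpha'}/t)^{1/3}\big) \le c\exp\!\big(-c\,\Lambda^{1/3}\big).
\]
Choosing $\Lambda$ large enough that the right-hand side is at most $\tfrac12$ gives $P_x(\tau_B\ge t)\ge \tfrac12$, uniformly in $k$ and $t$.

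For ingredient (ii) I use that the wedge comb is a tree, together with the identity $g_B(x,x)=\Reff(x,B^c)$ recorded in the background section. The vertex $(k+k_1+1,0)$ lies in $B^c$, so by Rayleigh monotonicity (grounding all of $B^c$ can only lower the resistance relative to grounding a single exterior vertex) and the tree identity $\Reff(u,v)=d(u,v)$,
\[
 g_B(x,x) = \Reff(x,B^c) \le \Reff\big(x,\{(k+k_1+1,0)\}\big) = d\big(x,(k+k_1+1,0)\big) = k_1+1 .
\]
Since $t\ge 1$ we have $k_1+1\le c\,t^{1/(2+\alpha')}$, giving (ii). Note that only the right boundary point is used, so no case distinction on the sign of $k-k_1$ is needed here.

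Feeding both bounds into Lemma \ref{lem:GComp} gives $q_t(x,x)\le 4(k_1+1)/t\le c\,t^{-\beta'}$, as claimed. The substantive input is the sub-Gaussian exit-time estimate of Lemma \ref{lem:hit-tail}; once that is available the argument is short, and the only point needing care is the choice of scale $k_1\asymp t^{1/(2+\alpha')}$, which simultaneously makes the slab wide enough for the walk to stay inside with probability bounded below and narrow enough that $g_B(x,x)$ is of order $t^{1/(2+\alpha')}$. The tree structure renders the resistance bound essentially immediate, so I do not expect it to pose an obstacle.
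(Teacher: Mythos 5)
Your argument is correct and is essentially the paper's own proof: both choose a horizontal slab $B=H(k-k_1,k+k_1)$ of width $k_1\asymp t^{1/(2+\alpha')}$, bound $g_B(x,x)=\Reff(x,B^c)\le c\,k_1$ via the tree structure, control $P_x(\tau_B\ge t)$ by Lemma \ref{lem:hit-tail} with the constant chosen large, and feed these into Lemma \ref{lem:GComp}. The only cosmetic difference is that you spell out the resistance bound through Rayleigh monotonicity to a single exterior vertex, which the paper leaves implicit.
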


\proof
Let $k_1 = b t^{1/(\alpha' +2 )}$, where $b \ge 1$ is a constant which will be chosen later.
We use Lemma \ref{lem:GComp} with
$$ B = H(k-k_1, k+k_1) = \{ (x,y) \in G: k - k_1  \leq x \leq k + k_1 \}. $$
Then $\Reff(x, B^c) \le c k_1$. By Lemma \ref{lem:hit-tail}
\begin{equation} \label{e:t11}
P_k( \tau_B < t) \le  c_1 \exp( -c_2 (k_1^{2 + \alpha'}/t)^{1/3} )
= c_1  \exp(-c_2 b^{(2+\alpha')/3} ).
\end{equation}
Taking $b$ large enough, the right side of \eqref{e:t11} can be made less than $1/2$.
Hence by Lemma \ref{lem:GComp}
$$ q_t(x,x) \le c t^{-1} \Reff(x, B^c) \le c t^{-1} k_1 \le c' t^{-\beta'}. $$
\qed

\begin{lemma} \label{lem:offh}
$q_t(0,(k,h)) \le c t^{-\beta' } e^{-\frac{h^2}{c't}}$, for all points $(k,h)$ and all times $t>0$.
\end{lemma}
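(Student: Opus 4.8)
The plan is to combine the on-diagonal bound of Lemma~\ref{claim1} (which will supply the polynomial factor $t^{-\beta'}$) with a one-dimensional ballot estimate for the vertical motion on the tooth over column $k$ (which will supply the Gaussian factor $e^{-h^2/c't}$). The structural fact that makes this work is that $(k,0)$ is a cut vertex separating the tooth above column $k$ from the rest of $G$, so every path from $0$ to $(k,h)$ with $h\ge 1$ must pass through $(k,0)$; this is what lets us decouple the horizontal and vertical contributions, and it is also why only $h^2$, rather than $(k+h)^2$, appears in the exponent.

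First I would record the on-axis off-diagonal estimate $q_s(0,(k,0)) \le c\, s^{-\beta'}$ for $s\ge 1$. For even $s$ this follows from the identity $\sum_z q_{s/2}(x,z)^2 d(z)=q_s(x,x)$ together with Cauchy--Schwarz, giving $q_s(0,(k,0)) \le \sqrt{q_s(0,0)\,q_s((k,0),(k,0))}$, and then applying Lemma~\ref{claim1} at the two \emph{axis} points $0$ and $(k,0)$ (the endpoints must be on the axis, since that is where Lemma~\ref{claim1} holds). The odd case is handled by the same spectral Cauchy--Schwarz argument, using $|\lambda_i|^s \le |\lambda_i|^{2\lfloor s/2\rfloor}$ in \eqref{spectral} to reduce to the even on-diagonal bound.

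Next I would set up a last-exit decomposition at the base $(k,0)$: writing $L$ for the last time $\le t$ at which $X$ visits $(k,0)$, the strong Markov property gives
$$ P_0(X_t=(k,h)) = \sum_{s} P_0(X_s=(k,0))\, P_{(k,0)}\big(X_{t-s}=(k,h),\ X\text{ does not return to }(k,0)\big). $$
After time $s$ the walk steps onto the tooth and performs a simple random walk on $\{1,\dots,f(k)\}$, killed on return to $0$ and reflected at the top. The key input is the ballot-type bound
$$ P_{(k,0)}\big(X_v=(k,h),\ X\text{ avoids }(k,0)\big) \le \frac{c\,h}{v^{3/2}}\, e^{-h^2/(cv)}, $$
where the extra factor $h/v$, compared with the naive $v^{-1/2}$ local CLT bound, comes precisely from conditioning the vertical walk to avoid the base (reflection principle); the reflecting boundary at the top of the tooth contributes only image terms at distance $\ge h$ from the base and so does not spoil the estimate.

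Finally, inserting these two inputs and using $d((k,h))\le 2$ yields
$$ q_t(0,(k,h)) \le c\sum_{s=1}^{t-1} s^{-\beta'}\,\frac{h}{(t-s)^{3/2}}\,e^{-h^2/(c(t-s))}, $$
which I would estimate by splitting the sum at $s=t/2$. On $\{s\le t/2\}$ one has $t-s\asymp t$, so $\sum_{s\le t/2}s^{-\beta'}\le c\,t^{1-\beta'}$, and the elementary inequality $\tfrac{h}{\sqrt t}\,e^{-h^2/ct}\le c\,e^{-h^2/c't}$ produces the desired $c\,t^{-\beta'}e^{-h^2/c't}$. On $\{s> t/2\}$ one has $s^{-\beta'}\le c\,t^{-\beta'}$ and $t-s<t/2$, so half of the exponent gives $e^{-h^2/c't}$ while the residual sum $\sum_{u\ge1}\tfrac{h}{u^{3/2}}e^{-h^2/(cu)}$ is bounded uniformly in $h$ (by comparison with $\int_0^\infty w^{-1/2}e^{-w}\,dw$). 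I expect the ballot estimate to be the main obstacle: the $h/v$ gain from avoiding the base must be retained, since without it the summation over $s$ loses a factor $t^{1/2}$ and the bound fails outright; cleanly accounting for the reflecting top of the tooth via images is the secondary technical point.
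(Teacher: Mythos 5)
Your proof is correct and is, up to time reversal, essentially the paper's own argument: the paper applies reversibility to write $q_t(0,(k,h))=q_t((k,h),0)$ and then performs a first-passage decomposition at the base of the tooth, $q_t((k,h),0)\le\sum_{s}P_h(T_0^{f(k)}=s)\,q_{t-s}((k,0),0)$, which is exactly the reversal of your last-exit decomposition; both arguments then feed in the ballot theorem for the vertical walk and the on-axis bound of Lemma~\ref{claim1} (via the Cauchy--Schwarz step $q_u(x,y)\le\sqrt{q_u(x,x)q_u(y,y)}$, which you rightly make explicit and the paper leaves implicit), and both conclude by splitting the convolution sum at $t/2$ and absorbing $h\,e^{-h^2/(c't)}\le c\sqrt{t}\,e^{-h^2/(2c't)}$. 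The one genuine divergence is the point you yourself flag as the main obstacle, the reflecting top of the tooth: rather than proving a pointwise ballot estimate for the walk reflected at height $f(k)$ by the method of images, the paper sidesteps this entirely by noting that $T_0$ for the unrestricted walk on $\Z$ stochastically dominates $T_0^{f(k)}$ for the walk on $[0,f(k)]$, and that the test function $\psi(s)=c(t-s)^{-\beta'}$ is increasing in $s$, so $\sum_s P_h(T_0^{f(k)}=s)\psi(s)\le\sum_s P_h(T_0=s)\psi(s)$ and only the classical ballot theorem on $\Z$ is ever invoked. Your image argument should also go through (the image sources sit at distance at least $h$ from the base, as you say), but the domination-plus-monotonicity trick is cleaner and is available verbatim in your last-exit formulation once you rewrite the taboo probability by reversibility as $P_h(T_0^{f(k)}=v)$ times a bounded degree ratio.
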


\proof
Let $T_0$ be the first hitting time of 0 for a simple random walk on $\Z$.
Using the ballot theorem (see, e.g. \cite{Ballot}) we get
\[
P_h(T_0 = s) = \frac{h}{s} P_h(S_s = 0) \leq c \frac{h}{s} \frac{1}{\sqrt{s}} e^{-\frac{h^2}{c's}}.
\]
Let $T_0^m$ be the first hitting time of 0 for a simple random walk restricted to the interval $[0,m]$.
Then $T_0$ stochastically dominates
$T_0^m$. 
By reversibility we have $q_t(0,(k,h)) = q_t((k,h),0)$ and so by Lemma \ref{claim1}
\begin{align*}
q_t( (k,h), 0) \le
\sum_{s=1}^{t-1} P_h(T_0^{k^{\alpha}} =s) q_{t-s}((k,0),0)
\leq \sum_{s=1}^{t-1} P_h(T_0^{k^{\alpha}} =s) c(t-s)^{-\beta'}.
\end{align*}
Set $\psi(s) = c(t-s)^{-\beta'}$.
Then $\psi$ is an increasing function and thus using the stochastic monotonicity we mentioned above
we have that $\sum_{s}P_h(T_0^{k^{\alpha}} =s)\psi(s) \leq \sum_{s}P_h(T_0=s)\psi(s)$. So,
\begin{align*}
q_t( (k,h), 0)&\leq \sum_{s=1}^{t-1} P_h(T_0=s)  \frac{c}{(t-s)^{{\beta'}}}
 \leq \sum_{s=1}^{t-1} c \frac{h}{s} \frac{1}{\sqrt{s}} e^{-\frac{h^2}{c's}} \frac{1}{(t-s)^{{\beta'}}} \\
&\leq h c \left( \sum_{s=1}^{\frac{t}{2}} \frac{1}{s^{\frac{3}{2}}} \frac{1}{t^{{\beta'}}}e^{-\frac{h^2}{c's}} + \sum_{\frac{t}{2}\leq s\leq t-1} \frac{1}{t^{\frac{3}{2}}} \frac{1}{(t-s)^{{\beta'}}}e^{-\frac{h^2}{c's}}  \right)  \\
&\leq hc''\left(\frac{1}{\sqrt{t}}e^{-\frac{h^2}{c't}} \frac{1}{t^{{\beta'}}} + \frac{t^{\frac{1}{4}}}{t^{\frac{3}{2}}}e^{-\frac{h^2}{c't}} \right).
\end{align*}
But $h e^{-\frac{h^2}{c't}} \leq c_1 \sqrt{t}e^{-\frac{h^2}{2c't}}$, so
\begin{align*}
q_t( (k,h), 0)  \leq C t^{-\beta'} e^{-\frac{h^2}{ct}}.
\end{align*}
\qed

\begin{lemma} \label{smalltimes2}
Let $x = (k,0)$. Then if $t< k^{2+\alpha}$, we have
\[
q_t(0,x) \leq \frac{c}{k^{1+\alpha}} = c\left(k^{2+\alpha}\right)^{-\beta}.
\]
Hence
$$ \sup_{ t\ge 0} q_t(0,x) \le c\left(k^{2+\alpha}\right)^{-\beta}. $$
\end{lemma}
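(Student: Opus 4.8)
Write $K:=k^{2+\alpha'}$, so that (since $\beta'=\tfrac{1+\alpha'}{2+\alpha'}$) the target bound is exactly $q_t(0,x)\le cK^{-\beta'}=c\,k^{-(1+\alpha')}$, matching the second case of Lemma \ref{lem:qtbound}. The plan is to prove the stronger combined off-diagonal estimate
\[
 q_t(0,x)\le c\,t^{-\beta'}\exp\!\big(-c\,(K/t)^{1/3}\big),\qquad t\ge 1 .
\]
This yields the Lemma for every $t<K$: writing $\rho=K/t\ge1$ we have $t^{-\beta'}=K^{-\beta'}\rho^{\beta'}$, and the elementary inequality $\sup_{\rho\ge1}\rho^{\beta'}e^{-c\rho^{1/3}}<\infty$ turns the right-hand side into $cK^{-\beta'}$. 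The concluding $\sup$ statement then also covers the range $t\ge K$, since there Lemma \ref{lem:offh} with $h=0$ already gives $q_t(0,x)\le c\,t^{-\beta'}\le cK^{-\beta'}$.

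To prove the combined estimate I would decompose on the first hitting time $\sigma$ of $x=(k,0)$. The key structural feature of the comb is that the horizontal coordinate of the walk moves only while the walk sits on the axis, so the first visit to $x$ coincides with the first time the horizontal coordinate equals $k$; in particular $\{\sigma\le s\}\subseteq\{\tau_{H(0,k-1)}\le s\}$, and Lemma \ref{lem:hit-tail}(b) supplies the tail bound $P_0(\sigma\le s)\le c\exp(-c(K/s)^{1/3})$. By the strong Markov property at time $\sigma$,
\[
 d(x)\,q_t(0,x)=P_0(Y_t=x)=\sum_{s=0}^{t}P_0(\sigma=s)\,P_x(Y_{t-s}=x),
\]
and I would bound the return factor by the on-diagonal estimate of Lemma \ref{claim1}: since $x$ lies on the horizontal axis, $P_x(Y_m=x)=d(x)q_m(x,x)\le c\,(m+1)^{-\beta'}$ for all $m\ge0$.

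I would then split the sum according to whether the walk reaches $x$ early or late. For the early part $\sigma\le t/2$ the remaining time satisfies $t-s\ge t/2$, so the return factor is at most $c(t/2)^{-\beta'}$, while the hitting probability contributes $\exp(-c(K/t)^{1/3})$ through Lemma \ref{lem:hit-tail}(b); multiplied together these are exactly of the combined form $c\,t^{-\beta'}\exp(-c(K/t)^{1/3})$. This part is routine.

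The delicate part, and the step I expect to be the main obstacle, is the late contribution $\sigma\in(t/2,t]$, where the walk only just reaches $x$ before time $t$ and then returns quickly. Bounding the return factor crudely by $1$ gives only $P_0(\sigma\le t)\le c\exp(-c(K/t)^{1/3})$, which drops the essential factor $t^{-\beta'}$ and is genuinely insufficient near the threshold $t\approx K$, where the hitting tail is of order one but the target $cK^{-\beta'}$ tends to zero. To recover the missing factor one must keep the weight $(t-s+1)^{-\beta'}$ and control the first-passage mass in short windows ending at $t$, i.e.\ prove a local bound of the form $P_0(\sigma\in[t-m,t])\le c\,(m+1)/K$, equivalently a pointwise density bound $P_0(\sigma=s)\le c/K$. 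Granting this, the late contribution is at most $(c/K)\sum_{m=0}^{t/2}(m+1)^{-\beta'}\le (c/K)\,c'\,t^{1-\beta'}\le c''K^{-\beta'}$, using $\beta'<1$ and $t<K$, which completes the combined estimate. This local (anti-concentration) control of $\sigma$ does not follow from the cumulative tail of Lemma \ref{lem:hit-tail} alone; I would obtain it either by a regeneration argument for the embedded horizontal walk — whose first passage to level $k$ is a sum of order $k^2$ roughly independent tooth-excursion times, each spread out on a scale forcing the total to have density $O(1/K)$ — or by restarting the slab-exit estimate of Lemma \ref{lem:hit-tail} from an intermediate time and summing the resulting geometric series in the window length.
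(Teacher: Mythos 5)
Your first half (the contribution from $\sigma\le t/2$) is fine and is in the same spirit as the paper: cumulative hitting tail from Lemma \ref{lem:hit-tail} times the on-diagonal return bound from Lemma \ref{claim1}, then optimize $\eta^{\beta'}e^{-c\eta^{1/3}}$. But the second half is a genuine gap, and you have correctly diagnosed where it is: the late contribution $\sigma\in(t/2,t]$ needs the pointwise anti-concentration bound $P_0(\sigma=s)\le c/K$ (or the windowed version $P_0(\sigma\in[t-m,t])\le c(m+1)/K$), and you do not prove it. Neither of your two suggested routes is carried out, and the first is genuinely delicate: for $\alpha>2$ the individual tooth-excursion times are heavy-tailed (an excursion at site $i$ has mean of order $i^{\alpha}$ but tail $P(T>u)\asymp i^{\alpha}u^{-1/2}$ up to $u\asymp i^{2\alpha}$), so "sum of $k^2$ roughly independent times has density $O(1/K)$" requires a concentration-function or local-limit input that is not at all automatic; the second route ("restarting the slab-exit estimate and summing a geometric series") only ever produces cumulative tails, which is exactly the information you correctly observe is insufficient. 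As written, the proof does not close.

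The paper sidesteps the anti-concentration issue entirely by decomposing at the first hitting time $T_m$ of a \emph{midpoint} $m\approx k/2$ rather than of $x$ itself, and using reversibility. Writing $P_0(X_t=x)=P_0(X_t=x,\,T_m\le t/2)+P_0(X_t=x,\,T_m>t/2)$, the first term is your "early" argument verbatim (strong Markov at $T_m$, then $P_0(T_m\le t/2)\le c\exp(-c(k^{2+\alpha'}/t)^{1/3})$ times $\max_{s\le t/2}P_m(X_{t-s}=x)\le ct^{-\beta'}$). For the second term, reversing time turns the event $\{X_t=x,\ T_m>t/2\}$ started at $0$ into the event that the walk started at $x$ reaches $0$ with its \emph{last} visit to $m$ before $t/2$; since every path from $x$ to $0$ must pass through the vertical line over $m$, this is contained in $\{T_m<t/2\}$ for the reversed walk, and the identical early-arrival argument applies from the other end. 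Thus only the cumulative tail of Lemma \ref{lem:hit-tail} is ever needed, and no first-passage density estimate appears. I would recommend you replace your late-arrival step with this reversibility argument; it is both shorter and avoids the hard lemma you would otherwise have to supply. (Minor point: the exponents in the statement should be read as $\alpha'$ and $\beta'$, as you assumed.)
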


\proof
Let $m$ be an integer within distance $1$ of $k/2$, and let $T= T_m$.
Now
\begin{equation} \label{e:split}
P_0(X_t=x) = P_0(X_t=x, T_m \le t/2) + P_0 (X_t=x, T_m \ge t/2).
\end{equation}
By time reversibility, we have that
\[
P_0(X_t=x, T_m > t/2 ) = c P_x (X_t = 0, \text{ last visit to $m$ before } t/2 )
\le c P_x ( X_t=0, T_m < t/2),
\]
so to bound the second term in \eqref{e:split} it suffices to bound $P_x(X_t=0, T_m \leq t/2)$.

By the strong Markov property we have
$$ P_0(X_t=x, T_m  \leq t/2 )
   \le P_0 (T_m  \le t/2) \max_{0 \le s \le t/2} P_m (X_{t-s} = x). $$
We bound the first term above using Lemma \ref{lem:hit-tail}, while the second term
is bounded by $c t^{-\beta}$. Thus writing $t =  k^{2+\alpha}/\eta$, we have
\begin{align*}
 P_0(X_t=x, T_m  \leq t/2 ) &\le c t^{-\beta} \exp( -c ( k^{2+\alpha} /t)^{1/3} ) \\
 &\le c k^{-1-\alpha} \eta^{\beta} e^{ -c \eta^{1/3} } \\
 &\le  c k^{-1-\alpha}  \sup_{\eta>0} (\eta^{\beta} e^{ -c \eta^{1/3} } ) \le c'  k^{-1-\alpha} .
\end{align*}
The term $P_x(X_t=0, T_m \leq t/2)$ is bounded in exactly the same way.
\qed

\begin{proof}[\textsl{Proof of Lemma \ref{lem:qtbound}.}]
By Cauchy-Schwartz we have:
\[
q_t(0,x) \leq \sqrt{q_t(0,0)} \sqrt{q_t(x,x)}.
\]
Suppose first $x=(k,0)$ is on the horizontal axis.
Then the bound on $q_t(0,x)$ follows from Lemma \ref{claim1}
if $t \ge k^{2 +\alpha'}$, and from Lemma \ref{smalltimes2} if  $t \le k^{2 +\alpha'}$.

If $x = (k,h)$ where $h>0$ then the bound follows from Lemma \ref{lem:offh}
if $t \ge k^{2+\alpha'}$. If  $t \le k^{2+\alpha'}$ then by considering the first hit on $k$
$$ P_x( X_t =0) \le \max_{0\le s \le t} P_k(X_s =0) \le c k^{-1-\alpha'} $$
by Lemma \ref{smalltimes2}.
\end{proof}


\section{Spherically symmetric trees} \label{sec:sstree}

In this section we are going to show the double phase transition taking place in the spherically symmetric trees of
lengths $b_n=2^{2^{\beta n}}$. We remark that for part (c) the Green kernel criterion, Theorem \ref{greenkernel} does not apply.

\begin{proof}[\textsl{Proof of Theorem \ref{trees}(a)}]
Let $(X_n,Y_n)_n$ be a discrete time walk on the product space $T \times T$.
To show recurrence of the pair $(X_n,Y_n)$, we are going to use the Nash-Williams criterion of recurrence,
which can be found for instance in \cite{Mixing} (Chapter 21, Proposition 21.6).

Let $\Pi_n=\{ x \in T: d(0,x) =n\}$, and
\[
\Pi_n^* = (\Pi_n \times \cup_{i \leq n}\Pi_i) \cup (\cup_{i \leq n}\Pi_i \times \Pi_n) \subset T\times T.
\]
Let $E_n$ be the set of edges with at least one vertex in $\Pi^*_n$.
Then the sets $(E_{2n})_n$ constitute a sequence of disjoint edge-cutsets that
separate $(o,o)$ from $\infty$. To show recurrence of $(X_n,Y_n)_n$,
by the Nash-Williams criterion of recurrence (see for instance \cite{Mixing} (Chapter 21, Proposition 21.6))
we only need to show that
\be \label{e:pisum}
\sum_n |E_{2n}|^{-1}  = \infty.
\ee
We have $|E_n| \le c |\Pi_n^*| \le c' |\Pi_n| \times (\sum_{i=1}^{n} |\Pi_i|)$.
However $|\Pi_n| \asymp (\log n)^{\frac{1}{\beta}}$, and so
$\sum_{i=1}^{n} |\Pi_i| \asymp \sum_{i=1}^{n}(\log i)^{\frac{1}{\beta}} \le n (\log n)^{\frac{1}{\beta}}$. Hence
\[
|\Pi_n^*| \le  C n (\log n)^{\frac{2}{\beta}},
\]
and therefore as $\beta \ge 2$  \eqref{e:pisum} diverges.
\end{proof}

The remaining parts of the proof will require estimates of the transition probabilities of the
random walk $X$ on $T$. Since
it will sometimes be convenient to use these rather than the transition density $q_t(x,y)$
we write
$$ p_t(x,y) = P_x(X_t = y). $$

Let
\be
 a_n = \sum_{i=0}^{n-1} b_i, \quad n \ge 1.
\ee
Note that the $n$-th branch point from $o$ is at distance $a_n$ from $o$.
For $x \in T$ let $n(x)$ be the number of branches at the same level as $x$;
$n(x)$ is also the number of vertices $y \in T$ such that $d(o,y)=d(o,x)$.
We write
$$ J_n = \{ x \in T: n(x) = 2^n \}; $$
these are the points between the $(n-1)$ -th and $n$-th branch points.

\begin{remark}\label{rembd}
{\rm
Our main tool will be by comparison with a birth and death chain $\tilde X$  on $\Z_+$ that jumps to
either $x+1$ or $x-1$ with the following probabilities. If $x$ is at distance $a_n$
from the origin for some $n\ge 1$, then
$p_{x,{x+1}} = \frac{2}{3} = 1 - p_{x,x-1}$, otherwise for all other $x$, $p_{x,{x+1}} = \frac{1}{2} = 1 - p_{x,x-1}$.
We write  $p_t^{\text{BD}}(o,x')$ for the transition probabilities of this birth and death chain,
and $q_t^{\text{BD}}(o,x')$ for its transition density with respect to its invariant measure.
Note that $X'_t = d(o, X_t)$ has the law of this birth and death chain. Therefore if
for $x \in T$ we write $|x|=d(o,x)$ then by symmetry
\begin{align}
\label{birthdeath}
p_t(o,x) = \frac{1}{2^{n(x)}} p_t^{\text{BD}}(o,|x|).
\end{align}
We write $\tau'$, $T'$ etc. for hitting and exit times for the birth and death chain.
} \end{remark}

\begin{lemma} \label{claim12}
We have for $t \ge 0$
\be  \label{e:pt00}
p_t(o,o) \leq \frac{c}{\sqrt{t} (\log{t})^{1/ \beta}},
\ee
and for $x \in J_n$, $t \ge 0$,
\be \label{e:ptxx}
 p_t(o,x) \le \frac{     c_1}
 { \sqrt{t}  {\left(\log{t}\right)^{{1}/{2\beta}} \left(\log(|x|+c_2 \sqrt{t})\right)^{{1}/{2\beta}} }}.
\ee
\end{lemma}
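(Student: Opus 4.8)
The plan is to prove both bounds by reducing everything to the birth-and-death chain $\tilde X$ via the symmetry identity \eqref{birthdeath}, and then analyzing $\tilde X$ through a comparison with simple random walk on $\Z_+$ whose local drift at the branch points forces the walk outward. The key structural fact is that $\tilde X$ behaves like a simple random walk except at the branch points $a_1, a_2, \dots$, where it has an outward bias $2/3$. Since the number of branches $2^{n(x)}$ grows doubly exponentially in the branch index, the factor $2^{-n(x)}$ in \eqref{birthdeath} is exactly what converts the one-dimensional Gaussian estimate $p_t^{\text{BD}}(o,|x|) \asymp t^{-1/2}$ into the extra logarithmic gains appearing in \eqref{e:pt00} and \eqref{e:ptxx}.

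First I would establish \eqref{e:pt00}. The starting point is the on-diagonal bound for the birth-and-death chain, $p_t^{\text{BD}}(o,o) \le c\, q_t^{\text{BD}}(o,o) \le c\, t^{-1/2}$, which follows from the standard heat-kernel bound for a one-dimensional chain with bounded conductances (or directly from Lemma \ref{lem:GComp} applied with $B$ an interval of length $\sqrt t$, using that the effective resistance is comparable to the length). Then $p_t(o,o) = p_t^{\text{BD}}(o,o)$ since $n(o)$ corresponds to the trivial branch count. To extract the extra $(\log t)^{1/\beta}$, I would argue that by time $t$ the walk has typically reached distance of order $\sqrt t$ from the root, hence has passed roughly $n$ branch points where $a_n \asymp \sqrt t$; since $a_n = \sum_{i<n} b_i \asymp b_{n-1} = 2^{2^{\beta(n-1)}}$, solving $2^{2^{\beta n}} \asymp \sqrt t$ gives $n \asymp \frac{1}{\beta}\log\log t$, so $|J|$ at that scale has $2^n \asymp (\log t)^{1/\beta}$ branches. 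The symmetry factor $2^{-n}$ then produces the claimed denominator. Making this precise requires a lower bound on $P_o(|X_t| \ge c\sqrt t)$ of order a constant, which follows from the diffusive scaling of $\tilde X$ together with the fact that the outward drift only helps.

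For the off-diagonal bound \eqref{e:ptxx}, I would again use \eqref{birthdeath}, writing $p_t(o,x) = 2^{-n(x)} p_t^{\text{BD}}(o,|x|)$ for $x \in J_n$, so that $2^{n(x)} = 2^n$. The one-dimensional factor is controlled by $p_t^{\text{BD}}(o,|x|) \le c\, t^{-1/2}$, contributing the leading $t^{-1/2}$. The two logarithmic factors in the denominator should be read as the geometric mean of the branch count at the typical range of the walk, $(\log t)^{1/2\beta}$, and the branch count at the location $x$ itself together with the diffusive spread, $(\log(|x| + c_2\sqrt t))^{1/2\beta}$. Concretely, $2^{n(x)}$ is comparable to $(\log |x|)^{1/\beta}$ when $|x| \asymp b_n$ (using $\log|x| \asymp 2^{\beta n}$, hence $(\log|x|)^{1/\beta} \asymp 2^n$), so I would split the factor $2^{-n(x)} = 2^{-n/2} \cdot 2^{-n/2}$ and bound one half by $(\log t)^{-1/2\beta}$ via the range argument from \eqref{e:pt00} and the other half by $(\log(|x|+c_2\sqrt t))^{-1/2\beta}$, inserting the $c_2\sqrt t$ shift to handle the regime $|x| \le \sqrt t$ where the walk's diffusive spread dominates its starting displacement.

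The main obstacle I expect is the careful bookkeeping in the off-diagonal case, specifically justifying the precise interplay between $|x|$ and $\sqrt t$ encoded in the argument $\log(|x| + c_2\sqrt t)$. When $|x| \gg \sqrt t$ the relevant branch count is dictated by $x$, but the one-dimensional transition probability $p_t^{\text{BD}}(o,|x|)$ is then in a Gaussian-tail regime and needs a genuine large-deviation estimate rather than the on-diagonal bound; when $|x| \ll \sqrt t$ the walk has mixed out to its typical range and the branch count at scale $\sqrt t$ takes over. Reconciling these two regimes into the single clean expression \eqref{e:ptxx}, while keeping track of which branch points have actually been crossed and ensuring the doubly-exponential growth of $b_n$ converts cleanly into the stated powers of $\log$, is where the real work lies. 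I would handle the Gaussian-tail regime using Lemma \ref{lem:hit-tail}-type hitting-time bounds for $\tilde X$ and the near-diagonal regime using the diffusive lower bound on the range, matching the two at $|x| \asymp \sqrt t$.
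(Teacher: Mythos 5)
Your overall skeleton --- project onto the birth-and-death chain via \eqref{birthdeath}, use Green-function/resistance bounds, and read the off-diagonal estimate as a geometric mean of two on-diagonal ones --- is the same as the paper's, but the mechanism you propose for extracting the logarithmic factors has a genuine gap. For \eqref{e:pt00} the symmetry factor is $2^{-n(o)}=1$, so it cannot ``produce the claimed denominator''; and a constant lower bound on $P_o(|X_t|\ge c\sqrt t)$ is not an upper-bound mechanism for a return probability (the walk could be far with probability $1/2$ and still have $p_t(o,o)\gg t^{-1/2}(\log t)^{-1/\beta}$; to run a Chapman--Kolmogorov argument you would need the quantitatively stronger statement that the mass within $\epsilon\sqrt t$ of the root is $O(\epsilon)$, together with a uniform bound $\sup_r p_t^{\text{BD}}(0,r)\le ct^{-1/2}$, neither of which you establish). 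Worse, you assert that ``the effective resistance is comparable to the length,'' which is exactly what fails here and is the whole point: the conductances of the birth-and-death chain double at each branch point, so with $B=\{y\le |x|+b\sqrt t\}$ one has $R_{\text{eff}}(|x|,B^c)\le 2^{-n-m}\cdot 2b\sqrt t$, where $m$ is the number of branch points in the window and $2^{n+m}\asymp(\log(|x|+b\sqrt t))^{1/\beta}$. Combining this with Lemma \ref{lem:GComp}, after checking $P_{|x|}(\tau'_B<t)\le 1/2$ via the branch of length $\ge\tfrac12 b\sqrt t$ contained in the window, gives $q_t^{\text{BD}}(|x|,|x|)\le ct^{-1/2}(\log(|x|+c_2\sqrt t))^{-1/\beta}$ and $q_t^{\text{BD}}(0,0)\le ct^{-1/2}(\log(c_2\sqrt t))^{-1/\beta}$. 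That resistance computation is the sole source of the logarithms, and it is absent from your argument.

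Your bookkeeping for \eqref{e:ptxx} also does not close: $2^{-n(x)/2}\asymp(\log|x|)^{-1/2\beta}$, which is much larger than $(\log t)^{-1/2\beta}$ when $|x|\ll\sqrt t$ (e.g.\ $x$ in the first segment gives $2^{-n(x)/2}=1$), so splitting the symmetry factor in half cannot supply the $(\log t)^{-1/2\beta}$ term. The paper instead applies Cauchy--Schwarz to the \emph{normalized} kernel, $q_t^{\text{BD}}(0,|x|)\le\sqrt{q_t^{\text{BD}}(0,0)}\sqrt{q_t^{\text{BD}}(|x|,|x|)}$, and then uses that $p_t(o,x)=q_t^{\text{BD}}(0,|x|)$; this treats all $|x|$ at once, so the separate Gaussian-tail/large-deviation regime you anticipate never arises (note the claimed bound \eqref{e:ptxx} carries no Gaussian factor). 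In short, the intuition about branch counts at scale $\sqrt t$ is right, but the proof needs the halving of edge resistances at branch points fed into Lemma \ref{lem:GComp}, and your proposal replaces that step with arguments that do not yield upper bounds.
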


\proof
Let $B = \{ y \in \Z_+: y \leq |x|+b\sqrt{t}\}$, for a constant $b$ to be determined later.
Applying Lemma \ref{lem:GComp} to the birth and death chain $X'$
\begin{align*}
 q_t^{\text{BD}}(|x|,|x|) \leq \frac{c''R_{\text{eff}}(|x|,B^c)}{tP_{|x|}(\tau'_{B} \geq t)}.
\end{align*}
It is easy to verify that
$R_{\text{eff}}(|x|,B^c) \le 2^{-n-m} 2b\sqrt{t}$, where $m$ is the number of branch points
between $|x|$ and $|x|+c\sqrt{t}$; note that at each branch point the effective resistance is halved.
Since there are approximately $\beta^{-1} \log_2 \log_2 r$ branch points between $0$ and $r$,
we have
$m = \frac{1}{\beta}\left(\log_2 \log_2(|x|+b\sqrt{t}) - \log_2 \log_2 |x| \right)$,
if $x \neq o$ and $\frac{1}{\beta}\log_2 \log_2(b\sqrt{t})$ if $x=o$.
We now need to bound $P_{|x|}(\tau'_B < t)$.
Since for each $n$, $\sum_{k=1}^{n}b_k \asymp 2^{2^{\beta n}}$, there must exist a branch of length at
least $\frac{1}{2}b \sqrt{t}$ between $|x|$ and $|x|+b\sqrt{t}$; call this branch $A$. Let
$y$ be the midpoint of $A$. Then
$P_{|x|}(\tau'_{B} < t)$ is smaller than the probability that a simple random walk started
at $y$ remains in $A$ for time at least $t$.
But from the exponential hitting time bounds for the simple random walk on $\Z$
we can make this probability as small as we like by choosing the constant $b$ large.
So, taking $b=c_2$ large enough we have $P_{|x|}(\tau'_{B} \geq t) >1/2$ and hence
since $2^n \simeq (\log_2 |x|)^{1/\beta}$,
\be \label{e:qbd-1}
 q_t^{\text{BD}}(|x|,|x|) \le c'' 2^{-n} t^{-1/2}  \left(\frac {\log_2(|x|+c_2\sqrt{t})} {\log_2 |x|} \right)^{ -1/\beta }
 \le c''  t^{-1/2}  \left( \log_2( |x|+c_2\sqrt{t})  \right)^{ -1/\beta } .
\ee
A similar calculation gives
$$ q_t^{\text{BD}}(0,0) \le c'' t^{-1/2}  \left(  {\log( c_2\sqrt{t})} \right)^{ -1/\beta } . $$
Hence
$$ q_t^{\text{BD}}(0,|x|) \le c  t^{-1/2} \left( \log_2( |x|+c_2\sqrt{t})  \right)^{ -1/2 \beta }
 \left(  {\log( c_2\sqrt{t})} \right)^{ -1/2 \beta } . $$
 Since $p_t^{\text{BD}}(0,x) = 2^n q_t^{\text{BD}}(0,x)$,
 using \eqref{birthdeath} we have $p_t(o,x) = q_t^{\text{BD}}(0,|x|)$ and this completes
 the proof.
 \qed

\begin{proof}[\textsl{Proof of Theorem \ref{trees}(b)}]
Transience of the product chain is equivalent to the sum
$\sum_{t} p_t(o,o)^2$ being finite. Using the upper bound \eqref{e:pt00}
we get that
\begin{align}
\sum_{t} p_t(o,o)^2 \leq \sum_{t} \frac{c}{t (\log{t})^{\frac{2}{\beta}}},
\end{align}
which is finite since $\beta <2$.
\end{proof}

\begin{proof}[\textsl{Proof of Theorem \ref{trees}(c)}]
Let $X$ and $Y$ be two independent discrete time simple random walks on the tree $T$.
We are going to count the number of collisions that occur at level $n$, i.e. on all the segments of length $b_n = 2^{2^{\beta n}}$.
Set $a = \sum_{i=0}^{n-1} b_i$, which is the distance from the root to the $(n-1)$-th branch point. We
now divide the segment of length $b_n$ into subintervals.
The first one has length equal to $2^0a$, the second one $2^1a$ and the $l$-th
one has length $2^{l-1}a$. In total we get order $2^{\beta (n-1)}$ such intervals, say $\alpha 2^{\beta(n-1)}$. Let $I_{n,l}^i$ denote the $l$-th such
interval on the $i$-th branch, for $i=1,\cdots, 2^n$ and let $J_{n,l}$
denote the collection of all these subintervals, i.e. $2^n$ in total.
We are going to divide the proof into two parts: for $\beta \geq 1$ and $\frac{1}{2} \leq \beta <1$, primarily because the relevant times that contribute to the number of collisions are of different orders, but also for some other technical reasons.

\framebox{$\beta \geq 1$}

We define
\begin{align} \label{collision}
Z_{n,l} = \sum_{t=(2^la)^2}^{2(2^la)^2} \1(X_t = Y_t \in J_{n,l}).
\end{align}
Thus $Z_{n,l}$ counts the number of collisions that happen on the set $J_{n,l}$ and at times
that are of order $(2^la)^2$.
We want to lower bound $P_o(Z_{n,l}>0)$.
To do so, we are going to lower bound $E_o(Z_{n,l})$, upper bound $E_o(Z_{n,l}|Z_{n,l}>0)$ and then use the obvious
equality
\begin{align}
\label{obvious}
P_o(Z_{n,l}>0) = \frac{E_o(Z_{n,l})}{E_o(Z_{n,l}|Z_{n,l}>0)}.
\end{align}

\begin{lemma}
\label{lemma}
There exists a constant $c>0$ such that $P_o(X_t \in I_{n,l}^i) \geq \frac{c}{2^n}, \forall i$,
for $t$ such that $(2^la)^2 \leq t\leq2(2^la)^2$
and for all $l\geq 1$.
\end{lemma}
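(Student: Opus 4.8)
The plan is to reduce the whole statement to a one-dimensional estimate for the birth-and-death chain $X'_t = d(o,X_t)$ of Remark \ref{rembd}, and then prove a diffusive lower bound for that chain. First I would use spherical symmetry together with \eqref{birthdeath}: summing $P_o(X_t=x)$ over the vertices $x$ of the $i$-th branch whose distance from $o$ lies in the range swept out by the $l$-th subinterval, and using that there are $2^n$ branches at this level, one gets
$$ P_o(X_t \in I_{n,l}^i) = \frac{1}{2^n}\, P_o\big(X'_t \in \tilde I_{n,l}\big), \qquad \tilde I_{n,l} = [2^{l-1}a,\,2^l a], $$
where $\tilde I_{n,l}$ is the interval of distances corresponding to $I_{n,l}^i$. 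Hence, writing $r = 2^l a$, it suffices to show $P_o\big(X'_t \in [r/2,r]\big) \ge c$ for $r^2 \le t \le 2r^2$, uniformly in $n,l$. This is exactly the diffusive regime: the window $[r/2,r]$ has width $r/2 \asymp \sqrt t$ and sits at distance $\asymp \sqrt t$ from the root, so a constant lower bound is what one expects.

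The geometric fact I would exploit is that the segment of length $b_n$ has no branch point in its interior; the branch points occur only at its endpoints, at distances $a$ and $a+b_n$. Since $r = 2^l a \lesssim b_n$, the window $[r/2,r]$ and everything below it down to $a$ lies strictly inside this branch-point-free segment (for $l=1$ the window merely abuts the branch point at $a$). Thus above distance $a$ the chain $X'$ is an ordinary simple random walk, and the only non-simple feature it meets on the way out is the outward drift $2/3$ at the branch points below $a$, which always pushes it away from $o$.

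This suggests a two-step argument. For the \emph{reaching} estimate I would build a monotone coupling of $X'$ with a simple random walk $W$ reflected at $0$ and started at $0$, so that $X'_t \ge W_t$ for all $t$; this is possible because the up-probability of $X'$ is everywhere at least $1/2$. Then $X'$ hits $\rho := r/2$ no later than $W$ does, and since $t/2 \ge 2\rho^2$ the standard hitting-time estimate for $W$ gives $P_o(\tau'_\rho \le t/2) \ge P(\tau^W_\rho \le t/2) \ge c_1$. For the \emph{pinning} estimate I would apply the strong Markov property at $\tau'_\rho$: from $\rho = r/2$, which lies in the simple-random-walk region, over the remaining time $t-s \in [t/2,t] \asymp r^2$ the walk lands in $[r/2,r]$ with probability at least $c_2$, since a simple random walk spreads to scale $\sqrt{t-s}\asymp r$ while the window has the starting point on its lower edge and width $\asymp \sqrt{t-s}$. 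Multiplying yields $P_o\big(X'_t \in [r/2,r]\big) \ge c_1 c_2 =: c$.

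The step I expect to be the main obstacle is the pinning estimate, specifically the two-sided control $r/2 \le X'_t \le r$: stochastic domination only bounds from below how far $X'$ travels, and gives no control on overshoot past $r$, while the outward branch-point drift pushes in the wrong direction for an upper bound. The clean way around this is precisely the no-interior-branch-point observation, which lets me replace $X'$ on the segment by a genuine simple random walk and invoke the constant-probability estimates for a simple random walk ending in, and staying within, an interval of width $\asymp\sqrt t$ over time $\asymp t$; the finitely many branch points below the window contribute only the harmless outward drift, which I would absorb by a further monotone comparison.
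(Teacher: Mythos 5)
Your reduction to the birth--and--death chain via \eqref{birthdeath} and spherical symmetry is exactly the paper's first step, and the two comparison walks you introduce (a simple random walk reflected at $0$ from below, and one reflected at the last branch point $a$ from above) are precisely the couplings the paper uses. The difference is organizational but it matters: the paper never invokes the strong Markov property. It bounds
$P_0(\tilde X_t \in (r,2r)) = P_0(\tilde X_t \le 2r) - P_0(\tilde X_t \le r) \ge P_{a}(\tilde S_t \le 2r) - P_0(S_t \le r)$
directly from time $0$, where the two comparison walks start within distance $a$ of each other; since $a \le r/2$ is at most half the window width, the difference collapses to the probability that a single reflected simple random walk lies in an interval of width $\asymp \sqrt t$ at distance $\asymp \sqrt t$, which is a positive constant.

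The gap in your version is the pinning step. After restarting at $\tau'_{\rho}$ with $\rho$ on the lower edge of the window, the natural implementation of your ``further monotone comparison'' is
$P_{\rho}(X'_u \in [r/2,r]) \ge P_{\rho}(\tilde S_u \le r) - P_{\rho}(W_u < r/2)$,
with $\tilde S$ the dominating walk reflected at $a$ and $W$ the dominated walk reflected at $0$, both started at $\rho$ and run for $u \asymp r^2$. This difference need not be positive: since $\tilde S \ge W$, the first term is at most $P_\rho(W_u \le r)$, and in the worst case $l=1$ (where $a = r/2$ coincides with the lower edge of the window) a Gaussian computation gives roughly $P_\rho(\tilde S_u \le r) \approx 0.28$ against $P_\rho(W_u < r/2) \approx 0.4$. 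The point is that after time $\asymp r^2$ the two comparison walks have separated by $\asymp \sqrt u \asymp r$, which swamps the window width, whereas at time $0$ they are only $a$ apart. Your fallback --- run a genuine simple random walk on the branch-point-free segment and forbid it from reaching $a$ --- also fails at $l=1$, where the window abuts the branch point. To repair the argument you would either have to choose the intermediate level and the target window so that the walk can be confined strictly inside the branch-point-free region with room $\asymp \sqrt u$ on both sides (which the extreme values of $l$ do not permit as stated), or abandon the reach-then-pin decomposition and do the one-shot difference of distribution functions from the origin, as the paper does.
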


\proof
Let $\tilde{X}$ be the birth and death chain described in Remark \ref{rembd}. Then we can couple it with a simple random walk $S$ on $\Z_+$, such that
$\tilde{X}_t \geq S_t$, for all $t$. Let $b$ be the last branch point (which is at distance $a$ from 0) before the interval $I_{n,l}^i$. Then we can couple $\tilde{X}$ with a simple random walk $\tilde{S}$ started from $|b|$ with state space $[|b|, \infty)$ and such that $\tilde{X}_t \leq \tilde{S}_t$, for all $t$. We then have
\begin{align*}
P_0(\tilde{X}_t \in (2^la,2(2^la))) = P_0(\tilde{X}_t \leq 2(2^la)) - P_0(\tilde{X}_t \leq 2^la) \geq P_{|b|}(\tilde{S}_t \leq 2(2^la)) -
P_0(S_t \leq 2^la).
\end{align*}
But $P_{|b|}(\tilde{S}_t \leq 2(2^la)) = P_0(S'_t + a \leq 2(2^la)) =  P_0(S'_t \leq 2(2^la) - a)$, where $S'$ is a simple random walk on $\Z_{+}$ started
from 0. For $l\geq 1$ we then have that $x:=2(2^la) - a > a$ and so we get
\begin{align*}
P_b(\tilde{S}_t \leq 2(2^la)) - P_0(S_t \leq 2^la) = P_0(S'_t \in (a,x)).
\end{align*}
But since $t$ is of order $(2^la)^2$, we get that $\exists c>0$ such that
such that $P_0(S'_t \in (2^la,x)) \geq c$, so for the birth and death chain we obtain that
\[
P_0(\tilde{X}_t \in (2^la,2(2^la))) \geq c,
\]
for all $l\geq 1$ and hence using \eqref{birthdeath} we deduce
\begin{align*}
P_o(X_t \in I_{n,l}^i) \geq \frac{c}{2^n}.
\end{align*}
\qed

\begin{claim}
$E_o(Z_{n,l}) \geq C \frac{2^la}{2^n}$.
\end{claim}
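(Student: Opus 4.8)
The plan is to compute the first moment of $Z_{n,l}$ directly. Since $X$ and $Y$ are independent and both start at $o$, for each fixed time $t$ we have
$$ P_o(X_t = Y_t \in J_{n,l}) = \sum_{x \in J_{n,l}} p_t(o,x)^2, $$
so that $E_o(Z_{n,l}) = \sum_{t=(2^l a)^2}^{2(2^l a)^2} \sum_{x \in J_{n,l}} p_t(o,x)^2$. The first step is to exploit the spherical symmetry of $T$: because $p_t(o,x)$ depends only on the distance $|x|$, each of the $2^n$ branches of level $n$ contributes equally to the inner sum, and all $2^n$ vertices at a fixed distance $r$ carry the same mass. Writing $p_t^{(r)}$ for this common value, the inner sum becomes $2^n \sum_r \bigl(p_t^{(r)}\bigr)^2$, where $r$ ranges over the $2^{l-1}a$ distances spanned by a single interval $I_{n,l}^i$.

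The second step is a Cauchy--Schwarz bound at each time $t$. Lemma \ref{lemma} gives $P_o(X_t \in I_{n,l}^i) = \sum_r p_t^{(r)} \ge c/2^n$ for $t$ in the window $[(2^la)^2, 2(2^la)^2]$. Since there are only $2^{l-1}a$ terms in this sum, Cauchy--Schwarz yields
$$ \sum_r \bigl(p_t^{(r)}\bigr)^2 \ge \frac{\bigl(\sum_r p_t^{(r)}\bigr)^2}{2^{l-1}a} \ge \frac{c^2}{2^{2n}\,2^{l-1}a}, $$
and hence $\sum_{x \in J_{n,l}} p_t(o,x)^2 \ge c^2 /(2^n\,2^{l-1}a)$ uniformly over the time window.

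The final step is to sum over $t$: the window contains of order $(2^la)^2$ times, so
$$ E_o(Z_{n,l}) \ge (2^la)^2 \cdot \frac{c^2}{2^n\,2^{l-1}a} = C\,\frac{2^l a}{2^n}, $$
which is the claim. I expect the only genuine input to be Lemma \ref{lemma}, which controls the spread of the walk over a single interval at the correct time scale; once that lower bound is available, the remainder is the symmetry reduction together with Cauchy--Schwarz. The point to watch is that the Cauchy--Schwarz denominator is the \emph{interval} length $2^{l-1}a$ rather than the full segment length $b_n$: the cancellation of one power of $2^l a$ from this denominator against the $(2^la)^2$ contributed by the length of the time window is exactly what produces the single power $2^l a$ in the target bound, so the bookkeeping of these three factors is where care is needed.
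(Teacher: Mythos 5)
Your proposal is correct and follows essentially the same route as the paper: the symmetry reduction to a single interval $I_{n,l}^i$, Cauchy--Schwarz with denominator $|I_{n,l}| = 2^{l-1}a$, the input from Lemma \ref{lemma}, and summation over the $(2^la)^2$ times in the window. The bookkeeping of the three factors is exactly as in the paper's proof.
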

\proof
By symmetry we have
\begin{align*}
E_o(Z_{n,l}) &= 2^n \sum_{x \in I_{n,l}}\sum_{t=(2^la)^2}^{2(2^la)^2} p_t(o,x)^2
\geq 2^n \sum_{t=(2^la)^2}^{2(2^la)^2} \frac{1}{|I_{n,l}|}
\left(\sum_{x \in I_{n,l}} p_t(o,x)\right)^2 \\&
= 2^n \sum_{t=(2^la)^2}^{2(2^la)^2} \frac{1}{|I_{n,l}|} P_o(X_t \in I_{n,l})^2 \geq C \frac{2^l a}{2^n},
\end{align*}
where for the first inequality we used Cauchy-Schwartz and for the last one we used Lemma \ref{lemma}.
\qed

\begin{claim}
$E_o(Z_{n,l}|Z_{n,l}>0) \leq C' 2^la$.
\end{claim}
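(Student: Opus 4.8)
The plan is to reduce the conditional expectation to the expected total number of collisions of two walks launched from a common point, and then to control the latter by the on-diagonal heat-kernel bound already established for the tree.

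First I would apply the strong Markov property at the first collision time in the window. Set $\sigma = \inf\{ t \in [(2^la)^2, 2(2^la)^2] : X_t = Y_t \in J_{n,l}\}$; on $\{Z_{n,l}>0\}$ we have $\sigma \le 2(2^la)^2$. Conditioning on $\mathcal F_\sigma$ and on the common location $X_\sigma = Y_\sigma = x \in J_{n,l}$, and discarding both the restriction to $J_{n,l}$ and the requirement that later collisions lie in the window, the future contribution is dominated by the expected number of collisions of two independent walks started at $(x,x)$ run for time $T := 2(2^la)^2$. Thus $E_o(Z_{n,l}\mid Z_{n,l}>0) \le 1 + \max_{x \in J_{n,l}} E_{x,x}\big(\sum_{s=0}^{T}\1(X_s = Y_s)\big)$, and it suffices to bound this maximum by $C' 2^l a$.

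Next I would evaluate the dominating expectation by independence and reversibility. Since $E_{x,x}(\sum_{s\le T}\1(X_s=Y_s)) = \sum_{s\le T}\sum_y p_s(x,y)^2$ and the tree has degrees at most $3$, reversibility ($d(x)p_s(x,y) = d(y)p_s(y,x)$) gives $\sum_y p_s(x,y)^2 \le 3\, p_{2s}(x,x)$. Everything then rests on the uniform return bound $p_t(x,x) \le c\, t^{-1/2}$ for $x \in J_n$. This I would obtain from the birth-and-death comparison of Remark \ref{rembd}: the event $\{X_t = x\}$ forces $\{|X_t| = |x|\}$, so $p_t(x,x) \le p_t^{\text{BD}}(|x|,|x|)$, and since the invariant mass of the BD chain at level $|x|$ equals $2^n$ (the identity $p_t^{\text{BD}} = 2^n q_t^{\text{BD}}$ used in the proof of Lemma \ref{claim12}), the first bound in \eqref{e:qbd-1} yields $p_t^{\text{BD}}(|x|,|x|) = 2^n q_t^{\text{BD}}(|x|,|x|) \le c\, t^{-1/2}$, the polylogarithmic factors cancelling exactly. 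Summing, $\sum_{s=0}^{T} p_{2s}(x,x) \le 1 + c\sum_{s=1}^T s^{-1/2} \le c'\sqrt T = C' 2^l a$, which closes the argument.

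The step I expect to be the crux is this clean cancellation producing $p_t(x,x) \le c\, t^{-1/2}$ with no surviving logarithmic correction: it is precisely where the factor $2^n$ from the invariant measure must offset the $(\log|x|)^{-1/\beta}$-type gain carried by \eqref{e:qbd-1}. An alternative route for the dominating expectation is the resistance bound of Lemma \ref{lem:GZB}, namely $E_{x,x}Z_B \le 3\, \Reff(x,B^c)$ with $B$ a ball of radius comparable to $2^la$; but then one must separately rule out extra collisions arising when the walks re-enter $B$ within the remaining window, so I would prefer the direct heat-kernel computation above, which avoids that complication entirely.
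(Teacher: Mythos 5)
Your proof is correct, but it takes a genuinely different route from the paper's. The paper conditions on the first collision and then runs a ``rounds'' argument: the collisions occurring before one walk exits $A_{n,l}=I_{n,l-1}\cup I_{n,l}\cup I_{n,l+1}$ are counted by the killed Green function, i.e.\ by the effective resistance $\Reff(x,A_{n,l}^c)\le 2^{l+1}a$ (Lemma \ref{lem:GZB}), and the number of rounds inside the window has bounded expectation because each round forces a walk to travel distance $2^{l-1}a$, which costs time of order $(2^la)^2$ --- the full length of the window. You instead apply the strong Markov property at the first collision and bound the remaining contribution by $3\sum_{s\le T}p_{2s}(x,x)$ with $T=2(2^la)^2$, using the uniform on-diagonal bound $p_t(x,x)\le ct^{-1/2}$; your extraction of that bound is sound, since $p_t(x,x)\le p_t^{\mathrm{BD}}(|x|,|x|)=\mu^{\mathrm{BD}}(|x|)\,q_t^{\mathrm{BD}}(|x|,|x|)\asymp 2^n q_t^{\mathrm{BD}}(|x|,|x|)$ and the factor $2^{-n}$ in \eqref{e:qbd-1} cancels while the logarithmic ratio there is at least $1$ (alternatively, Lemma \ref{lem:GComp} applied directly on the tree with $B=B(x,b\sqrt t)$ gives the same bound). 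Both arguments ultimately exploit the same fact --- that the window length is the square of the target bound $2^la$ --- but yours does so through the heat kernel and is cleaner and more rigorous for this claim, whereas the paper's resistance-and-rounds scheme needs only the trivial tree resistance bound and, more importantly, is the version that survives in the regime $\tfrac12\le\beta<1$, where the time window has length $(2^{l+1}a)^4$ and the bare estimate $\sum_{s\le T}p_{2s}(x,x)\lesssim\sqrt T$ would give only $(2^la)^2$, far too weak. So your method is well adapted to this particular claim but would not transfer to the companion claim later in the section.
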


\proof
Since we are conditioning on the event $\{Z_{n,l}>0\}$, there is a collision on one of the subintervals $I_{n,l}$.
Starting from this point, we are counting all the collisions that happen for times $ (2^la)^2 \leq t \leq 2(2^la)^2$.

We first count the number of collisions that occur before the first time that one of the random walks exits the set
$A_{n,l} = I_{n,l-1} \cup I_{n,l} \cup I_{n,l+1}$.  By Lemma \ref{lem:GZB} this number is bounded by the
effective resistance from the starting point to $A_{n,l}^c$,
which is bounded by $2^{l+1}a$, no matter where in the interval $I_{n,l}$ the random
walks started. We then wait until the next time that both of the random walks
have a collision in one of the intervals $I_{n,l}$. Starting from there
we again wait for one of them to exit the set $A_{n,l}$,
and then we upper bound the number of collisions by $2^{l-1}a$. The total number of
rounds that we can have has expectation bounded by a constant. This is because, once a random walk
is in the interval $I_{n,l}$ it has to travel distance at least $2^{l-1}a$ in order to exit $A_{n,l}$.
Thus the time it takes has expectation at least $(2^{l-1}a)^2$. Since
we are interested only in collisions that happen in a time interval of length $(2^la)^2$ we
deduce that the total number of rounds has bounded expectation.

Hence we conclude that
\[
E_o(Z_{n,l}|Z_{n,l}>0) \leq C' 2^la.
\]
\qed

Using \eqref{obvious} we obtain
\begin{equation}
\label{basic2}
P_o(Z_{n,l}>0) \geq \frac{c}{2^n}.
\end{equation}

Let $Z_n = \sum_{l=1}^{\alpha 2^{\beta(n-1)} - 1} \1(Z_{n,l}>0)$, i.e. $Z_n$ counts the number of subintervals of $b_n$ except the first and
last one, where there is at least one collision.
Using \eqref{basic2}, we get that $E_o(Z_n) \geq c 2^{(\beta-1)n}$. We want to lower bound $P_o(Z_n >0)$ and we will use the second moment estimate
\begin{equation}
\label{moment}
P_o(Z_n > 0) \geq \frac{(E_o(Z_n))^2}{E_o(Z_n^2)}.
\end{equation}

\begin{claim}
$E_o(Z_n^2) \leq c' 2^{2(\beta-1)n}$.
\end{claim}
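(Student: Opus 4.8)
The plan is to expand the square and treat the diagonal and off‑diagonal terms separately, reducing everything to a single uniform ``one collision per level'' estimate. Writing $F_{n,l}=\{Z_{n,l}>0\}$, we have
$$E_o(Z_n^2) = \sum_{l}P_o(F_{n,l}) + 2\sum_{l<l'}P_o(F_{n,l}\cap F_{n,l'}).$$
The engine of the whole argument will be the uniform upper bound
$$P_{(w,w)}\Big(\exists\, s \in [\tfrac12(2^m a)^2,\,2(2^m a)^2]: X_s=Y_s\in J_{n,m}\Big)\ \le\ \frac{C}{2^n}, \qquad (\star)$$
valid for every level $m$ and every starting vertex $w$ lying at or below the block $J_{n,m}$ (in particular for $w=o$). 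Granting $(\star)$ with $w=o$ and $m=l$, each diagonal term is at most $C/2^n$, and since there are at most $\alpha 2^{\beta(n-1)}$ values of $l$ the diagonal sum is at most $C2^{(\beta-1)n}$, which is $\le C2^{2(\beta-1)n}$ because $\beta\ge 1$.

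For the off‑diagonal terms I would decouple by the strong Markov property at the first collision. Fix $l<l'$ and let $\sigma$ be the first time $s\in[(2^la)^2,2(2^la)^2]$ with $X_s=Y_s\in J_{n,l}$; on $F_{n,l}$ this is a finite stopping time for the pair $(X,Y)$ and $X_\sigma=Y_\sigma=:w\in J_{n,l}$. The crucial point is that the two time windows are \emph{disjoint}: since $l\le l'-1$ we have $2(2^la)^2\le\tfrac12(2^{l'}a)^2$, so after shifting by $\sigma\le 2(2^la)^2$ the target window $[(2^{l'}a)^2,2(2^{l'}a)^2]$ lands inside $[\tfrac12(2^{l'}a)^2,2(2^{l'}a)^2]$. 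Applying the strong Markov property at $\sigma$ and then $(\star)$ with $m=l'$ gives
$$P_o(F_{n,l}\cap F_{n,l'})\ \le\ P_o(F_{n,l})\cdot \sup_{w\in J_{n,l}}P_{(w,w)}\Big(\exists\, s\in[\tfrac12(2^{l'}a)^2,2(2^{l'}a)^2]:X_s=Y_s\in J_{n,l'}\Big)\ \le\ \frac{C}{2^n}\,P_o(F_{n,l}).$$
Summing, $\sum_{l<l'}P_o(F_{n,l}\cap F_{n,l'})\le \frac{C}{2^n}\sum_{l'}\sum_{l<l'}P_o(F_{n,l})\le \frac{C}{2^n}\cdot 2^{\beta(n-1)}\cdot C2^{(\beta-1)n}\le C'2^{2(\beta-1)n}$, which together with the diagonal bound proves the claim.

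The real work, and the step I expect to be the main obstacle, is establishing $(\star)$. I would prove it through the ratio $P_{(w,w)}(\cdot)\le E_{(w,w)}(N)/E_{(w,w)}(N\mid N>0)$, where $N$ is the number of collisions in $J_{n,m}$ during the window. The numerator is a first‑moment computation, $E_{(w,w)}(N)=\sum_s\sum_{z\in J_{n,m}}p_s(w,z)^2\le C\,2^m a/2^n$, which for $w=o$ follows directly from \eqref{birthdeath} and the transition bounds of Lemma \ref{claim12}; the genuine difficulty is running the same estimate from an arbitrary lower starting vertex $w$, where the tree is no longer symmetric about $w$ and one must compare $p_s(w,z)$ with $p_s(o,z)$ through the birth‑and‑death comparison of Remark \ref{rembd}. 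For the denominator I would condition on the first collision at a point $x\in I_{n,m}$ and count collisions before exiting the enlarged block $A_{n,m}=I_{n,m-1}\cup I_{n,m}\cup I_{n,m+1}$; because $A_{n,m}$ surrounds $I_{n,m}$, every such $x$ lies at distance $\ge c\,2^m a$ from $A_{n,m}^c$, so $\Reff(x,A_{n,m}^c)\ge c\,2^m a$ and Lemma \ref{lem:GZB} (via \eqref{e:zr}) gives $E_{(w,w)}(N\mid N>0)\ge c\,2^m a$. The one subtlety to watch is timing: I must ensure that a positive fraction of these pre‑exit collisions actually fall inside the remaining part of the window, which holds because the exit time from $A_{n,m}$ is of order $(2^m a)^2$, comparable to the window length. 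Dividing then yields $(\star)$.
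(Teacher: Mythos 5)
Your decomposition into diagonal and off-diagonal terms, and your diagonal bound $P_o(F_{n,l})\le C/2^n$ via the ratio $E(N)/E(N\mid N>0)$, are fine and essentially match the paper. The gap is in the off-diagonal terms: your key estimate $(\star)$ is false for the starting points you need it for. After the strong Markov step the pair sits at $w\in I_{n,l}^{i_0}$, i.e.\ on one \emph{specific} branch, and there are no branch points between $I_{n,l}^{i_0}$ and $I_{n,l'}^{i_0}$ (they are sub-intervals of the same segment of length $b_n$). Hence each walk reaches $I_{n,l'}^{i_0}$ before retreating past the previous branch point with probability of order $2^{-k}$ (gambler's ruin, $k=l'-l$), both do so with probability of order $4^{-k}$, and conditionally on that they collide with probability bounded below by a constant. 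So $P_{(w,w)}\bigl(\exists s:\,X_s=Y_s\in J_{n,l'}\bigr)\ge c\,4^{-k}$, which for bounded $k$ is a constant, not $O(2^{-n})$. The factor $2^{-n}$ in your heuristic numerator computation $E_{(w,w)}(N)\le C\,2^{l'}a/2^n$ comes from the walk having to choose among $2^n$ branches from the root; that factor simply is not there when the walk starts on the branch already.

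The fix is the paper's: bound $P_o(F_{n,l+k}\mid F_{n,l})$ by splitting according to whether at least one walk reaches $J_{n,l+k}$ before returning to $o$, or both return to $o$ first. The first event has probability at most $c/2^k$ by the gambler's-ruin computation (summed over the possible number of branch points retraced); the second resets both walks at the root, where the symmetric bound $\sum_i P_o(A^i_{n,l+k})^2\le c/2^n$ applies. This gives $P_o(F_{n,l}\cap F_{n,l+k})\le \frac{c}{2^n}\bigl(\frac{c'}{2^k}+\frac{c''}{2^n}\bigr)$, and the extra $2^{-k}$ contribution sums over $k$ to a constant and over $l$ to $c\,2^{(\beta-1)n}\le c\,2^{2(\beta-1)n}$ precisely because $\beta\ge 1$. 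So your target bound survives, but only after replacing $(\star)$ by this two-case estimate.
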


\proof
We have that
\begin{align}
\label{second}
E_o(Z_n^2) \leq 2\sum_{l=1}^{\alpha 2^{\beta(n-1)} - 1} P_o(Z_{n,l}>0) + \sum_{l=1}^{\alpha 2^{\beta(n-1)} - 1}
\sum_{k=2}^{\alpha 2^{\beta(n-1)} - 1 - l} P_o(Z_{n,l}>0,Z_{n,l+k}>0).
\end{align}

Write $A_{n,l}^i$ for the event that $I_{n,l}^i$ is visited
by one simple random walk in the time interval we are interested in.
Let
$$ N = \sum_{i=1}^{2^n} \1(A_{n,l}^i). $$
Then $E(N) \leq c$, for a positive finite constant $c$, since once such an
interval is visited then the walk has to travel distance of order
$2^la$ in order to reach a branch point and then visit another interval and that time has
expectation greater than $c'(2^la)^2$.
Thus, using the symmetry of the tree, we have that for any $i$,
\begin{equation} \label{e:ani}
P_o(A_{n,l}^i)\leq \frac{c}{2^n}.
\end{equation}
Hence,
\[
P_o(Z_{n,l}>0) \leq \sum_{i=1}^{2^n} P_o(A_{n,l}^i)^2 \leq \frac{c}{2^n},
\]
and thus the first term on the right hand side of \eqref{second} is upper bounded by $2^{(\beta-1)n}$.

For the second term we have $P_o(Z_{n,l}>0,Z_{n,l+k}>0) = P_o(Z_{n,l+k}>0|Z_{n,l}>0) P_o(Z_{n,l}>0)$ and
\begin{align*}
P_o(Z_{n,l+k} &>0 |Z_{n,l}>0)\\
& =P_o(Z_{n,l+k}>0, \text{ at least 1 of the RWs hits $J_{n,l+k}$ before o }|Z_{n,l}>0)\\
&\qquad + P_o(Z_{n,l+k}>0, \text{ both hit $o$ before $J_{n,l+k}$ }|Z_{n,l}>0).
\end{align*}

To upper bound the first term, let $p_r$ denote the probability that the random walk $X$ starting from the
set $J_{n,l}$ goes back through exactly  $r$ branch points towards the origin before it
first hits $J_{n, l+k}$.
Then the probability starting from $J_{n,l}$ that $X$ reaches $J_{n,l+k}$ before hitting $o$ is bounded
from above by:
\begin{align}
\nonumber
\sum_{r=1}^{n} p_r &P(\text{starting from the $(n-r)$-th b.p. $X$  hits $J_{n,l+k}$ } \\
\label{gambler}
& \qquad \qquad \qquad \text{before it hits the $(n-r-1)$-th b.p.}).
\end{align}
The probability appearing in the sum above can be computed as follows: starting from the $(n-r)$-th b.p.
there are $2^r$ intervals $I_{n,l+k}$ that we can hit before hitting the $(n-r-1)$-th b.p..
We fix one such interval. Then the probability that we hit that before the $(n-r-1)$-th b.p. is given by
the gambler's ruin probability and is equal up to constants to
$\frac{1}{2^k \left(b_{n-1}\right)^{1-2^{-\beta r}}}$, so the above sum becomes
\begin{align}
\sum_{r=1}^{n} p_r \frac{2^r}{2^k \left(b_{n-1}\right)^{1-2^{-\beta r}}}
\le \sum_{r=1}^{n}  \frac{2^r}{2^k \left(b_{n-1}\right)^{1-2^{-\beta r}}}
\leq \frac{c}{2^k}.
\end{align}

For the second term we have, using \eqref{e:ani},
\begin{align*}
P_o(Z_{n,l+k}>0, \text{ both hit $o$ before $J_{n,l+k}$ }|Z_{n,l}>0) \leq
\sum_{i=1}^{2^n} P_o(A_{n,l+k}^i)^2 \leq \frac{c}{2^n}.
\end{align*}

So putting these estimates together we get
\begin{align*}
P_o(Z_{n,l}>0,Z_{n,l+k}>0) \leq \frac{c}{2^n} \left( \frac{c'}{2^k} + \frac{c''}{2^n} \right).
\end{align*}

Hence $E_o(Z_n^2) \leq c' 2^{2(\beta-1)n}$, since $\beta>1$.
\qed

Using \eqref{moment} we obtain that
\[
P_o(Z_n > 0) \geq c>0.
\]
Hence by Corollary \ref{cor:fsets} we have $P(Z=\infty) =1$; this completes the proof of Theorem \ref{trees}(c) in the case $\beta \geq 1$.


\framebox{$\frac{1}{2} \leq \beta <1$}

We now define
\begin{align*}
Z_{n,l} = \sum_{t=2(2^{l+1}a)^2}^{(2^{l+1}a)^4} \1(X_t = Y_t \in J_{n,l}),
\end{align*}
i.e. we are now looking at much longer time intervals.
We want to upper bound the probability that there is a collision in the set $J_{n,l}$, i.e. $P_o(Z_{n,l}>0)$.
To do so we are going to use again the equality
\begin{align}
\label{tilde}
P_o(Z_{n,l}>0) = \frac{E_o(Z_{n,l})}{E_o(Z_{n,l}|Z_{n,l}>0)},
\end{align}
so we need to upper bound $E_o(Z_{n,l})$ and lower bound $E_o(Z_{n,l}|Z_{n,l}>0)$. To do so, we are going to obtain upper and lower bounds
for the transition probabilities in $t$ steps.

\begin{lemma}
\label{lowerbound}
$p_t(o,x) \geq \frac{1}{2^n}\frac{c_1}{\sqrt{t}} \frac{\left(\log{x}\right)^{\frac{1}{\beta}}}{
\left(\log(c_2 \sqrt{t})\right)^{\frac{1}{\beta}}}$, for all $x \in I_{n,l}$ and all  $t > 2 (2^{l+1} a)^2$.
\end{lemma}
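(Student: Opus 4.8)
The plan is to pass to the birth-and-death chain $\tilde X$ of Remark \ref{rembd} and prove a matching on-diagonal-order lower bound for its transition density. By \eqref{birthdeath} and the identity $p_t(o,x)=q_t^{\text{BD}}(o,|x|)$ established at the end of the proof of Lemma \ref{claim12}, and since for $x\in J_n$ one has $\log|x|\asymp 2^{\beta n}$, hence $2^{-n}(\log x)^{1/\beta}\asymp 1$ and $(\log(c_2\sqrt t))^{1/\beta}\asymp(\log t)^{1/\beta}$, the asserted inequality is equivalent, up to the choice of $c_1,c_2$, to
\be
 q_t^{\text{BD}}(o,r)\ \ge\ \frac{c}{\sqrt t\,(\log t)^{1/\beta}},\qquad r:=|x|,
\ee
valid whenever $r\le\tfrac12\sqrt t$. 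Since $x\in I_{n,l}$ gives $r\asymp 2^l a$, the hypothesis $t>2(2^{l+1}a)^2$ guarantees exactly this range. Throughout write $\mu$ for the reversible invariant measure of $\tilde X$, normalised by $\mu(o)=1$; then $\mu(s)=2^{n}$ for $s$ in the $n$-th level, so $\mu(s)\asymp(\log s)^{1/\beta}$, and summing over levels yields the volume estimate $\mu(\{s:s\le\rho\})\asymp\rho(\log\rho)^{1/\beta}$.

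First I would establish the on-diagonal bound $q_s^{\text{BD}}(y,y)\ge c\,s^{-1/2}(\log s)^{-1/\beta}$ for every $y$ with $|y|\le\sqrt s$. Set $B=\{z:|z-y|\le C\sqrt s\}$. The confinement estimate $P_y(\tau'_{B}\ge s)\ge\tfrac12$ for $C$ large is precisely the exit-time bound already used in the proof of Lemma \ref{claim12}: there is a branch of length $\gtrsim C\sqrt s$ inside $B$, and a simple random walk started at its midpoint stays in it for time $s$ with probability close to $1$. Applying Cauchy-Schwarz to $\tfrac12\le P_y(\tilde X_s\in B)=\sum_{z\in B}q_s^{\text{BD}}(y,z)\mu(z)$ gives
\be
 \tfrac14\ \le\ \mu(B)\sum_{z}q_s^{\text{BD}}(y,z)^2\mu(z)\ =\ \mu(B)\,q_{2s}^{\text{BD}}(y,y).
\ee
Since $|y|\le\sqrt s$ forces $B\subset\{z:z\le(C+1)\sqrt s\}$ and hence $\mu(B)\le C'\sqrt s(\log s)^{1/\beta}$, the on-diagonal lower bound follows.

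Next I would combine this with a first-hitting decomposition. Let $\tau'_r$ be the first time $\tilde X$ reaches level $r$. By the strong Markov property and $P_r(\tilde X_{t-s}=r)=\mu(r)\,q_{t-s}^{\text{BD}}(r,r)$,
\be
 P_o(\tilde X_t=r)\ \ge\ \sum_{s\le t/4}P_o(\tau'_r=s)\,P_r(\tilde X_{t-s}=r)\ \ge\ \mu(r)\,\frac{c}{\sqrt t\,(\log t)^{1/\beta}}\,P_o(\tau'_r\le t/4),
\ee
where the inner bound is the on-diagonal estimate of the previous paragraph at $y=r$, legitimate because $t-s\ge 3t/4$ gives $r\le\tfrac12\sqrt t\le\sqrt{t-s}$ and $\log(t-s)\asymp\log t$. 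Dividing by $\mu(r)$ gives $q_t^{\text{BD}}(o,r)\ge c\,t^{-1/2}(\log t)^{-1/\beta}P_o(\tau'_r\le t/4)$. Finally $P_o(\tau'_r\le t/4)\ge c>0$: coupling $\tilde X$ with a simple random walk $S$ so that $\tilde X\ge S$, as in the proof of Lemma \ref{lemma} using the outward drift at the branch points, gives $\tau'_r\le\tau^S_r$, and since $r\le\tfrac12\sqrt t$ the probability that $S$ reaches level $r$ by time $t/4$ is bounded below by a positive constant. This yields the required bound.

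The main technical input is the confinement/volume pair in the second paragraph: one needs the ball $B(y,C\sqrt s)$ to have $\mu$-volume of the precise order $\sqrt s(\log s)^{1/\beta}$ and the walk to remain in it up to time $s$ with probability $\ge\tfrac12$. This is where the length scale $b_n=2^{2^{\beta n}}$ and the correction $(\log\,)^{1/\beta}$ enter, and it reuses the branch-length and exit estimates of Lemma \ref{claim12}. A minor but unavoidable bookkeeping point is parity: $\tilde X$ has period $2$, so $P_r(\tilde X_{t-s}=r)=0$ unless $t-s$ is even, while $\tau'_r\equiv r\pmod 2$ always; the sums above must be restricted to $s\equiv r\equiv t\pmod 2$, so the stated inequality is read for $t\equiv|x|\pmod 2$ (for the opposite parity $p_t(o,x)=0$), and this restriction costs only a constant factor.
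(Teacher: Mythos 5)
Your proposal is correct and follows essentially the same route as the paper: reduce to the birth-and-death chain, lower-bound the on-diagonal term at level $r$ by Cauchy--Schwarz over a ball of radius $\asymp\sqrt t$ combined with the confinement estimate from Lemma \ref{claim12}, and multiply by the probability of hitting level $r$ well before time $t$, which is bounded below by comparison with a simple random walk since $t\gg r^2$. Your phrasing via $q_{2s}^{\text{BD}}(y,y)\ge 1/(4\mu(B))$ is just the paper's weighted Cauchy--Schwarz written in terms of the invariant measure, and your explicit handling of parity is a minor refinement of the paper's $p_t(x,x)+p_{t-1}(x,x)$ device.
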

\proof
We will again show the lower bound for the birth and death chain and then dividing through by $2^n$ we will get the
transition probability for the tree.
\begin{align*}
p_t^{\text{BD}}(0,x) = P_0(T_x <t, X_t =x)
&\geq P_0(T_x<t) \min_{2s\leq t}p_{2s}(x,x) \\
& \geq P_0(T_x<t) (p_t^{\text{BD}}(x,x) + p_{t-1}^{\text{BD}}(x,x)),
\end{align*}
since $p_{2s}(x,x)$ is a decreasing function of $s$.
Let $Q_t = \{ y \in \Z_+: y \leq x+c\sqrt{t} \}$. Then by Cauchy-Schwartz we have
\begin{align*}
p_{2t}^{\text{BD}}(x,x) &= \sum_{y} p_t^{\text{BD}}(x,y) p_t^{\text{BD}}(y,x)
= \sum_y p_t^{\text{BD}}(x,y)^2 \frac{d(x)}{d(y)} \\
& \geq \sum_{y \in Q_t} p_t^{\text{BD}}(x,y)^2 \frac{d(x)}{d(y)}
\geq \frac{d(x)}{|Q_t|}\left(\sum_{y \in Q_t} \frac{p_t^{\text{BD}}(x,y)}{\sqrt{d(y)}}\right)^2.
\end{align*}
For $y\in Q_t$ we have $d(y)\leq \left(\log(x+c\sqrt{t})\right)^{\frac{1}{\beta}}$. Also $|Q_t| = x + c\sqrt{t} \leq c_1 \sqrt{t}$, so
\begin{align*}
p_{2t}^{\text{BD}}(x,x)
\geq \frac{\left(\log{x}\right)^{\frac{1}{\beta}}}{c_1\sqrt{t}\left(\log(c_1\sqrt{t})\right)^{\frac{1}{\beta}}}
P_x(X_t \in Q_t)^2
\end{align*}
and $P_x(X_t \in Q_t) > c'>0$ by the same argument we used in the proof of Claim \ref{claim12},
i.e. by bounding it a by simple random walk on the last segment with no branch points.
Also $P_0(T_x<t)\geq \frac{1}{2}$, since $t > 2x^2$ and we can bound the
birth and death chain from below by a simple random walk on $\Z_+$.
\qed

\begin{claim}
$E_o(Z_{n,l}) \geq c \frac{|I_{n,l}| \log{|I_{n,l}|}}{2^n}$.
\end{claim}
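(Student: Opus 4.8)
The plan is to mirror the first-moment computation already carried out in the $\beta\ge1$ regime, but now feeding in the sharp pointwise lower bound on $p_t(o,x)$ from Lemma \ref{lowerbound}, and to exploit the fact that the time window $[2(2^{l+1}a)^2,(2^{l+1}a)^4]$ has been chosen precisely so that the logarithmic correction factors stay comparable throughout. First I would use the spherical symmetry of the tree to reduce to a single branch, writing
$$
E_o(Z_{n,l}) = \sum_{x \in J_{n,l}} \sum_{t=2(2^{l+1}a)^2}^{(2^{l+1}a)^4} p_t(o,x)^2
= 2^n \sum_{x \in I_{n,l}} \sum_{t=2(2^{l+1}a)^2}^{(2^{l+1}a)^4} p_t(o,x)^2,
$$
where $I_{n,l}$ denotes one representative copy of the $l$-th subinterval, so that $|I_{n,l}| = 2^{l-1}a$ and $|x| \asymp 2^l a$ for $x \in I_{n,l}$.

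Next I would insert the bound of Lemma \ref{lowerbound}, which is legitimate since every $t$ in the sum satisfies $t > 2(2^{l+1}a)^2$, yielding
$$
p_t(o,x)^2 \ge \frac{c_1^2}{2^{2n}}\,\frac1t\left(\frac{(\log x)^{1/\beta}}{(\log(c_2\sqrt t))^{1/\beta}}\right)^2 .
$$
The key observation, and the reason for the particular choice of time window, is that for $x \in I_{n,l}$ we have $\log x \asymp \log(2^l a)$, while as $t$ ranges over $[2(2^{l+1}a)^2,(2^{l+1}a)^4]$ the quantity $\sqrt t$ ranges over $[\sqrt2\,2^{l+1}a,\,(2^{l+1}a)^2]$, so $\log(c_2\sqrt t)$ lies between $\log(2^l a)$ and a fixed constant multiple thereof. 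Hence the ratio $(\log x/\log(c_2\sqrt t))^{2/\beta}$ is bounded below by a positive constant uniformly over the whole window. Pulling that constant out and using $\sum_{x \in I_{n,l}} 1 = |I_{n,l}|$ leaves
$$
E_o(Z_{n,l}) \ge \frac{c}{2^n}\,|I_{n,l}| \sum_{t=2(2^{l+1}a)^2}^{(2^{l+1}a)^4} \frac1t .
$$

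Finally I would evaluate the harmonic sum by $\sum_{t=T_1}^{T_2} t^{-1} \asymp \log(T_2/T_1)$ with $T_2/T_1 = (2^{l+1}a)^2/2$, so the sum is $\asymp \log(2^l a) \asymp \log|I_{n,l}|$; combining gives $E_o(Z_{n,l}) \ge c\,|I_{n,l}|\log|I_{n,l}|/2^n$, as claimed. I expect the only genuinely delicate point to be the uniform comparability $\log(c_2\sqrt t) \asymp \log x$ across the entire time window — everything hinges on the window reaching the fourth power $(2^{l+1}a)^4$, since that is exactly what keeps $\log\sqrt t$ within a bounded factor of $\log x$ and thereby produces a full logarithmic gain from the $\sum 1/t$; the remainder is the same symmetry-plus-Cauchy–Schwarz bookkeeping used for the analogous claim in the $\beta\ge1$ case.
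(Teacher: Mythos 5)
Your proposal is correct and follows the same route as the paper's own proof: reduce to a single branch by symmetry (picking up the factor $2^n$), square the lower bound of Lemma \ref{lowerbound}, observe that $\log x$ and $\log(c_2\sqrt t)$ are both comparable to $\log|I_{n,l}|$ throughout the prescribed time window, and extract the logarithm from $\sum_t t^{-1}$ over $[2(2^{l+1}a)^2,(2^{l+1}a)^4]$. The paper states this in one compressed display; you have simply made the uniform comparability of the logarithmic factors explicit, which is indeed the only point requiring care.
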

Using the lower bound for the transition probabilities we get
\begin{align*}
E_o(Z_{n,l}) \geq 2^n\sum_{t=2|I_{n,l}|^2}^{|I_{n,l}|^4} \sum_{x \in I_{n,l}} p_t(o,x)^2
\geq \frac{c}{2^n}\sum_{x \in I_{n,l}} \sum_{t=|I_{n,l}|^2}^{|I_{n,l}|^4} \frac{1}{t}
\frac{(\log{x})^{\frac{2}{\beta}}}{(\log{t})^{\frac{2}{\beta}}}
\geq c\frac{|I_{n,l}|\log{|I_{n,l}|}}{2^n}.
\end{align*}
\qed

\begin{claim}
$E_o(Z_{n,l}|Z_{n,l}>0) \leq c |I_{n,l}|$.
\end{claim}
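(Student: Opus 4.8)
The plan is to use the strong Markov property to turn the conditional expectation into a uniform bound on the expected number of \emph{future} collisions, and then to estimate that expectation by a two-regime heat-kernel computation. Write $R = |I_{n,l}| \asymp 2^l a$, and recall that the window defining $Z_{n,l}$ ends at $t_1 = (2^{l+1}a)^4 \asymp R^4$. Let $\sigma$ be the first collision in $J_{n,l}$ inside this window and $W\in J_{n,l}$ its location. Since collisions before $\sigma$ do not exist, the strong Markov property at $\sigma$ gives
\[
E_o(Z_{n,l}\mid Z_{n,l}>0) \;\le\; \sup_{w\in J_{n,l}} E_{w,w}\Big[\sum_{s=0}^{R^4}\1(X_s=Y_s\in J_{n,l})\Big] \;=\; \sup_{w\in J_{n,l}}\sum_{s=0}^{R^4} u_w(s),
\]
where $u_w(s)=\sum_{x\in J_{n,l}} p_s(w,x)^2$ is the chance that two walks started at $w$ collide in $J_{n,l}$ at time $s$. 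So it suffices to prove $\sum_{s=0}^{R^4}u_w(s)\le cR$ uniformly in $w\in J_{n,l}$.

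The key device is the factorisation $u_w(s)\le \big(\max_{x\in J_{n,l}} p_s(w,x)\big)\,P_w(X_s\in J_{n,l})$, fed by two inputs. First, $J_{n,l}$ is exactly the radial shell $\{x:|x|\in[2^{l-1}a,2^l a]\}$, so $P_w(X_s\in J_{n,l})=P_{|w|}(X'_s\in[2^{l-1}a,2^l a])$ is a question about the birth-and-death chain $X'$ of Remark \ref{rembd}; bounding its density by $c/\sqrt s$ (Lemma \ref{lem:GComp} with a radius-$\sqrt s$ ball, using $\Reff\le\sqrt s$ and $P(\tau\ge s)\ge c$) and summing over the shell of width $\asymp R$ yields $P_w(X_s\in J_{n,l})\le \min(1,cR/\sqrt s)$. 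Second, by Cauchy--Schwarz $p_s(w,x)\le c\sqrt{p_s(w,w)\,p_s(x,x)}$, so I need the on-diagonal bound at a general $y\in J_{n,l}$. This has two regimes: for all $s$, Lemma \ref{lem:GComp} with a radius-$\sqrt s$ ball gives $p_s(y,y)\le c/\sqrt s$; while for $s\ge |y|^2\asymp R^2$ the walk has reached the branch points, and the method of Lemma \ref{claim12} — a comparison ball reaching back past the base branch point, where each branch point halves the effective resistance — upgrades this to $p_s(y,y)\le c\,s^{-1/2}\big(\log(|y|+c\sqrt s)\big)^{-1/\beta}$.

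Feeding these in, the sum splits at $s=R^2$. For $s\le R^2$ the walk is confined to a single branch and behaves one-dimensionally, so $u_w(s)\le c/\sqrt s$ and $\sum_{s\le R^2}u_w(s)\le cR$; this dominant term already has the right order. For $R^2\le s\le R^4$ we have $\log(|y|+c\sqrt s)\asymp\log R$, so $\max_x p_s(w,x)\le c\,s^{-1/2}(\log R)^{-1/\beta}$, whence
\[
\sum_{s=R^2}^{R^4} u_w(s) \;\le\; cR(\log R)^{-1/\beta}\sum_{s=R^2}^{R^4}\frac1s \;\le\; cR(\log R)^{\,1-1/\beta}.
\]
Since $\log R\asymp 2^{\beta(n-1)}$ and $\beta<1$ forces $1-1/\beta<0$, the factor $(\log R)^{1-1/\beta}\le 1$, so the tail is $\le cR$ (indeed $o(R)$). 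This is precisely where the hypothesis $\beta<1$ and the long window $R^4$ are used. Combining the two ranges gives $\sum_{s=0}^{R^4}u_w(s)\le cR=c|I_{n,l}|$.

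I expect the main obstacle to be the improved on-diagonal bound $p_s(y,y)\le c\,s^{-1/2}(\log(|y|+c\sqrt s))^{-1/\beta}$ for $s\gtrsim|y|^2$ at a \emph{generic} $y\in J_{n,l}$, rather than at the root as in Lemma \ref{claim12}: one must choose the comparison ball so that the effective-resistance reduction from crossing $\asymp \tfrac1\beta\log_2\log_2\sqrt s$ branch points is captured, and verify that the exit-time lower bound $P_y(\tau_B\ge s)\ge c$ persists in this regime. By contrast, the radial occupation estimate $P_w(X_s\in J_{n,l})\le cR/\sqrt s$ is routine once one projects onto the birth-and-death chain of Remark \ref{rembd}.
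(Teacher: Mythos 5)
Your skeleton (strong Markov property at the first collision, then a uniform bound on $\sup_{w\in J_{n,l}}\sum_{s\le R^4}\sum_{x\in J_{n,l}}p_s(w,x)^2$) is sensible, but the estimate you need in the tail regime is false, and the failure is exactly at the point you flagged as the "main obstacle". For $y\in J_{n,l}$ one has $|y|\asymp R$, and the method of Lemma \ref{claim12} does not yield $p_s(y,y)\le c\,s^{-1/2}\bigl(\log(|y|+c\sqrt s)\bigr)^{-1/\beta}$ at such a $y$; as \eqref{e:qbd-1} shows, it yields $p_s(y,y)\le c\,s^{-1/2}\bigl(\log(|y|+c\sqrt s)/\log|y|\bigr)^{-1/\beta}$, because the resistance gain $2^{-m}$ comes only from the $m$ branch points lying \emph{between} radius $|y|$ and radius $|y|+c\sqrt s$, with $m\asymp\beta^{-1}\bigl(\log_2\log_2(|y|+c\sqrt s)-\log_2\log_2|y|\bigr)$. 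Since the branch lengths are doubly exponential, for $R^2\le s\le R^4$ one crosses only $m\le 2/\beta=O(1)$ branch points between radius $R$ and radius $R^2$, so the correct on-diagonal bound at a generic $y\in J_{n,l}$ in your tail regime is merely $p_s(y,y)\le c/\sqrt s$, with no logarithmic gain. Feeding that into your factorisation gives $\sum_{s=R^2}^{R^4}u_w(s)\le cR\sum_{s=R^2}^{R^4}s^{-1}\asymp R\log R$, and $\log R\asymp 2^{\beta(n-1)}$, so you overshoot the target $c|I_{n,l}|$ by a factor that grows with $n$ and destroys the subsequent use of the claim.

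The decay for $s\in[R^2,R^4]$ must come from a dichotomy that your uniform bound $\max_x p_s(w,x)\cdot P_w(X_s\in J_{n,l})$ erases. Either the walks stay above the branch point at radius $a_n$ (which lies at distance $\asymp R$ below $w$), in which case the relevant kernel is the killed one, of order $R^2 s^{-3/2}$ for $x$ in $w$'s own component, and $\sum_{s\ge R^2}R\,(R^2s^{-3/2})^2\asymp R$; or a walk descends past the branch points toward the root, and then to collide again in $J_{n,l}$ it must re-select the correct branch among $2^n$, producing the factor $2^{-n}$ that turns $|I_{n,l}|\log|I_{n,l}|$ into $|I_{n,l}|(l+2^{\beta(n-1)})/2^n\le c|I_{n,l}|$ (here $\beta<1$ is used). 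This is precisely what the paper's argument captures: collisions per "round" are bounded by the effective resistance from the collision point to $A_{n,l}^c$, hence by $c\,2^l a$ (Lemma \ref{lem:GZB}); the number of rounds before a walk hits the root is geometric by the gambler's-ruin estimate \eqref{gambler}; and the post-root collisions are bounded through the birth-and-death chain with the explicit $2^{-n}$. As written, your argument does not close, and repairing it essentially forces you back to a decomposition of this type.
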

\proof
Since we are conditioning on the event $\{Z_{n,l}>0\}$, there is a collision on one of the subintervals $I_{n,l}$.
Starting from this point, we are counting all the collisions that happen for times $ 2(2^la)^2 \leq t \leq (2^la)^4$.

We first count the number of collisions that occur before the first time that one of the random walks exits the set
$A_{n,l} = J_{n,l-1} \cup J_{n,l} \cup J_{n,l+1}$, for $l\geq 1$.  This number is up to constants equal to the effective resistance from the starting point
to $A_{n,l}^c$, which is bounded by a constant times $2^{l}a=|I_{n,l}|$, no matter where in the interval $I_{n,l}$ they started from. We define a round as follows:
it starts when there is a collision and it ends when one of the walks exits the set $A_{n,l}$. The number of rounds we have before either of
the two random walks hits zero has bounded expectation. This is because, starting from $I_{n,l}$ the probability that
after exiting $A_{n,l}$ we visit the root before returning to the set $J_{n,l}$ is greater than a constant, for $n \geq n_0$. This follows by bounding the complementary probability by the sum appearing in \eqref{gambler}.
Hence the number of rounds before
hitting the root has a Geometric distribution, so it has bounded expectation. The number of collisions per such round is bounded
from above by $c|I_{n,l}|$ as we argued above.

Hence so far we have considered only those rounds where none of the walks  hits the root before returning to $I_{n,l}$. For the
total number of collisions though we have to consider also those that occur after one of the walks hits the root. But this number will be bounded
by the total number of collisions that occur in $J_{n,l}$ in the time interval of interest. Since one of the walks starts from the root,
if we count the total number of collisions that happen in $J_{n,l}$ for the birth and death chain, then by uniformity we have
to divide through by $2^n$ to get the total number of collisions on the tree.

For the birth and death chain the number of collisions when one walk starts from 0 and the other one from $y$ will be bounded by
\begin{align}
\label{collisionbd}
\sum_{t=2(2^{l+1}a)^2}^{(2^{l+1}a)^4} \sum_{x \in I_{n,l}} p_t^{\text{BD}}(0,|x|) p_t^{\text{BD}}(y,|x|).
\end{align}

We have that $p_t^{\text{BD}}(y,z) = q_t^{\text{BD}}(y,z) 2^{n(z)}$, where we recall $n(z)$ is the number of branch points between 0 and $z$. So by Cauchy Schwartz we get that $p_t^{\text{BD}}(y,z) \leq 2^{n(z)} \sqrt{q_t^{\text{BD}}(y,y)} \sqrt{q_t^{\text{BD}}(z,z)} $ and thus using \eqref{e:qbd-1} we obtain that  $p_t^{\text{BD}}(y,x) \leq \frac{c(\log{x})^{\frac{1}{\beta}}}{\sqrt{t} \log{t}}$, so
the sum \eqref{collisionbd} is bounded from above by $|I_{n,l}| \log{|I_{n,l}|}$, hence transferring back to the tree we get
that
\begin{align*}
E_o(Z_{n,l}|Z_{n,l}>0) \leq c|I_{n,l}| + \frac{|I_{n,l}| \log{|I_{n,l}|}}{2^n} = c|I_{n,l}| + \frac{|I_{n,l}| (l+2^{\beta(n-1)})}{2^n}
\leq c' |I_{n,l}|,
\end{align*}
since $\beta <1$ and $l < 2^{\beta(n-1)}$.
\qed

Hence using \eqref{tilde} we get that
\begin{align}
\label{basic1}
P_o(Z_{n,l}>0) \geq c \frac{2^{\beta(n-1)}}{2^n}
\end{align}

Let $Z_n = \sum_{l=1}^{\alpha 2^{\beta(n-1)} - 1} \1(Z_{n,l}>0)$, i.e. $Z_n$ counts the number of subintervals of $b_n$ except the first and
last one, where there is at least one collision.
Using \eqref{basic1}, we get that $E_o(Z_n) \geq c 2^{(2\beta-1)n}$. We want to lower bound $P_o(Z_n >0)$. To this end we are going to use
the second moment method, i.e.
\begin{equation}
\label{moment2}
P_o(Z_n > 0) \geq \frac{(E_o(Z_n))^2}{E_o(Z_n^2)}.
\end{equation}

\begin{claim}
\label{notation}
$E_o(Z_n^2) \leq c' 2^{2(2\beta-1)n}$.
\end{claim}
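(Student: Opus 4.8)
The plan is to run the second--moment estimate \eqref{moment2} on $Z_n=\sum_l\1(Z_{n,l}>0)$ in the same spirit as the case $\beta\ge1$, but accounting for the much longer time windows used here. Since $Z_n$ is a sum of indicators,
\begin{equation*}
E_o(Z_n^2)=\sum_l P_o(Z_{n,l}>0)+2\sum_{l<m}P_o(Z_{n,l}>0,\,Z_{n,m}>0),
\end{equation*}
so it suffices to show the diagonal sum is $O(2^{(2\beta-1)n})$ and the off--diagonal sum is $O(2^{2(2\beta-1)n})$; since $2\beta-1\ge0$ the latter dominates and yields the claim, which together with $E_o(Z_n)\ge c\,2^{(2\beta-1)n}$ from \eqref{basic1} gives $P_o(Z_n>0)\ge c>0$ and hence, via Corollary \ref{cor:fsets}, Theorem \ref{trees}(c) in this regime.

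First I would establish the single--interval bound
\begin{equation*}
P_o(Z_{n,l}>0)\le C\,2^{\beta(n-1)}/2^n .
\end{equation*}
Writing $P_o(Z_{n,l}>0)=E_o(Z_{n,l})/E_o(Z_{n,l}\mid Z_{n,l}>0)$, the denominator is $\ge c\,|I_{n,l}|$: by Lemma \ref{lem:GZB} and \eqref{e:zr} the collisions occurring after the first one and before a walk leaves $A_{n,l}$ already number at least $\tfrac12 R_{\text{eff}}(x,A_{n,l}^c)\ge c\,|I_{n,l}|$. For the numerator I would repeat the computation of the preceding claim on $E_o(Z_{n,l})$, but feed in the upper heat--kernel bound \eqref{e:ptxx} of Lemma \ref{claim12} in place of the lower bound of Lemma \ref{lowerbound}. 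For $x\in I_{n,l}$ and $t$ in the window $[2(2^{l+1}a)^2,(2^{l+1}a)^4]$ one has $\log t\asymp\log(|x|+c_2\sqrt t)\asymp 2^{\beta(n-1)}$, so $(\log t)^{1/\beta}\asymp(\log(|x|+c_2\sqrt t))^{1/\beta}\asymp 2^{n}$ and thus $p_t(o,x)^2\le c\,t^{-1}2^{-2n}$. Summing over the $|I_{n,l}|$ sites of $I_{n,l}$ and over $t$ (the $\sum_t t^{-1}$ contributing a factor $\asymp\log|I_{n,l}|$) gives $E_o(Z_{n,l})\le C|I_{n,l}|\log|I_{n,l}|/2^n$; dividing and using $\log|I_{n,l}|\asymp 2^{\beta(n-1)}$ proves the bound. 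Summed over the $\asymp2^{\beta(n-1)}$ values of $l$, the diagonal term is $O(2^{(2\beta-1)n})$.

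For the off--diagonal terms I would factor $P_o(Z_{n,l}>0,Z_{n,l+k}>0)=P_o(Z_{n,l+k}>0\mid Z_{n,l}>0)\,P_o(Z_{n,l}>0)$ and bound the conditional probability exactly as for $\beta\ge1$, splitting according to whether at least one walk reaches $J_{n,l+k}$ directly from $J_{n,l}$ without first returning to the root, or both walks pass through the root in between. The first alternative is controlled by the gambler's--ruin estimate \eqref{gambler}, which is time--independent and so carries over to give a bound $c/2^k$; the second restarts both walks at $o$ and is dominated by $P_o(Z_{n,l+k}>0)\le C2^{\beta(n-1)}/2^n$ from the previous step. Hence $P_o(Z_{n,l}>0,Z_{n,l+k}>0)\le C2^{\beta(n-1)}2^{-n}\big(c\,2^{-k}+C2^{\beta(n-1)}2^{-n}\big)$. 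Summing the geometric factor $2^{-k}$ over $k$ and then over the $\asymp2^{\beta(n-1)}$ values of $l$ contributes $O(2^{(2\beta-1)n})$, whereas the constant factor summed over both $l$ and $k$ contributes $O\big((2^{\beta(n-1)})^2(2^{\beta(n-1)}2^{-n})^2\big)=O(2^{2(2\beta-1)n})$, as required.

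\textbf{The main obstacle} is the off--diagonal conditional estimate. Unlike the case $\beta\ge1$, the windows $[2(2^{l+1}a)^2,(2^{l+1}a)^4]$ for distinct indices overlap heavily, so the two collision events need not be time--ordered; the ``direct versus via--root'' dichotomy must therefore be set up spatially, along the portions of the two trajectories lying between the $J_{n,l}$-- and $J_{n,l+k}$--collisions, rather than temporally. The delicate points are to check that the passage bound \eqref{gambler} still applies verbatim and that the return--to--root alternative can legitimately be dominated by a fresh copy of $P_o(Z_{n,l+k}>0)$ despite this overlap. By contrast the matching upper bound on $E_o(Z_{n,l})$ is routine once \eqref{e:ptxx} is in hand, being a mirror image of the already established lower bound.
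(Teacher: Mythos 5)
Your proposal follows essentially the same route as the paper: the same diagonal/off-diagonal decomposition of $E_o(Z_n^2)$, the same single-interval bound $P_o(Z_{n,l}>0)\le c\,2^{(\beta-1)n}$ obtained by dividing a heat-kernel upper bound on the first moment by a resistance lower bound on the conditional expectation, and the same off-diagonal split into a gambler's-ruin term $c/2^k$ (via \eqref{gambler}) plus a restart-at-the-root term of order $2^{(\beta-1)n}$, summed exactly as you indicate. The one point where the paper is more careful than you are is in the ratio itself: the collisions guaranteed before a walk exits $A_{n,l}$ need not lie in $J_{n,l}$, so the paper replaces $Z_{n,l}$ by $\tilde{Z}_{n,l}$ (collisions counted in the enlarged set $A_{n,l}$) and uses $P_o(Z_{n,l}>0)\le E_o(\tilde{Z}_{n,l})/E_o(\tilde{Z}_{n,l}\mid Z_{n,l}>0)$, which is what makes your resistance lower bound on the denominator legitimate.
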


\proof
For the second moment we have that
\begin{align}
\label{secondmoment}
E_o(Z_n^2) \leq 2\sum_{l=1}^{\alpha 2^{\beta(n-1)} - 1} P_o(Z_{n,l}>0) + \sum_{l=1}^{\alpha 2^{\beta(n-1)} - 1}
\sum_{k=2}^{\alpha 2^{\beta(n-1)} - 1 - l} P_o(Z_{n,l}>0,Z_{n,l+k}>0).
\end{align}
We let $A_{n,l} = J_{n,l-1} \cup J_{n,l} \cup J_{n,l+1}$, for $l=1,\cdots,\alpha 2^{\beta(n-1)} -1$ and for $l=0$ we define
$A_{n,0} = J_{n-1,\alpha 2^{\beta(n-2)}} \cup J_{n,0} \cup J_{n,1}$ and for $l=\alpha 2^{\beta(n-1)}$ we let
$A_{n,l} = J_{n,l-1} \cup J_{n,l} \cup J_{n+1,0}$. We now define
$\tilde{Z}_{n,l}= \sum_{t=2(2^{l+1}a)^2}^{(2^{l+1}a)^4} \1(X_t = Y_t \in A_{n,l})$ and we have that
\begin{align}
\label{tilde2}
P_o(Z_{n,l}>0) \leq \frac{E_o(\tilde{Z}_{n,l})}{E_o(\tilde{Z}_{n,l}|Z_{n,l}>0)}.
\end{align}
Using the upper bounds for the transition probabilities we get that
\begin{align*}
E_o(\tilde{Z}_{n,l}) \leq c \frac{|I_{n,l}| \log{|I_{n,l}|}}{2^n}
\end{align*}
and for the conditional expectation we get a lower bound given by the resistance estimate, i.e.
$E_o(\tilde{Z}_{n,l}|Z_{n,l}>0) \geq c'|I_{n,l}|$, hence
\begin{align}
\label{tilde1}
P_o(Z_{n,l}>0) \leq \frac{c}{2^{(1-\beta)n}}
\end{align}
and thus the first sum on the right hand side of \eqref{secondmoment} is upper bounded by $c2^{(2\beta-1)n}$.

For the terms appearing in the second sum on the right hand side of \eqref{secondmoment}
we have $P_o(Z_{n,l}>0,Z_{n,l+k}>0) = P_o(Z_{n,l+k}>0|Z_{n,l}>0) P_o(Z_{n,l}>0)$ and
\begin{align*}
P_o(Z_{n,l+k} &>0|Z_{n,l}>0)\\
&= P_o(Z_{n,l+k}>0, \text{ at least 1 of the RWs hits $J_{n,l+k}$ before $o$ }|Z_{n,l}>0)\\
 &\qquad + P_o(Z_{n,l+k}>0, \text{ both hit $o$ before $J_{n,l+k}$ }|Z_{n,l}>0).
\end{align*}
The first term is bounded by the sum appearing in \eqref{gambler} and hence from the gambler's ruin probability this is upper bounded by $\frac{c}{2^k}$.

And for the second term
\begin{align*}
P_o(Z_{n,l+k}>0, \text{ both hit $o$ before $J_{n,l+k}$ }|Z_{n,l}>0)  &\leq \max_y P_{(o,y)}(Z_{n,l+k}>0) \\
& \leq \max_y \frac{E_{(o,y)}(\tilde{Z}_{n,l+k})}{E_{(o,y)}(\tilde{Z}_{n,l+k}|Z_{n,l+k}>0)}.
\end{align*}
The numerator can be bounded in the same way as we did in \eqref{collisionbd} and the denominator is lower bounded by the effective resistance. So now we get that
\begin{align*}
P_o(Z_{n,l+k}>0, \text{ both hit $o$ before $J_{n,l+k}$ }|Z_{n,l}>0) \leq c 2^{(\beta - 1)n}.
\end{align*}
Hence putting all things together we get
\begin{align*}
P_o(Z_{n,l}>0,Z_{n,l+k}>0) \leq \frac{c}{2^{(1-\beta)n}} \left( \frac{1}{2^k} + \frac{1}{2^{(1-\beta)n}} \right).
\end{align*}

Hence $E_o(Z_n^2) \leq c' 2^{2(2\beta-1)n}$, since $\beta>\frac{1}{2}$.
Thus we have shown that $P_o(Z_n>0) \geq c >0$. Hence by Corollary \ref{cor:fsets}
we obtain $P(Z=\infty) =1$, which completes the proof of Theorem \ref{trees}(c) for $\frac{1}{2} \leq \beta \leq 1$.
\end{proof}

\begin{proof}[\textsl{Proof of Theorem \ref{trees}(d)}]
Let $Z_{n,l}$ count the total number of collisions that happen on the set $J_{n,l}$ and let $\tilde{Z}_{n,l}$ be as in the proof of Claim \ref{notation}, but with the only modification that the time ranges over all $t \in \Z_+$.
We then have
\begin{align*}
P_o(Z_{n,l}>0) \leq \frac{E_o(\tilde{Z}_{n,l})}{E_o(\tilde{Z}_{n,l}|Z_{n,l}>0)}.
\end{align*}
For times $t$ greater than $2(2^{l+1}a)^2$ we get that the expected number of collisions is bounded from above by
$c\frac{|I_{n,l}| \log{|I_{n,l}|}}{2^n}$, which follows  by using the upper bounds for the transition probabilities in $t$ steps.
Here though we are counting the total number of collisions, so we need a better upper bound for the transition probability
for times $t\leq 2(2^{l+1}a)^2$. We are again going to look at the birth and death chain and find the number of collisions
and then divide through by $2^n$.

Let $x$ and $y$ be two points on $\Z_+$ which are at even distance apart and such that $y\leq x$.
Suppose that we start two birth and death chains $X$ from $x$ and $Y$ from $y$ and we couple them in such
a way that $X_t\geq Y_t$ for all $t$ before the first time that they meet and after that time $X_t=Y_t$. From this coupling it follows immediately that
\begin{align*}
p_t^{\text{BD}}(x,0) \leq p_t^{\text{BD}}(y,0).
\end{align*}
If there is no branch point between $x$ and $y$, then we get the same inequality, i.e.
$p_t^{\text{BD}}(0,x) \leq p_t^{\text{BD}}(0,y)$.
If there is one branch point between them, then we get
$p_t^{\text{BD}}(0,x) \leq 2 p_t^{\text{BD}}(0,y)$.

For any $x \in(2^{l}a,2^{l+1}a)$  we have that
\begin{align*}
p_t^{\text{BD}}(0,x) \leq 2 p_t^{\text{BD}}(0,y), \text{ for all } y \in (2^{l-1}a,2^{l}a),
\end{align*}
so $p_t^{\text{BD}}(0,x) \leq \frac{c}{2^{l}a}$, hence
\begin{align*}
E_o(\tilde{Z}_{n,l}) \leq c\frac{|I_{n,l}| \log{|I_{n,l}|}}{2^n} + \frac{1}{2^n}\sum_{t=1}^{2(2^{l+1}a)^2} \frac{1}{(2^{l}a)^2} \leq
c'\frac{|I_{n,l}| \log{|I_{n,l}|}}{2^n}
\end{align*}

Using resistances we get that $E_o(\tilde{Z}_{n,l}|Z_{n,l}>0) \geq c |I_{n,l}|$,
so
\begin{align*}
P_o(Z_{n,l}>0) \leq \frac{c}{2^{(1-\beta)n}}.
\end{align*}
Summing this over all $l=1,\cdots,2^{\beta(n-1)}$ and over all $n$ we get a finite sum, since $\beta < \frac{1}{2}$,
hence by Borel-Cantelli 1 we get that only finitely many of these events occur, so there are only finitely many collisions.
\end{proof}

\section{Concluding Remarks and Questions} \label{sec:open}


\begin{enumerate}

\item
In this paper we have dealt only with collisions of two independent random walks. A natural question to ask is what happens if we have more than two. An easy calculation shows that in $\Z$ the expected number of collisions of three independent random walks is infinite. In fact,
\begin{align*}
E(Z) = E\left(\sum_{t=0}^{\infty} \1(X_t=Y_t=W_t) \right) \ge E\left(\sum_{t=0}^{\infty}\sum_{x: |x|\le \sqrt{t}} \1(X_t=Y_t=W_t=x) \right)\\
 \asymp \sum_{t=0}^{\infty} \sum_{x:|x| \le \sqrt{t}} \frac{1}{(\sqrt{t})^3} = \infty.
\end{align*}
Since $\Z$ is a transitive graph, the number of collisions of the three random walks follows a Geometric distribution. Since the expectation of this number is infinite, it follows that there is an infinite number of collisions with probability 1.

In Comb($\Z, \alpha$) for all $\alpha$, the bounds in Lemma \ref{lem:qtbound} for the transition probabilities imply that the expected number of collisions of three independent random walks is finite.

\item
An application of the infinite collision property of the percolation cluster in $\Z^2$ to a problem in particle systems  is given in \cite{BLZ}.

\item
We have proved that the incipient infinite cluster in high dimensions has the infinite collision property. For the incipient infinite cluster in two dimensions though, the question from \cite{KP} still remains open.

\item
In \cite{Ori} it is proved that the edges crossed by a random walk in a transient network $G$
form a recurrent graph a.s. For which $G$ does the resulting graph have the infinite collision property? This question was asked by Nathana\"el Berestycki.

\item
Let Comb($\Z^2, f$) be a comb with variable lengths over $\Z^2$ defined analogously to Comb($\Z, f$), Definition \ref{defcomb}. For which $f$ does Comb($\Z^2, f$) have the finite collision property? The Green kernel criterion implies that if $f$ has logarithmic growth, then this graph has the infinite collision property.

\item
Suppose that $\{f(n)\}_{n \in \Z}$ are i.i.d. random variables with law $\mu$ supported on $(1,\infty)$. For which $\mu$ does Comb($\Z, f$) have the infinite collision property? This question was raised in \cite{Chen}.
If $\mu$ has finite mean, then $f(n) = o(n)$, so the infinite collision property follows from the Green kernel criterion, Theorem \ref{greenkernel}.


\item
Let $G$ be a graph and let $G'$ be a graph obtained by adding a finite number of vertices and edges. Do $G$ and $G'$ have the same collision property? This question was asked by Zhen-Qing Chen.

\end{enumerate}

\noindent{\Large\bf{Acknowledgements}}

This work was started at the Probability Summer School at Cornell, July 2009, and continued at the workshop on New random geometries in Bath, England. The third author thanks Microsoft Research and the University of Washington, where this work was completed, for their hospitality.


\end{document}